\let\accentvec\vec
\journalname{Journal}
\let\vec\accentvec
\numberwithin{theorem}{section}
\numberwithin{lemma}{section}
\numberwithin{proposition}{section}
\numberwithin{equation}{section}
\numberwithin{remark}{section}
\DeclareMathOperator*{\argmin}{arg\,min}
\DeclareMathOperator*{\dist}{dist}
\DeclareMathOperator*{\ri}{ri}
\DeclareMathOperator*{\diag}{Diag}
\DeclareMathOperator*{\dom}{dom}
\def\prox{{\rm Prox}}
\def\A{{\mathcal A}}
\def\B{{\mathcal B}}
\def\D{{\mathcal D}}
\def\E{{\mathcal E}}
\def\F{{\mathcal F}}
\def\G{{\mathcal G}}
\def\H{{\mathcal H}}
\def\I{{\mathcal I}}
\def\L{{\mathcal L}}
\def\M{{\mathcal M}}
\def\N{{\mathcal N}}
\def\O{{\mathcal O}}
\def\P{{\mathcal P}}
\def\Q{{\mathcal Q}}
\def\R{{\mathcal R}}
\def\S{{\mathcal S}}
\def\T{{\mathcal T}}
\def\U{{\mathcal U}}
\def\V{{\mathcal V}}
\def\W{{\mathcal W}}
\def\X{{\mathcal X}}
\def\Y{{\mathcal Y}}
\def\Z{{\mathcal Z}}
\def\hat{\widehat}
\def\tilde{\widetilde}
\def\[{\begin{equation}}
\def\]{\end{equation}}
\def\norm#1{\|#1\|}
\def\inprod#1#2{\big\langle#1,\,#2\big\rangle}
\def\disp{\displaystyle}
\def\mc{\multicolumn}
\def\norm#1{\|#1 \|}
\def\inprod#1#2{\langle#1,\,#2 \rangle}
\def\cA{{\cal A}} \def\cB{{\cal B}}  \def\cK{{\cal K}}
 \def\cD{{\cal D}}  
\def\cH{{\cal H}}     
\def\cQ{{\cal Q}}  \def\cN{{\cal N}} \def\cW{{\cal W}}
\def\cS{{\cal S}}     
 \def\cX{{\cal X}} \def\cY{{\cal Y}} \def\cZ{{\cal Z}}
 \def\cN{{\cal N}} \def\cL{{\cal L}} \def\Sn{{\cal S}^n}
\def\diag{{\rm Diag}}
\def\Range{\textup{Range}}
\def\dist{\textup{dist}}
\begin{document}
\title{\bf \Large An Efficient Inexact Symmetric Gauss-Seidel Based Majorized ADMM for High-Dimensional Convex Composite Conic Programming
\thanks{The research of the first author was supported by the China Scholarship Council while visiting the National University of Singapore and the National Natural Science Foundation of China (Grant No. 11271117).
The research of the second and the third authors was supported in part by the Ministry of Education, Singapore, Academic Research Fund (Grant No. R-146-000-194-112).}}
\titlerunning{Inexact sGS based Majorized ADMM}
\author{Liang Chen \and Defeng Sun \and Kim-Chuan Toh}
\institute{Liang Chen
\at College of Mathematics and Econometrics, Hunan University, Changsha, 410082, China.\\
\email{chl@hnu.edu.cn}
\and
Defeng Sun
\at Department of Mathematics and Risk Management Institute, National University of Singapore, 10 Lower Kent Ridge Road, Singapore.\\
\email{matsundf@nus.edu.sg}
\and
Kim-Chuan Toh, Corresponding author
\at Department of Mathematics, National University of Singapore, 10 Lower Kent Ridge Road, Singapore.\\
\email{mattohkc@nus.edu.sg}
}
\date{\large May 30 2015, revised on March 17,2016}
\maketitle

\begin{abstract}
In this paper, we propose an {\em inexact} multi-block ADMM-type first-order
method  for solving a class of high-dimensional convex composite conic optimization problems to moderate accuracy. The design of this method combines an inexact 2-block majorized semi-proximal ADMM and the recent advances in the  {inexact} symmetric Gauss-Seidel (sGS) technique
 for solving a multi-block convex composite quadratic programming whose objective
contains a nonsmooth term involving only the first
block-variable.
One distinctive feature of our proposed method (the sGS-imsPADMM) is that it only needs one cycle of an inexact sGS method, instead of an unknown number of
cycles,
to solve each of the subproblems involved.
With some simple and implementable error tolerance criteria, the cost for solving the subproblems can be
 greatly  reduced,
and many steps in the forward sweep of each sGS cycle can often be
skipped,
which further contributes to the efficiency of the proposed method.
Global convergence as well as the iteration complexity in {the non-ergodic sense is}
established.
Preliminary numerical experiments on some high-dimensional linear and convex quadratic SDP problems with a large number of
 linear equality and inequality constraints
are also provided.  The results show that for the vast majority of the tested problems,  the sGS-imsPADMM is 2 to 3 times faster than the directly extended multi-block ADMM with  the aggressive step-length of  $1.618$, which
is currently the benchmark
among first-order methods for solving multi-block linear and quadratic SDP problems though its convergence is not guaranteed.
\end{abstract}
\keywords{Convex Conic Programming \and Convex Quadratic Semidefinite Programming \and Symmetric Gauss-Seidel \and Alternating Direction Method of Multipliers \and Majorization}
\subclass{90C25 \and 90C22 \and 90C06 \and 65K05}

\section{Introduction}
The objective of this paper is to design an efficient first-order method for solving the following high-dimensional convex composite quadratic conic programming problem to moderate accuracy:
\begin{equation}
\label{eq:cqcp}
   \begin{array}{c}
    \min \big\{ \theta(x) + \frac{1}{2}\inprod{x}{\cQ x} + \inprod{c}{x}
    \mid  \cA x - b = 0, \, x\in\cK \big\},
    \end{array}
 \end{equation}
where $\theta: \mathcal{X} \to (-\infty,+\infty]$ is a closed proper convex function,
$\cQ:\X\to\X$ is a self-adjoint positive semidefinite linear operator,
$\cA:\cX\to\cY$ is a linear map,
$c\in\cX$ and $b\in \cY$ are given data, $\cK \subseteq\cX$ is a closed convex cone,
$\mathcal{X}$ and $\mathcal{Y}$ are two real finite dimensional Euclidean spaces each equipped with an inner product
$\langle \cdot, \cdot \rangle$ and its induced norm $\|\cdot\|$.
Here, the phrase ``high-dimension'' means that the linear operators $\A\A^{*}$ and/or $\Q$ are too large to be stored explicitly or to be factorized by the Cholesky decomposition.
By introducing a slack variable $u\in \cX$, one can equivalently recast (\ref{eq:cqcp}) as
\begin{equation}
\label{eq:cqcpp}
\begin{array}{c}
    \min \, \big\{ \theta(u) + \frac{1}{2}\inprod{x}{\cQ x} + \inprod{c}{x}
    \mid
     \cA x - b =0,\, u-x = 0, \, x\in\cK \big\}.
\end{array}
\end{equation}
Then, solving the dual of problem \eqref{eq:cqcpp} is equivalent to solving
\begin{equation}
\label{eq:cqcp_dual}
\begin{array}{c}
\min\big\{\theta^*(-s)+\frac{1}{2}\inprod{w}{\cQ w}-\inprod{b}{\xi}
\mid
s + z - \cQ w + \cA^*\xi = c,
\, z\in\cK^*,\, w\in\cW\big\},
\end{array}
\end{equation}
where $\cW \subseteq \cX$ is any subspace  containing $\Range(\cQ)$,
$\cK^*$ is the dual cone of $\cK$ {defined by
$\cK^*:= \{d\in\cX\;|\; \inprod{d}{x}\ge 0 \; \forall\, x\in \cK\}$},  $\theta^*$  is the Fenchel conjugate of the convex function $\theta$.
Particular examples of \eqref{eq:cqcp} include convex quadratic semidefinite programming (QSDP), convex quadratic programming (QP),  nuclear semi-norm penalized least squares
and robust PCA (principal component analysis) problems. One may refer to \cite{lithesis,li14} and references therein for a brief introduction on these examples.

Let $m$ and $n$ be given nonnegative integers, $\Z$,  $\X_i$, $1\le i\le m$ and $\Y_j$, $1\le j\le n$, be finite dimensional real Euclidean spaces each endowed with an inner product $\langle\cdot,\cdot\rangle$ and its induced norm $\|\cdot\|$. Define $\X:=\X_1\times\ldots\times\X_m$ and $\Y:=\Y_1\times\ldots\times\Y_n$.
Problem \eqref{eq:cqcp_dual} falls within the following general convex composite  programming model:
\[
\label{problem:primal}
\min_{x\in\X,y\in\Y} \big\{
 p_{1}(x_1)+f(x_1,\ldots,x_{m})+q_{1}(y_1)+g(y_{1},\ldots,y_{n})\,
|\, \A^* x+\B^* y=c
\big\},
\]
where $p_{1}:\X_1\to(-\infty,\infty]$ and $q_{1}:\Y_{1}\to(-\infty,\infty]$ are two closed proper convex functions, $f:\X\to(-\infty,\infty)$ and $g:\Y\to(-\infty,\infty)$ are continuously differentiable convex functions whose gradients are Lipschitz continuous.
The linear mappings $\cA:\cX \rightarrow \cY$ and  $\cB:\cX\rightarrow \cZ$ are defined
such that their adjoints are given by  $\cA^*x = \mbox{$\sum_{i=1}^m$} \cA_i^*x_i$
 for  $x=(x_1,\ldots,x_m) \in \X$,
 and   $\cB^*y = \mbox{$\sum_{j=1}^n$} \cB_j^*y_j$ for
$y=(y_1,\ldots,y_n)\in \Y$,
where
$\A_{i}^*:\X_{i}\to\Z,i=1,\dots,m$ and $\B_{j}^*:\Y_{j}\to\Z,j=1,\dots,n$ are the adjoints of the linear maps $\A_{i}:\Z\to\X_{i}$ and $\B_{j}:\Z\to\Y_{i}$ respectively.
For notational convenience, in the subsequent discussions we define the
functions $p:\X\to(-\infty,\infty]$ and $q:\Y\to(-\infty,\infty]$ by $p(x):=p_{1}(x_{1})$ and $q(y):=q_{1}(y_{1})$.
For problem \eqref{eq:cqcp_dual}, one can express it in the generic form
\eqref{problem:primal} by setting
$$
\begin{array}{c}
 p_1(s) = \theta^*(-s), \ \   f(s,w) = \frac{1}{2} \inprod{w}{\cQ w},\ \  q_1(z) = \delta_{\cK^*}(z),\\[3pt]
g(z,\xi) = -\inprod{y}{\xi},\ \ \cA^*(s,w) = s - \cQ w \  \ \mbox{and}\  \ \cB^*(z,\xi) = z + \cA^*\xi.
\end{array}
$$
There are various numerical methods available for solving  problem \eqref{problem:primal}. Among them, perhaps the first choice is
the   augmented Lagrangian method (ALM)  pioneered  by Hestenes \cite{hestenes69}, Powell \cite{powell} and  Rockafellar \cite{rockafellar},
 if one
 does not attempt to exploit  the composite structure  in  \eqref{problem:primal} to gain computational efficiency.
Let $\sigma>0$ be a given penalty parameter.  The augmented Lagrangian function of problem \eqref{problem:primal} is defined as follows: for  any $(x,y,z)\in\X\times\Y\times\Z$,
$$
\L_{\sigma}(x,y;z)
:=p(x)+f(x)+q(y)+g(y)
+\langle z,\A^*x
+\B^*y-c\rangle+\frac{\sigma}{2}\|\A^*x+\B^*y-c\|^2.
$$
Given an initial point  $z^0 \in \cZ$, the ALM consists of the following iterations:
\begin{eqnarray}
 (x^{k+1},  y^{k+1}) &:= & \textup{argmin}
\;\cL_\sigma(x,y;z^k),
\label{eq-auglagl-w}
\\[5pt]
\nonumber
 z^{k+1} &:=&\displaystyle z^k +\tau \sigma  (  \A^* x^{k+1}+\B^* y^{k+1}-c ),
\end{eqnarray}
where $ \tau\in(0,2)$ is the step-length. A key attractive property of the ALM and  its inexact variants, including the inexact proximal point algorithm (PPA) obeying summable error tolerance criteria proposed by Rockafellar \cite{rockafellar,rockafellar76} and the inexact PPAs proposed by  Solodov and Svaiter \cite{solodov99,solodov992,solodov00} using relative error criteria,
 is their   fast local linear convergence property when  the penalty parameter exceeds a certain threshold.
However, it is generally difficult and expensive to solve the inner subproblems in these methods exactly or to high accuracy, especially in high-dimensional settings, due to the coupled quadratic  term interacting with two nonsmooth functions in the augmented Lagrangian function.
By exploiting the composite structure of \eqref{problem:primal}, one may use the block coordinate descent (BCD) method to solve the subproblems in (\ref{eq-auglagl-w}) inexactly.  However, it can also be expensive to adopt such a strategy as the number of BCD-type cycles needed to solve each subproblem to the required accuracy
can be large. In addition, it is also  {computationally not economical} to use the ALM during the early stage of solving problem \eqref{problem:primal} when the fast local linear convergence of ALM has not kicked in.

A natural alternative to the ALM, for solving linearly constrained $2$-block convex optimization problems such as
\eqref{problem:primal}, is the alternating direction method of multipliers (ADMM) \cite{fortin83,Gabay:1976ff,glo80book,glo75}, which solves $x$ and $y$ alternatively in a Gauss-Seidel fashion  {(one may refer to \cite{eck12} for a recent tutorial on the ADMM)}.
Computationally, such a strategy can be beneficial because solving $x$ or $y$ by fixing the other variable in (\ref{eq-auglagl-w}) is potentially easier than solving $x$ and $y$ simultaneously.
Just as in the case for the ALM and PPAs, one may also have to solve the ADMM subproblems approximately. Indeed, for this purpose,
Eckstein and Bertsekas  \cite{eckstein1992} proposed the first  inexact version of the ADMM based on the PPA theory  and    Eckstein  \cite{eckstein1994} introduced a proximal ADMM (PADMM) to make the subproblems easier to solve. The inexact version of Eckstein's PADMM and its extensions   can be found in \cite{he02,li-inexact,ng11}, to name  a few.
These ADMM-type methods are very competitive for solving certain 2-block separable problems and they have been used frequently to generate a good initial point to warm-start the ALM.
However, for many other cases such as the high-dimensional multi-block convex composite conic programming problem (\ref{eq:cqcp}) and its dual (\ref{eq:cqcp_dual}), it can be very expensive to solve the ADMM subproblems (each is a composite problem with smooth and nonsmooth terms in two or more blocks of variables)
to high accuracy. Also, by using BCD-type methods to solve these subproblems, one may still face  the same drawback as in solving the ALM subproblems
by requiring an unknown number of BCD inner iteration cycles.
One  strategy which may be adopted to alleviate the computational burden in solving the subproblems is to
divide the variables in \eqref{problem:primal} into three or more blocks (depending on its composite structure),
and to solve the resulting problems by a multi-block ADMM-type method (by directly extending the 2-block ADMM or PADMM to the multi-block setting).
However, such a directly extended method may be non-convergent as was shown in \cite{chen14}, even if the functions $f$ and $g$ are separable with respect to these blocks of variables, despite ample numerical results showing that it is often practically efficient and effective \cite{sun13}. Thus, different strategies are called for to deal with the numerical difficulty just mentioned.

Our primary objective  in this paper is to construct a multi-block ADMM-type method for solving
high-dimensional multi-block convex composite optimization problems to  medium accuracy
with the essential flexibility that the inner subproblems are allowed to be solved only approximately.
We should emphasize that the flexibility is essential because
this gives us the freedom
to solve large scale linear systems of equations (which typically
arise in high-dimensional problems)
approximately by an iterative solver such as the conjugate gradient method.
Without such a flexibility, one would be forced
to modify the corresponding subproblem by adding an appropriately chosen
``large" semi-proximal term so as to get a closed-form solution for the
modified subproblem. But such a modification can sometimes significantly
slow down the outer iteration as we shall see later in our numerical experiments.

In this paper, we achieve our goal by proposing an inexact
symmetric Gauss-Seidel (sGS) based  majorized
semi-proximal ADMM (we name it as sGS-imsPADMM for easy reference)
for solving \eqref{problem:primal}, for which each of its subproblems only needs one cycle of an inexact sGS iteration instead of an unknown number of cycles.
Our method is motivated by the works of \cite{limajorize} and  \cite{lisgs}
in that
it is developed via a novel integration of  the majorization technique used in \cite{limajorize}
with the inexact symmetric Gauss-Seidel iteration technique proposed in \cite{lisgs} for solving a  convex minimization problem
whose objective is the sum of a multi-block quadratic function and a non-smooth function
involving only the first block. However, non-trivially, we also design checkable inexact minimization criteria for solving the sPADMM subproblems
while still being able to establish the convergence of the inexact method.
Our convergence analysis relies on the key observation that the
results in \cite{sun13} and \cite{lisgs} are obtained via establishing
the equivalence  of their proposed algorithms
to particular cases of the $2$-block sPADMM
in \cite{fazel13} with some specially constructed semi-proximal terms.
Owing to the inexact minimization criteria, the cost for solving the subproblems
in our proposed algorithm can greatly be reduced.
For example, one can now solve a very large linear system of equations via an iterative solver
to an appropriate accuracy instead  of a very high accuracy as required by a method
with no inexactness flexibility.

Moreover,  by using the majorization technique, the two smooth functions $f$ and $g$ in \eqref{problem:primal} are  allowed to be non-quadratic. Thus, the proposed method is capable of  dealing with even more problems beyond the scope of convex quadratic conic programming. The success of our proposed inexact sGS-based ADMM-type method would thus also meet the pressing demand for an efficient algorithm to find a good initial point
to warm-start the augmented Lagrangian method so as to quickly enjoy its fast local  linear convergence.

To summarize, the main contribution of this paper is that by taking advantage of the inexact sGS technique in  \cite{lisgs}, we develop a simple, implementable and efficient inexact   first-order algorithm, the sGS-imsPADMM, for solving high-dimensional multi-block convex composite
conic optimization problems to moderate accuracy. We have also
established the global convergence as well as the non-ergodic iteration complexity
of our proposed method.
Preliminary numerical experiments on the class of high-dimensional linear and convex quadratic SDP problems with a large number of linear
equality and inequality constraints
are also provided.
The results show that on the average,  the sGS-imsPADMM {is 2 to 3 times faster} than the directly extended multi-block ADMM even with
the aggressive step-length of 1.618, which is currently the  benchmark among first-order methods
for solving multi-block linear and quadratic SDPs though its convergence is not guaranteed.

The remaining parts of this paper are organized as follows. In Sec. \ref{preliminary}, we present some preliminary results from convex analysis.
In Sec. \ref{imspadmm}, we propose an inexact two-block majorized sPADMM (imsPADMM), which lays the foundation for later {algorithmic} developments.
In Sec. \ref{sgs-imspadmm}, we give a quick review of the inexact   sGS technique developed
in \cite{lisgs} and propose our  sGS-imsPADMM algorithm
for the multi-block composite optimization problem (\ref{problem:primal}), which constitutes as the main result of this paper. Moreover, we establish the relationship between the sGS-imsPADMM and the imsPADMM to substantially simplify the convergence analysis.
In Sec. \ref{convergence},  we establish the global convergence of the imsPADMM. Hence, the convergence of  the sGS-imsPADMM is also derived.
In Sec. \ref{convergence-rate}, we  study the non-ergodic iteration complexity of the proposed algorithm.
In Sec. \ref{numerical}, we present our numerical results, as well as some efficient computational techniques designed in our implementation. We conclude the  paper in Sec. \ref{conclusion}.

\section{Preliminaries}
\label{preliminary}
Let $\U$ and $\V$ be two arbitrary finite dimensional real Euclidean spaces each endowed with an inner product {$\langle\cdot,\cdot\rangle$  and its induced norm  $\|\cdot\|$}. For any linear map $\O:U\to\V$, we use $\O^*$ to denote its adjoint and $\|\O\|$ to denote its induced norm.
For a self-adjoint positive semidefinite linear operator $\H:\U\to\U$,   there exists a unique self-adjoint positive semidefinite linear operator, denoted as $\H^{\frac{1}{2}}$, such that $\H^{\frac{1}{2}}\H^{\frac{1}{2}}=\H$.
For any $u,v\in\U$,   define $\langle u,v\rangle_{\H}:=\langle u,\H v\rangle$ and  $\|u\|_\H:=\sqrt{\langle u, \H u\rangle}=\|\H^{\frac{1}{2}}u\|$. Moreover, for any set $U\subseteq \U$, we define $\dist(u,U):=\inf_{u'\in U}\|u-u'\|$ and denote the relative interior of $U$ by $\ri (U)$.
For any $u,u',v,v'\in\U$, we have
\[
\label{eq:triangle}
\begin{array}{l}
\langle u,v\rangle_\H=\frac{1}{2}\left(\|u\|_\H^2+\|v\|_\H^2-\|u-v\|_\H^2\right)=\frac{1}{2}\left(\|u+v\|_\H^2-\|u\|_\H^2-\|v\|_\H^2\right).
\end{array}
\]
Let $\theta:\U\to(-\infty,+\infty]$ be an arbitrary closed proper convex function.
We use $\dom\theta$ to denote its effective domain and $\partial\theta$ to denote its subdifferential mapping.
The proximal mapping of $\theta$ associated with $\H\succ 0$ is defined by
$\prox_\H^\theta(u):=\argmin_{v\in\U}\big\{\theta(v)+\frac{1}{2}\|v-u\|_\H^2\big\}$, $\forall u\in\U$.
It holds \cite{lemarechal} that
\[
\label{ineq:proximalmapping}
\begin{array}{c}
\big\|\prox_\H^\theta(v)-\prox_\H^\theta(v')\big\|_\H^2
\le
\big\langle v-v',\prox_\H^\theta(v)-\prox_\H^\theta(v')\big\rangle_\H.
\end{array}
\]

We say that the Slater  constraint qualification (CQ) holds for problem \eqref{problem:primal} if it holds that
$$
\bigl\{(x,y)\ |\ x\in\ri(\dom p),\ y\in\ri(\dom q),\ \A^{*}x+\B^{*}y=c\bigr\}\ne \emptyset.
$$
When the Slater  CQ holds, we know from \cite[Corollaries 28.2.2 \& 28.3.1]{rocbook} that $(\bar x, \bar y)$ is a solution to \eqref{problem:primal} if and only if there is a Lagrangian multiplier $\bar z\in\mathcal Z$ such that
$(\bar x, \bar y, \bar z)$ is a solution to the following Karush-Kuhn-Tucker (KKT) system
\[
\label{eq:kkt}
0\in\partial p(x)+\nabla f(x)+\A z,
\quad
0\in\partial q(y)+\nabla g(y)+\B z,
\quad
\A^* x+\B^* y=c.
\]
If $(\bar x,\bar y,\bar z)\in\X\times\Y\times\Z$ satisfies \eqref{eq:kkt}, from \cite[Corollary 30.5.1]{rocbook} we know that $(\bar x, \bar y)$ is an optimal solution to problem  \eqref{problem:primal} and  $\bar z$ is an optimal solution to the dual of this problem.
To simplify the notation, we denote $w:=(x,y,z)$ and $\W:=\X\times\Y\times\Z$. The solution set of the KKT system \eqref{eq:kkt} for problem \eqref{problem:primal} is denoted by $\overline\W$.

\section{An Inexact Majorized sPADMM}
\label{imspadmm}
Since the two convex functions $f$ and $g$ in problem \eqref{problem:primal} are assumed to be continuously differentiable with Lipschitz continuous gradients, there exist two self-adjoint positive semidefinite linear operators $\hat\Sigma_{f}:\X\to\X$ and $\hat\Sigma_{g}:\Y\to\Y$ such that for $x,x'\in\X$ and $y,y'\in\Y$,
\begin{equation}
\begin{array}{rl}
\label{ineq:majorize}
f(x) &\le \hat{f}(x;x^\prime):=
f({x}^\prime)+\langle \nabla f({x}^\prime),x-x^\prime\rangle
+\frac{1}{2}\|x-x^\prime\|_{\hat{\Sigma}_f}^2,
\\[1mm]
g(y)& \le \hat{g}(y;y^\prime):=
g({y}^\prime)+\langle\nabla g({y}^\prime),y-y^\prime\rangle
+\frac{1}{2}\|y-{y}^\prime\|_{\hat{\Sigma}_g}^2.
\end{array}
\end{equation}
Let $\sigma>0$. The majorized augmented  Lagrangian function of problem \eqref{problem:primal} is defined by, for any $(x^{\prime},y^{\prime})\in\X\times\Y$ and $(x,y,z)\in\X\times\Y\times\Z$,
$$
\begin{array}{rl}
\hat\L_\sigma(x,y;(z,x^{\prime},y^{\prime})):=& p(x)+\hat f(x;x^{\prime})+q(y)+\hat g(y;y^{\prime})
\\[1mm]
&+\langle z,\A^*x+\B^*y-c\rangle+\frac{\sigma}{2}\|\A^*x+\B^*y-c\|^2.
\end{array}
$$
Let $\S:\X\to\X$ and $\T:\Y\to\Y$ be two self-adjoint positive semidefinite linear operators such that
\[
\label{def:MN}
\M := \hat{\Sigma}_f+\S+\sigma\A\A^{*}\succ 0\quad  \mbox{and}\quad  \N := \hat\Sigma_{g}+\T+\sigma\B\B^{*}\succ 0.
\]
Suppose that $\{w^k :=(x^k,y^k,z^k)\}$ is a sequence in $\X\times\Y\times\Z$.
For convenience, we define the two functions $\psi_{k}:\X\to(-\infty,\infty]$ and $\varphi_{k}:\Y\to(-\infty,\infty]$ by
$$
\begin{array}{l}
\psi_k(x)   :=p(x) + \frac{1}{2}\inprod{x}{\M x}-\inprod{l^k_x}{x},\quad
\varphi_k(y):=q(y) + \frac{1}{2}\inprod{y}{\N y}-\inprod{l^k_y}{y},
\end{array}
$$
where
$$
\begin{array}{l}
-l^k_x := \nabla f({x}^k)+\A z^{k}-\M x^k
+\sigma\A(\A^*x^k+\B^*y^{k}-c),
\\[1mm]
-l_{y}^{k}:=
\nabla g({y}^k)+\B z^{k}- \N y^k+\sigma\B(\A^*x^{k+1}+ \B^* y^k -c).
\end{array}
$$
Now, we are ready  to present our
inexact majorized sPADMM for solving  problem \eqref{problem:primal} and some relevant results.

\medskip
\centerline{
\fbox{
\parbox{0.97\textwidth}{
{\bf Algorithm  imsPADMM}: An inexact  majorized semi-proximal ADMM  for solving problem \eqref{problem:primal}.
\\
Let $\tau\in(0,(1+\sqrt{5})/2)$ be the step-length and $\{\varepsilon_k\}_{k\geq 0}$ be a summable sequence of nonnegative numbers. Choose the linear operators $\S$ and $\T$ such that $\M\succ0$ and $\N\succ 0$ in \eqref{def:MN}. Let $w^0:=(x^0,y^0,z^0)\in\dom p\times\dom q\times\Z$ be the initial point.
For $k=0,1,\ldots$, perform the following steps:
\begin{description}
\item[\bf Step 1.] Compute $x^{k+1}$ and $d_x^k\in\partial \psi_k(x^{k+1})$ such that $\norm{\M^{-\frac{1}{2}}d^k_x} \leq \varepsilon_k$ and
\begin{equation}
 \label{barxplus}
 \begin{array}{ll}
x^{k+1} \approx  \bar{x}^{k+1} :&=
\argmin_{x\in\X}\big\{\hat{\L}_\sigma(x,y^k; w^k)+\frac{1}{2}\norm{x-x^k}_{\S}^2\big\}
\\[0.5mm]
&=\argmin_{x\in\X}\psi_k(x).
\end{array}
\end{equation}
\item[\bf Step 2.] Compute $y^{k+1}$ and $d_y^k\in\partial \varphi_k(y^{k+1})$ such that $\norm{\N^{-\frac{1}{2}}d^k_y} \leq \varepsilon_k$ and
\begin{equation}
\label{baryplus}
\begin{array}{rl}
y^{k+1} \approx \bar{y}^{k+1}:=
&\argmin_{y\in\Y}\big\{
\hat{\L}_\sigma(\bar{x}^{k+1},y; w^k)+\frac{1}{2}\norm{y-y^k}_{\T}^2\big\}
\\[1mm]
=&\argmin_{y\in\Y}\big\{ \varphi_k(y)
+ \inprod{\sigma\B\A^*(\bar{x}^{k+1}-x^{k+1})}{y}
\big\}.
\end{array}
\end{equation}
\item[\bf Step 3.] Compute $z^{k+1}:=z^k+\tau\sigma(\A^*x^{k+1}+\B^* y^{k+1}-c)$.
\end{description}}}}

\begin{proposition}
\label{prop:error}
Let $\{w^k\}$ be the sequence generated by the imsPADMM, and $\{\bar x^{k}\}$, $\{\bar y^{k}\}$ be the sequence defined by \eqref{barxplus} and \eqref{baryplus}.
Then, for any $k\ge 0$, we have $\norm{x^{k+1}-\bar x^{k+1}}_{\M}
\le\varepsilon_k$
and $\norm{y^{k+1}-\bar y^{k+1}}_{\N}
\le(1+\sigma\|\N^{-\frac{1}{2}}\B\A^{*}\M^{-\frac{1}{2}}\|)\varepsilon_k$, where $\M$ and $\N$ are defined in \eqref{def:MN}.
\end{proposition}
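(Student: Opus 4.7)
The plan is to exploit the strong convexity of the subproblem objectives $\psi_k$ and $\varphi_k$, which carry $\M$ and $\N$ respectively as their quadratic Hessians. Since $p$ and $q$ are convex, both $\partial\psi_k$ and $\partial\varphi_k$ are strongly monotone in the associated weighted norms: for any $u_1,u_2$ and $v_i\in\partial\psi_k(u_i)$ one has $\langle v_1-v_2,\,u_1-u_2\rangle\ge \|u_1-u_2\|_\M^2$, and similarly for $\varphi_k$ with $\N$. Combining this monotonicity with a weighted Cauchy--Schwarz inequality in the form $\langle v,u\rangle\le \|\M^{-\frac{1}{2}}v\|\cdot\|u\|_\M$ is essentially all the machinery needed to produce both bounds.

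For the first estimate, $\bar x^{k+1}=\argmin\psi_k$ satisfies $0\in\partial\psi_k(\bar x^{k+1})$, while $x^{k+1}$ satisfies $d_x^k\in\partial\psi_k(x^{k+1})$ with $\|\M^{-\frac{1}{2}}d_x^k\|\le\varepsilon_k$. Applying $\M$-strong monotonicity to the pair $(x^{k+1},\bar x^{k+1})$ yields $\|x^{k+1}-\bar x^{k+1}\|_\M^2 \le \langle d_x^k,\,x^{k+1}-\bar x^{k+1}\rangle\le\varepsilon_k\|x^{k+1}-\bar x^{k+1}\|_\M$, which gives the claimed bound on dividing through by $\|x^{k+1}-\bar x^{k+1}\|_\M$.

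The second estimate requires more care because $\bar y^{k+1}$ is defined from $\hat\L_\sigma$ using the \emph{exact} $\bar x^{k+1}$, whereas $\varphi_k$ (through $l_y^k$) uses the \emph{approximate} $x^{k+1}$. A short expansion of the two objectives shows that they differ only by the linear term $\langle \sigma\B\A^{*}(\bar x^{k+1}-x^{k+1}),y\rangle$, so the two subgradient inclusions read
$$
d_y^k-\N y^{k+1}+l_y^k\in\partial q(y^{k+1}),\qquad -\N\bar y^{k+1}+l_y^k-\sigma\B\A^{*}(\bar x^{k+1}-x^{k+1})\in\partial q(\bar y^{k+1}).
$$
Monotonicity of $\partial q$, with the $l_y^k$ terms cancelling, then yields
$$
\|y^{k+1}-\bar y^{k+1}\|_\N^2\le\langle d_y^k+\sigma\B\A^{*}(\bar x^{k+1}-x^{k+1}),\,y^{k+1}-\bar y^{k+1}\rangle.
$$
The right-hand side is split by Cauchy--Schwarz in two pieces: the first summand is controlled by the tolerance $\|\N^{-\frac{1}{2}}d_y^k\|\le\varepsilon_k$, and the cross term is handled by inserting $\M^{-\frac{1}{2}}\M^{\frac{1}{2}}$ so as to bring out the operator norm $\|\N^{-\frac{1}{2}}\B\A^{*}\M^{-\frac{1}{2}}\|$, after which the first estimate $\|\bar x^{k+1}-x^{k+1}\|_\M\le\varepsilon_k$ is applied. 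Dividing through by $\|y^{k+1}-\bar y^{k+1}\|_\N$ produces the factor $1+\sigma\|\N^{-\frac{1}{2}}\B\A^{*}\M^{-\frac{1}{2}}\|$ stated in the proposition.

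The only place where real attention is needed is this bookkeeping step in the $y$-subproblem: the discrepancy between the inexact $x^{k+1}$ used in $\varphi_k$ and the exact $\bar x^{k+1}$ used in defining $\bar y^{k+1}$ must be correctly propagated as the extra linear correction in the subdifferential inclusion. Once that correction is identified, both bounds reduce to a standard strong-monotonicity-plus-weighted-Cauchy--Schwarz argument, and no additional properties beyond $\M,\N\succ 0$ and convexity of $p,q$ are invoked.
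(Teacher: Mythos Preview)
Your proof is correct and essentially coincides with the paper's argument. The only cosmetic difference is that the paper recasts $x^{k+1}$ and $\bar x^{k+1}$ (and likewise $y^{k+1}$, $\bar y^{k+1}$) as proximal points $\prox_{\M}^{p}(\cdot)$ and invokes the firm nonexpansiveness inequality \eqref{ineq:proximalmapping}, whereas you use strong monotonicity of $\partial\psi_k$ and $\partial q$ directly; both routes land on the identical key inequality $\|x^{k+1}-\bar x^{k+1}\|_\M^2\le\langle d_x^k,\,x^{k+1}-\bar x^{k+1}\rangle$ (and its $y$-analogue with the $\sigma\B\A^*(\bar x^{k+1}-x^{k+1})$ correction) and finish with the same weighted Cauchy--Schwarz step.
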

\begin{proof}
Noting that $0\in\partial p(x^{k+1})+\M x^{k+1} - l_{x}^{k} - d_{x}^{k}$ and $\M \succ 0$, we can write
$x^{k+1}=\prox_{\M}^{p}\big(\M^{-1}(l_{x}^{k}+d_{x}^{k})\big).$
Also, we have
$\bar x^{k+1}=\prox_{\M}^{p}\big(\M^{-1}l_{x}^{k}\big)$.
By using \eqref{ineq:proximalmapping} we can get
$$
\|x^{k+1}-\bar x^{k+1}\|^{2}_{\M}
\;\le\;
\inprod{x^{k+1}-\bar x^{k+1}}{d_{x}^{k}}
=
\inprod{\M^{\frac{1}{2}}(x^{k+1}-\bar x^{k+1})}{\M^{-\frac{1}{2}}d_{x}^{k}}.
$$
Thus, by using the Cauchy-Schwarz inequality, we can readily obtain that $\norm{x^{k+1}-\bar x^{k+1}}_{\M}\le \|\M^{-\frac{1}{2}}d_{x}^{k}\|\le \varepsilon_k$.
Similarly, we can obtain that
$$
\begin{array}{lll}
\norm{y^{k+1}-\bar y^{k+1}}^{2}_{\N}
&\le&
\inprod{y^{k+1}-\bar y^{k+1}}{ d_{y}^{k}+\sigma\B\A^*(\bar{x}^{k+1}-x^{k+1})}
\\[1mm]
&=&
\inprod{\N^{\frac{1}{2}}(y^{k+1}-\bar y^{k+1})}
{\N^{-\frac{1}{2}}d_{y}^{k}}
\\[1mm]
&&+
\sigma\big\langle \N^\frac{1}{2}(y^{k+1}-\bar y^{k+1}),\N^{-\frac{1}{2}}\B\A^*(\bar x^{k+1}-x^{k+1})
\big\rangle.
\end{array}
$$
This, together with $\norm{\N^{-\frac{1}{2}}d^k_y} \leq \varepsilon_k$ and $\norm{x^{k+1}-\bar x^{k+1}}_{\M}\le\varepsilon_k$, implies that
$$
\begin{array}{ll}
\norm{y^{k+1}-\bar y^{k+1}}_{\N}
&\le
\norm{\N^{-\frac{1}{2}}d_{y}^{k}}
+\sigma\norm{\N^{-\frac{1}{2}}\B\A^{*}\M^{-\frac{1}{2}}}\norm{\bar x^{k+1}-x^{k+1}}_\M
\\[1mm]
&\le
\varepsilon_k+\sigma\|\N^{-\frac{1}{2}}\B\A^{*}
\M^{-\frac{1}{2}}\|\varepsilon_k,
\end{array}
$$
and this completes the proof.
\qed
\end{proof}

\section{An imsPADMM with Symmetric Gauss-Seidel Iteration}
\label{sgs-imspadmm}

We first present some results on the one cycle inexact symmetric Gauss-Seidel (sGS) iteration technique introduced in \cite{lisgs}.
Let $s\ge 2$ be a given integer and $\U:=\U_{1}\times\cdots\times\U_{s}$ with each $\U_{i}$ being a finite dimensional real Euclidean space.
For any $u\in\U$ we write $u\equiv(u_{1},\ldots,u_{s})$.
Let $\H:\U\to\U$ be a given self-adjoint positive semidefinite linear operator with the following block decomposition:
\[
\label{eq-Hu}
\H u:=
\begin{pmatrix}
\H_{11}&\H_{12}&\cdots &\H_{1s}\\
\H^*_{12}&\H_{22}&\cdots &\H_{2s}\\
\vdots&\vdots&\ddots&\vdots \\
\H^*_{1s}&\H^*_{2s}&\cdots &\H_{ss}
\end{pmatrix}
\begin{pmatrix}
u_{1}\\
u_{2}\\
\vdots\\
u_{s}
\end{pmatrix},
\quad
\H_{u} :=
\begin{pmatrix}
0&\H_{12}&\cdots&\H_{1s}\\
&\ddots&\ddots&\vdots\\
&&\ddots&\H_{(s-1)s}\\[3pt]
&&&0
\end{pmatrix}
\]
where $\H_{ii}$ are self-adjoint positive definite linear operators, $\H_{ij}:\U_{j}\to\U_{i}$,
$1\leq i < j \leq s$ are linear maps.
We also  define $\H_{d}u:=(\H_{11}u_{1},\ldots,\H_{ss}u_{s})$.
Note that $\H=\H_{d}+\H_{u}+\H_{u}^{*}$ and $\H_{d}$ is positive definite. To simplify later discussions, for any $u\in\U$, we denote
$u_{\le i}:=\{u_1,\ldots,u_i\}$, $u_{\ge i}:=\{u_i,\ldots,u_s\}$, $i=1,\ldots,s$.
We also define the self-adjoint positive semidefinite linear operator ${\rm sGS}(\cH):\U\to\U$ by
\[
\label{eq-sGS}
{\rm sGS}(\cH) :=\H_{u}\H_{d}^{-1}\H_{u}^{*}.
\]
Let $\theta:\U_{1}\to(-\infty,\infty]$ be a given closed proper convex function and $b\in \U$ be a given vector. Consider the quadratic function $h:\U\to(-\infty,\infty)$ defined by
$h(u):=\frac{1}{2}\langle u,\H u\rangle-\langle b,u\rangle$, $\forall u\in\U$.
Let $\tilde{\delta}_i,\delta_i\in\U_i$, $i=1,\ldots,s$ be given error tolerance vectors with
$\tilde{\delta}_1=\delta_{1}$. Define
\begin{eqnarray}
d(\tilde\delta,\delta)
:=\delta +
\H_{u}\H_{d}^{-1}
(\delta- \tilde{\delta}).
\label{def:d-general}
\end{eqnarray}
Suppose that $u^{-}\in\U$ is a given vector. We want to compute
\[
\label{wplus}
u^+:=\argmin_{u\in\U} \big\{
\theta(u_1)
+h(u)
+\frac{1}{2}\|u-u^{-}\|_{{\rm sGS}(\cH)}^2
-\inprod{d(\tilde\delta,\delta)}{u} \big\}.
\]
We have the following result, established by Li, Sun and Toh in \cite{lisgs} to generalize and reformulate their Schur complement based decomposition method
in \cite{li14} to the inexact setting,
for providing an equivalent  implementable procedure for computing $u^{+}$. This result is essential for our subsequent algorithmic developments.
\begin{proposition}[sGS decomposition]
\label{prop:sgs}
Assume that $\H_{ii}, i=1,\ldots,s$ are positive definite. Then
$$
\hat\H:=\H+{\rm sGS}(\H) =(\H_{d}+\H_{u})\H_{d}^{-1}(\H_{d}+\H_{u}^{*})\succ 0.
$$
Furthermore, for $i=s,s-1,\dots,2$, define $\tilde u_{i}$ by
\[
\label{eq:scbsub0}
\begin{array}{lll}
\widetilde u_i
&:= &\argmin_{u_i}
\big\{\theta(u^{-}_1)
+h(u^{-}_{\le i-1},u_i,\widetilde u_{\ge i+1})
-\langle\widetilde\delta_i,u_i\rangle
\big\}.
\end{array}
\]
Then the optimal solution $u^+$ defined by \eqref{wplus} can be obtained exactly via
\[
\label{eq:scbsub}
\left\{
\begin{array}{lll}
u_1^+
&:=&\argmin_{u_1}\big\{\theta(u_1)+h(u_1,\widetilde u_{\ge2})-\langle\delta_1,u_1\rangle\big\},
\\[1mm]
u_i^{+}
&:=&\argmin_{u_i}
\big\{
\theta(u_1^+)
+h(u^+_{\le i-1},u_i,\widetilde u_{\ge i+1})-\langle\delta_i,u_i\rangle\big\},
\quad i=2,\ldots,s.
\end{array}
\right.
\]
Moreover, the vector $d(\tilde{\delta},\delta)$ defined in \eqref{def:d-general} satisfies
\[
\label{ineq:sgserror}
\|\hat\H^{-\frac{1}{2}}d(\tilde{\delta},\delta)\|
\le \|\H_{d}^{-\frac{1}{2}}(\delta-\tilde\delta)\|
+\|\H_{d}^{\frac{1}{2}}(\H_{d}+\H_{u})^{-1}\tilde\delta\|.
\]
\end{proposition}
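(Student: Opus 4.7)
The plan is to decompose the proposition into three components---the factorization of $\hat\H$, the equivalence of the sGS sweeps with the argmin in \eqref{wplus}, and the error estimate \eqref{ineq:sgserror}---and treat each separately. For the factorization, I would directly expand $(\H_d+\H_u)\H_d^{-1}(\H_d+\H_u^*)=\H_d+\H_u+\H_u^*+\H_u\H_d^{-1}\H_u^*$ and identify the right-hand side as $\H+\mathrm{sGS}(\H)=\hat\H$. Positive definiteness is then immediate since, setting $L:=\H_d+\H_u^*$ (which is invertible because each $\H_{ii}\succ 0$), one has $\hat\H=L^*\H_d^{-1}L$, a congruence of $\H_d^{-1}\succ 0$.

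For the main equivalence, write $U:=\H_d+\H_u=L^*$, so that $\hat\H=U\H_d^{-1}L$. The first-order optimality condition of \eqref{wplus} reads
$$
0\in[\partial\theta(u_1^+);0;\ldots;0]+\hat\H u^+-\mathrm{sGS}(\H)u^--b-d(\tilde\delta,\delta).
$$
Next I would translate the backward sweep \eqref{eq:scbsub0} into block form: $(U\tilde u)_i+(\H_u^*u^-)_i=b_i+\tilde\delta_i$ for $i\ge 2$. Extending $\tilde u_1$ formally by the same relation at $i=1$ (using $\tilde\delta_1=\delta_1$ and noting that $\tilde u_1$ does not enter $\H_u\tilde u$) packages this as the single identity $U\tilde u+\H_u^*u^-=b+\tilde\delta$. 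The forward sweep \eqref{eq:scbsub}, with the subdifferential confined to the first block, likewise collapses into $Lu^++\H_u\tilde u=b+\delta-[r;0;\ldots;0]$ for some $r\in\partial\theta(u_1^+)$. The decisive algebraic step is the commutation $U\H_d^{-1}\H_u=\H_u\H_d^{-1}U$, with both sides expanding to $\H_u+\H_u\H_d^{-1}\H_u$. Applying $U\H_d^{-1}$ to the forward-sweep identity, using the backward-sweep identity to substitute for $\H_u^*u^-$, and then commuting, one recovers exactly the optimality condition displayed above. I expect the main bookkeeping obstacle here to be controlling the first block: one needs $U\H_d^{-1}[r;0;\ldots;0]=[r;0;\ldots;0]$, which holds because $\H_u$ annihilates any vector supported on the first block, so the subgradient term is not perturbed by the $U\H_d^{-1}$ multiplication. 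Uniqueness of $u^+$ (from $\hat\H\succ 0$ and convexity of $\theta$) then forces the coincidence.

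For the error estimate \eqref{ineq:sgserror}, I would first rewrite $d(\tilde\delta,\delta)=\tilde\delta+U\H_d^{-1}(\delta-\tilde\delta)$, using $(\H_d+\H_u)\H_d^{-1}=I+\H_u\H_d^{-1}$, and apply the triangle inequality to $\|\hat\H^{-1/2}d(\tilde\delta,\delta)\|$. For the piece carrying $\delta-\tilde\delta$, the identity $U^*\hat\H^{-1}U=U^*(U\H_d^{-1}U^*)^{-1}U=\H_d$ directly yields the bound $\|\H_d^{-1/2}(\delta-\tilde\delta)\|$. For the piece carrying $\tilde\delta$, the factorization $\hat\H^{-1}=(U^*)^{-1}\H_d U^{-1}$ gives $\|\hat\H^{-1/2}\tilde\delta\|=\|\H_d^{1/2}U^{-1}\tilde\delta\|=\|\H_d^{1/2}(\H_d+\H_u)^{-1}\tilde\delta\|$. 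Summing the two bounds produces the claimed inequality and completes the plan.
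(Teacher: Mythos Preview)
Your argument is correct in all three parts. The factorization and positive definiteness are immediate; the equivalence of the sGS sweeps with \eqref{wplus} is handled cleanly by your compact rewriting of the backward sweep as $U\tilde u+\H_u^*u^-=b+\tilde\delta$ and the forward sweep as $Lu^++\H_u\tilde u=b+\delta-[r;0;\ldots;0]$, together with the commutation identity $U\H_d^{-1}\H_u=\H_u\H_d^{-1}U$ and the observation that $U\H_d^{-1}$ fixes vectors supported on the first block; and the error bound follows from the factorization $\hat\H^{-1}=(U^*)^{-1}\H_d U^{-1}$ exactly as you describe.

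Regarding comparison: the paper itself does not prove this proposition but cites it from \cite{lisgs}, so there is no in-paper argument to set yours against. Your proof is in fact the natural one and matches the spirit of the derivation in the cited reference, which also proceeds by writing out the block optimality conditions of the two sweeps and collapsing them via the factorization $\hat\H=(\H_d+\H_u)\H_d^{-1}(\H_d+\H_u^*)$.
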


We should note that the above sGS decomposition theorem  is valid only
when the (possibly nonsmooth) function $\theta(\cdot)$ is dependent solely on
the first block variable $u_1$, and it is not applicable if there is an additional
nonsmooth convex function involving another block of variable.
In the above proposition, one should interpret  $\tilde{u}_i$ in \eqref{eq:scbsub0} and $u^+_i$ in \eqref{eq:scbsub}
as approximate solutions to the minimization problems without the terms involving
$\tilde{\delta}_i$ and $\delta_i$. Once
these approximate solutions have been computed, they would generate the error vectors
$\tilde{\delta}_i$ and $\delta_i$.
With these known error vectors, we know that $\tilde{u}_i$ and $u^+_i$  are actually
the exact solutions to the minimization problems in \eqref{eq:scbsub0} and
\eqref{eq:scbsub}.
It is important for us to emphasize that when solving the subproblems in the
forward GS sweep in \eqref{eq:scbsub} for $i=2,\ldots,s$, we may try to
estimate $u^+_i$ by using $\tilde{u}_i$, and in this case the corresponding error
vector $\delta_i$ would be given by
$\delta_i = \tilde{\delta}_i + \mbox{$\sum_{j=1}^{i-1}$} \H_{ji}^* (u^+_j-u_j^-)$.
In order to avoid solving the $i$-th problem in \eqref{eq:scbsub}, one may accept
such an approximate solution $u^+_i = \tilde{u}_i$ if the corresponding
error vector {satisfies} an admissible condition such as
$\norm{\delta_i} \leq c\norm{\tilde{\delta}_i}$ for some
constant $c>1$, say $c=10$.

We now show how to apply the  sGS iteration technique in Proposition \ref{prop:sgs}
to the imsPADMM proposed in Sec. \ref{imspadmm}. We should note that in the imsPADMM, the main issue is how to choose $\S$ and $\T$, and how to compute $x^{k+1}$ and $y^{k+1}$.
For later discussions, we use the following  decompositions
$$
\begin{pmatrix}
(\hat\Sigma_{f})_{11}&(\hat\Sigma_{f})_{12}&\cdots &(\hat\Sigma_{f})_{1m}\\
(\hat\Sigma_{f})^*_{12}&(\hat\Sigma_{f})_{22}&\cdots &(\hat\Sigma_{f})_{2m}\\
\vdots&\vdots&\ddots&\vdots \\
(\hat\Sigma_{f})^*_{1m}&(\hat\Sigma_{f})^*_{2m}&\cdots &(\hat\Sigma_{f})_{mm}
\end{pmatrix}
\
\mbox{and}\
\begin{pmatrix}
(\hat\Sigma_{g})_{11}&(\hat\Sigma_{g})_{12}&\cdots &(\hat\Sigma_{g})_{1n}\\
(\hat\Sigma_{g})^*_{12}&(\hat\Sigma_{g})_{22}&\cdots &(\hat\Sigma_{g})_{2n}\\
\vdots&\vdots&\ddots&\vdots \\
(\hat\Sigma_{g})^*_{1n}&(\hat\Sigma_{g})^*_{2n}&\cdots &(\hat\Sigma_{g})_{nn}
\end{pmatrix}
$$
for $\hat\Sigma_{f}$ and $\hat\Sigma_{g}$, respectively, which are consistent with the decompositions of $\X$ and $\Y$.

First, We choose two self-adjoint positive semidefinite linear operators $\tilde\S_{1}:\X_{1}\to\X_{1}$ and $\tilde\T_{1}:\Y_{1}\to\Y_{1}$ for the purpose making the minimization subproblems involving $p_{1}$ and $q_{1}$ easier to solve.
We need $\tilde\S_{1}$ and
$\tilde\T_{1} $  to satisfy the conditions that
$\tilde\M_{11}:=\tilde\S_{1}+(\hat\Sigma_{f})_{11}+\sigma\A_{1}\A_{1}^{*}\succ 0$ as well as $\tilde\N_{11}:=\tilde\T_{1}+(\hat\Sigma_{g})_{11}+\sigma\B_{1}\B_{1}^{*}\succ 0$.
With appropriately chosen $\tilde\S_1$ and $\tilde\T_1$, we can assume that the well-defined optimization problems
$$
\begin{array}{l}
{\min_{x_1}} \big\{ p(x_{1})+\frac{1}{2}\|x_{1}-x_{1}'\|^{2}_{\tilde\M_{11}} \big\} \quad\mbox{and}\quad
{\min_{y_1}} \big\{ q(y_{1})+\frac{1}{2}\|y_{1}-y_{1}'\|^{2}_{\tilde\N_{11}} \big\}
\end{array}
$$
can be solved to arbitrary accuracy for any given $x_{1}'\in\X_{1}$ and $y_{1}'\in\Y_{1}$.

Next, for $i=2,\ldots,m$, we choose a
linear operator $\tilde S_{i}\succeq 0$ such that
$
\tilde\M_{ii}:=\tilde\S_{i}+(\hat\Sigma_{f})_{ii}+\sigma\A_{i}\A_{i}^{*}\succ 0
$ and similarly, for $j=2,\ldots,n$, we choose a linear operator $\tilde\T_{j}\succeq 0$ such that $\tilde\N_{jj}:= \tilde\T_{j}+(\hat\Sigma_{g})_{jj}+\sigma\B_{j}\B_{j}^{*}\succ 0$.

Now,  we define the linear operators
\[
\tilde\M:= \hat\Sigma_f + \sigma \A\A^* + {\rm Diag}(\tilde\S_1,\ldots,\tilde\S_m),
\;\;  \tilde\N:= \hat\Sigma_g + \sigma \B\B^* + {\rm Diag}(\tilde\T_1,\ldots,\tilde\T_n).
\label{eq-MN}
\]
Moreover, define $\tilde\M_u$ and $\tilde\N_u$ analogously as
$\H_u$ in \eqref{eq-Hu} for $\tilde\M$ and $\tilde\N$, respectively, and
$$
\tilde\M_{d}:=\diag(\tilde\M_{11},\ldots,\tilde\M_{mm}),
\quad
\tilde\N_{d}:=\diag(\tilde\N_{11},\ldots,\tilde\N_{nn}).
$$
Then, $\tilde\M:=\tilde\M_{d}+\tilde\M_{u}+\tilde\M_{u}^{*}$ and $\tilde\N:=\tilde\N_{d}+\tilde\N_{u}+\tilde\N_{u}^{*}$.
Moreover, we define the following linear operators:
$$
\begin{array}{c}
\hat\S:=\diag(\tilde\S_{1},\dots,\tilde\S_{m})+ {\rm sGS}(\tilde \M),
\quad
\hat\M:=\hat\Sigma_{f}+\sigma\A\A^{*} +\hat\S,
\\[1mm]
\hat\T:=\diag(\tilde\T_{1},\dots,\tilde\T_{n})+ {\rm sGS}(\tilde \N)
\quad
\mbox{and}
\quad
\hat\N:=\hat\Sigma_{g}+\sigma\B\B^{*} +\hat\T,
\end{array}
$$
where ${\rm sGS}(\tilde\M)$ and ${\rm sGS}(\tilde\N)$ are defined as in \eqref{eq-sGS}.
Define the two constants
\[
\label{def:kappa}
\begin{array}{l}
\kappa := 2\sqrt{m-1}\|\tilde\M_{d}^{-\frac{1}{2}}\|+\sqrt{m}\|\tilde\M_{d}^{\frac{1}{2}}(\tilde\M_{d}+\tilde\M_{u})^{-1}\|,
\\[1mm]
\kappa'  := 2\sqrt{n-1}\|\tilde\N_{d}^{-\frac{1}{2}}\|+\sqrt{n}\|\tilde\N_{d}^{\frac{1}{2}}(\tilde\N_{d}+\tilde\N_{u})^{-1}\|.
\end{array}
\]
Based on the above discussions, we are ready to present the sGS-imsPADMM algorithm for solving problem \eqref{problem:primal}.

\centerline{
\fbox{\parbox{0.97\textwidth}{
{\bf Algorithm sGS-imsPADMM:} An inexact sGS based majorized semi-proximal ADMM for solving problem \eqref{problem:primal}.
\\
Let $\tau\in(0,(1+\sqrt{5})/2)$ be the step-length and $\{\tilde\varepsilon_k\}_{k\ge 0}$ be a summable sequence of nonnegative numbers.
Let $(x^0,y^0,z^0)\in\dom p\times\dom q\times\Z$ be the initial point. For $k=0,1,\ldots$, perform the following steps:
\begin{description}
\item[\bf Step 1a. ({Backward GS sweep})] Compute for $i=m,\ldots,2$,
\end{description}
$$
\begin{array}{l}
\tilde x_{i}^{k+1}\approx{\argmin_{x_{i}\in\X_{i}}} \big\{\hat{\L}_\sigma(x^{k}_{\le i-1},x_{i},\tilde x^{k+1}_{\ge i+1},y^{k}; w^k)+\frac{1}{2}\norm{x_{i}-x_{i}^k}_{\tilde S_{i}}^2 \big\},
\\[3mm]
\tilde\delta^k_i\in\partial_{x_{i}}\hat{\L}_\sigma(x^{k}_{\le i-1},\tilde x_{i}^{k+1},\tilde x^{k+1}_{\ge i+1},y^{k};w^k)+\tilde\S_{i}(\tilde x_{i}^{k+1}-x_{i}^{k})\  \mbox{with}\ \norm{\tilde\delta^k_i} \leq \tilde\varepsilon_k.
\end{array}
$$
\begin{description}
\item[\bf Step 1b. ({Forward GS sweep})] Compute for $i=1,\ldots,m$,
\end{description}
$$
\begin{array}{l}
x_{i}^{k+1} \approx {\argmin_{x_{i}\in\X_{i}}} \big\{\hat{\L}_\sigma(x^{k+1}_{\le i-1},x_{i},\tilde x_{\ge i+1},y^{k}; w^k)+\frac{1}{2}\norm{x_{i}-x_{i}^k}_{\tilde S_{i}}^2\big\},
\\[3mm]
\delta^k_i \in \partial_{x_{i}}\hat{\L}_\sigma(x^{k+1}_{\le i-1},x_{i}^{k+1},\tilde x_{\ge i+1},y^{k}; w^k)+\tilde\S_{i}(x_{i}^{k+1}-x_{i}^{k})\ \mbox{with}\ \norm{\delta_{i}^{k}} \leq \tilde\varepsilon_k.
\end{array}
$$
\begin{description}
\item[\bf Step 2a. ({Backward GS sweep})] Compute for $j=n,\ldots,2$,
\end{description}
$$
\begin{array}{l}
\tilde y_{j}^{k+1} \approx {\argmin_{y_{i}\in\Y_{i}}} \big\{\hat{\L}_\sigma(x^{k+1},y_{\le j-1}^{k},y_{j},\tilde y^{k+1}_{\ge j+1}; w^k)+\frac{1}{2}\norm{y_{j}-y_{j}^k}_{\tilde \T_{j}}^2\big\},
\\[3mm]
\tilde\gamma^k_j \in \partial_{y_{j}}\hat{\L}_\sigma(x^{k+1},y_{\le j-1}^{k},\tilde y^{k+1}_{j},\tilde y^{k+1}_{\ge j+1}; w^k)+\tilde\T_{j}(\tilde y^{k+1}_{j}-y_{i}^{k}) \ \mbox{with}\ \norm{\tilde\gamma^k_j} \leq \tilde\varepsilon_k.
\end{array}
$$
\begin{description}
\item[\bf Step 2b. ({Forward GS sweep})] Compute for $j=1,\ldots,n$,
\end{description}
$$
\begin{array}{l}
y_{j}^{k+1} \approx{\argmin_{y_{i}\in\Y_{i}}}\big\{\hat{\L}_\sigma(x^{k+1},y_{\le j-1}^{k+1},y_{j},\tilde y^{k+1}_{\ge j+1}; w^k)+\frac{1}{2}\norm{y_{j}-y_{j}^k}_{\tilde \T_{j}}^2\big\},
\\[3mm]
\gamma^k_j \in \partial_{y_{j}}\hat{\L}_\sigma(x^{k+1},y_{\le j-1}^{k+1},y^{k+1}_{j},\tilde y^{k+1}_{\ge j+1}; w^k)+\tilde\T_{j}(y^{k+1}_{j}-y_{i}^{k}) \  \mbox{with}\  \norm{\gamma^k_j} \leq\tilde\varepsilon_k.
\end{array}
$$
\begin{description}
\item[\bf Step 3.]  Compute $z^{k+1}:=z^k+\tau\sigma(\A^*x^{k+1}+\B^* y^{k+1}-c).$
\end{description}
}}}
\medskip
For any $k\ge 0$, and $\tilde\delta^{k}=(\tilde\delta_1^{k},\ldots,\tilde\delta_{m}^{k})$,
$\delta^{k}=(\delta_1^{k},\ldots,\delta_{m}^{k})$,
$\tilde\gamma^{k}=(\tilde\gamma_1^{k},\ldots,\tilde\gamma_{n}^{k})$
and
$\gamma^{k}=(\gamma_1^{k},\ldots,\gamma_{n}^{k})$ such that
 $\tilde\delta_{1}^{k+1}:=\delta_{1}^{k+1}$ and
$\tilde\gamma_{1}^{k+1}:=\gamma_{1}^{k+1}$, we define
\[\label{def:dxdy}
d_{x}^{k}:=\delta^{k}+\tilde\M_{u}\tilde\M_{d}^{-1}(\delta^{k}-\tilde\delta^{k})
\quad
\mbox{and}
\quad
d_{y}^{k}:=\gamma^{k}+\tilde\N_{u}\tilde\N_{d}^{-1}(\gamma^{k}-\tilde\gamma^{k}).
\]
Then, for the sGS-imsPADMM we have the following result.
\begin{proposition}
\label{prop:subgradient}
Suppose that $\tilde\M_{d}\succ 0$ and $\tilde\N_{d}\succ 0$
for $\tilde\M$ and $\tilde\N$ defined in \eqref{eq-MN}. Let $\kappa$ and $\kappa'$ be defined as in \eqref{def:kappa}.
Then, the sequences $\{w^k:=(x^{k},y^{k},z^{k})\}$, $\{\delta^{k}\}$, $\{\tilde\delta^{k}\}$, $\{\gamma^{k}\}$ and $\{\tilde\gamma^{k}\}$ generated by the sGS-imsPADMM are well-defined and it holds that
\[
\label{mnpd}
\hat\M= \tilde\M+{\rm sGS}(\tilde\M)  \succ 0,
\quad
\hat\N=  \tilde\N + {\rm sGS}(\tilde\N)  \succ 0.
\]
Moreover, for any $k\ge 0$, $d_{x}^{k}$ and $d_{y}^{k}$ defined by \eqref{def:dxdy} satisfy
\begin{eqnarray}
\label{relation:subgradient}
\left\{\begin{array}{l}
d_x^{k}\in\partial_{x}\big(\hat\L^{k}_\sigma(x^{{k+1}},y^{k})+\frac{1}{2}\|x^{k+1}-x^k\|^{2}_{\hat\S}\big),\\[2mm]
d_y^{k}\in\partial_{y}\big(\hat\L^{k}_\sigma(x^{{k+1}},y^{k+1})+\frac{1}{2}\|y^{k+1}-y^k\|^{2}_{\hat\T}\big),
\end{array} \right.
\\[5pt]
\label{ineq:d-ineq}
\begin{array}{l}
\|\hat\M^{-\frac{1}{2}}d_{x}^{k}\|
\le\kappa \tilde \varepsilon_{k},
\quad
\|\hat\N^{-\frac{1}{2}}d_{y}^{k}\|
\le\kappa' \tilde\varepsilon_{k}. \qquad
\end{array}
\end{eqnarray}
\end{proposition}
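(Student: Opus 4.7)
The plan is to recognize each of the two composite subproblems (for $x$ and for $y$) of the sGS-imsPADMM as an instance of the generic inexact block problem \eqref{wplus} treated in Proposition \ref{prop:sgs}, and then harvest the conclusions of that proposition. First, fix the iteration index $k$ and consider the exact majorized sPADMM $x$-subproblem
\[
\min_{x\in\X}\Bigl\{\hat{\L}_\sigma(x,y^k;w^k)+\tfrac{1}{2}\|x-x^k\|^2_{\tilde{\S}}\Bigr\}, \qquad \tilde{\S}:=\diag(\tilde{\S}_1,\ldots,\tilde{\S}_m),
\]
viewed as a function of $x=(x_1,\ldots,x_m)$. Because $\hat{f}(\cdot;x^k)$ is quadratic with Hessian $\hat{\Sigma}_f$ and the augmented term contributes $\sigma\A\A^*$, the total quadratic operator in $x$ is precisely $\tilde{\M}$ defined in \eqref{eq-MN}, while the nonsmooth part $p(x)=p_1(x_1)$ depends only on the first block. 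Thus the subproblem fits the template of \eqref{wplus} with $\H=\tilde{\M}$, $\theta=p_1$, and a suitable linear term $b$ coming from $\nabla f(x^k)$, $\A z^k$, $\sigma\A(\B^*y^k-c)$ and $\tilde{\S}x^k$.

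Next, I would verify that Steps 1a and 1b of the sGS-imsPADMM are exactly the backward sweep \eqref{eq:scbsub0} and forward sweep \eqref{eq:scbsub} for this template, with the computed residuals $\tilde{\delta}^k_i$ and $\delta^k_i$ playing the role of the error vectors there (using the stated convention $\tilde{\delta}^k_1=\delta^k_1$). Invoking the sGS decomposition theorem (Proposition \ref{prop:sgs}), we immediately get that $\hat{\M}=\tilde{\M}+\mathrm{sGS}(\tilde{\M})\succ 0$ (since $\tilde{\M}_d\succ 0$ by hypothesis), and that the output $x^{k+1}$ is the \emph{exact} minimizer of
\[
\min_{x\in\X}\Bigl\{\hat{\L}_\sigma(x,y^k;w^k)+\tfrac{1}{2}\|x-x^k\|^2_{\hat{\S}}-\inprod{d^k_x}{x}\Bigr\},
\]
where $d^k_x$ is precisely the vector defined in \eqref{def:dxdy}. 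Writing down the optimality condition for this minimization then yields the first inclusion in \eqref{relation:subgradient}. An entirely parallel argument for the $y$-subproblem, using $\tilde{\N}$ in place of $\tilde{\M}$, delivers the positive definiteness of $\hat{\N}$ and the second inclusion in \eqref{relation:subgradient}; note that the presence of the cross term $\sigma\B\A^*x^{k+1}$ in the $y$-subproblem is harmless because it only affects the linear part $b$, not the quadratic operator, and is fully determined once $x^{k+1}$ has been computed.

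For the quantitative estimates \eqref{ineq:d-ineq}, I would apply the bound \eqref{ineq:sgserror} of Proposition \ref{prop:sgs} to the $x$-subproblem:
\[
\|\hat{\M}^{-\frac{1}{2}}d^k_x\|\;\le\;\|\tilde{\M}_d^{-\frac{1}{2}}(\delta^k-\tilde{\delta}^k)\|+\|\tilde{\M}_d^{\frac{1}{2}}(\tilde{\M}_d+\tilde{\M}_u)^{-1}\tilde{\delta}^k\|.
\]
Because $\tilde{\delta}^k_1=\delta^k_1$, only the components $i=2,\ldots,m$ contribute to $\delta^k-\tilde{\delta}^k$, so the componentwise tolerances $\|\delta^k_i\|,\|\tilde{\delta}^k_i\|\le\tilde{\varepsilon}_k$ give $\|\delta^k-\tilde{\delta}^k\|\le 2\sqrt{m-1}\tilde{\varepsilon}_k$ and $\|\tilde{\delta}^k\|\le\sqrt{m}\tilde{\varepsilon}_k$. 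Pulling out operator norms then recovers exactly $\kappa\tilde{\varepsilon}_k$ with $\kappa$ as in \eqref{def:kappa}; the same bookkeeping with $n$ and $\tilde{\N}$ gives the bound by $\kappa'\tilde{\varepsilon}_k$.

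The only real subtlety, and the place I expect to need to be careful, is matching the labelling of the error vectors generated by the sGS-imsPADMM (one produced at each subproblem solve) with the roles of $\tilde{\delta},\delta$ in Proposition \ref{prop:sgs}, and in particular checking that the formula \eqref{def:dxdy} for $d^k_x$ (and $d^k_y$) is exactly the vector $d(\tilde{\delta},\delta)$ appearing in \eqref{def:d-general} for $\H=\tilde{\M}$ (respectively $\tilde{\N}$). Once this identification is in place, the remainder of the argument is a direct application of Proposition \ref{prop:sgs} together with elementary norm estimates; well-definedness of the iterates follows from the fact that every inner subproblem is a strongly convex minimization thanks to $\tilde{\M}_{ii}\succ 0$ and $\tilde{\N}_{jj}\succ 0$.
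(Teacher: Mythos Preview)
Your proposal is correct and follows essentially the same approach as the paper: both reduce the claim to a direct application of Proposition~\ref{prop:sgs} (identifying the $x$- and $y$-subproblems with the template \eqref{wplus} for $\H=\tilde\M$ and $\H=\tilde\N$), and then bound $\|\hat\M^{-\frac{1}{2}}d_x^k\|$ via \eqref{ineq:sgserror} together with the elementary estimates $\|\delta^k-\tilde\delta^k\|\le 2\sqrt{m-1}\,\tilde\varepsilon_k$ and $\|\tilde\delta^k\|\le\sqrt{m}\,\tilde\varepsilon_k$. Your write-up is in fact more explicit than the paper's terse proof about why the sGS sweeps in Steps~1a--1b match \eqref{eq:scbsub0}--\eqref{eq:scbsub} and why the constants $2\sqrt{m-1}$ and $\sqrt{m}$ arise, but the underlying argument is identical.
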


\begin{proof}
By Proposition \ref{prop:sgs} we can readily get \eqref{mnpd}. Furthermore, by Proposition \ref{prop:sgs} we can also obtain \eqref{relation:subgradient} from \eqref{def:dxdy}.
By using \eqref{ineq:sgserror} we can get
$$
\begin{array}{ll}
\|\hat\M^{-\frac{1}{2}}d_{x}^{k}\|
&
\le
\|\tilde\M_{d}^{-\frac{1}{2}}\|\|\delta^{k}-\tilde\delta^{k}\|
+\|\tilde\M_{d}^{\frac{1}{2}}(\tilde\M_{d}+\tilde\M_{u})^{-1}\|\|\tilde\delta^{k}\|
\\[1mm]
&\le \big(2\sqrt{m-1}\|\tilde\M_{d}^{-\frac{1}{2}}\|+\sqrt{m}\|\tilde\M_{d}^{\frac{1}{2}}(\tilde\M_{d}+\tilde\M_{u})^{-1}\|\big)\tilde\varepsilon_{k}.
\end{array}
$$
From here and \eqref{def:kappa}, the required inequality for $\|\hat\M^{-\frac{1}{2}}d_{x}^{k}\|$ in \eqref{ineq:d-ineq}  follows.
We can prove the second inequality  in \eqref{ineq:d-ineq} similarly.
\qed
\end{proof}

\begin{remark}
\label{remark4}
(a) If in the imsPADMM, we choose $\S:=\hat\S$, $\T:=\hat\T$, then we have
$\M=\hat\M\succ 0$ and $\N=\hat\N\succ 0$.
Moreover, we can define the sequence $\{\varepsilon_{k}\}$ by $\varepsilon_{k}:=\max\{\kappa,\kappa'\}\tilde\varepsilon_{k}\ \forall k\ge 0$, so that the sequence  $\{\varepsilon_k\}$ is summable if $\{\tilde\varepsilon_k\}$ is summable. Note that  the sequence $\{w^{k}\}$ generated by the sGS-imsPADMM always satisfies $\norm{\M^{-\frac{1}{2}}d^k_x} \leq \varepsilon_k$ and $\norm{\N^{-\frac{1}{2}}d^k_y} \leq \varepsilon_k$. Thus, $\{w^k\}$ can be viewed as a sequence generated by  the imsPADMM  with specially constructed semi-proximal terms.
To sum up, the sGS-imsPADMM is an explicitly implementable method to handle high-dimensional convex composite conic optimization problems, while the imsPADMM has a compact formulation which can facilitate the convergence analysis of the sGS-imsPADMM.
\\[5pt]
(b) As was discussed in the paragraph ensuing Proposition \ref{prop:sgs}, when implementing the sGS-imsPADMM algorithm, we
can use the  $\tilde{x}^{k+1}_i$ computed in the backward GS sweep (Step 1a) to estimate $x^{k+1}_i$  in the forward sweep (Step 1b) for $i=2,\ldots,m$. In this case, the corresponding
error vector is given by
$\delta^k_i = \tilde{\delta}^k_i + \sum_{j=1}^{i-1} \tilde{\M}_{ij}({x}^{k+1}_j-x^k_j)$,
and we may accept the approximate solution $x^{k+1}_i = \tilde{x}^{k+1}_i$ without solving an additional subproblem if $\norm{\delta^k_i}\leq \tilde{\varepsilon}_k$.
A similar strategy also applies to the subproblems in Step 2b for $j=2,\ldots,n$.
\end{remark}

\section{Convergence Analysis}
\label{convergence}

First, we prepare some definitions and notations that will be used throughout this and the next sections.
Since $\{\varepsilon_k\}$ is nonnegative and summable, we can define $\E:=\sum_{k=0}^{\infty}\varepsilon_{k}$ and $\E':=\sum_{k=0}^{\infty}\varepsilon^{2}_{k}$.
Let $\{w^k:=(x^{k},y^{k},z^{k})\}$ be the sequence generated by the imsPADMM and $\{(\bar x^{k},\bar y^{k})\}$ be defined by \eqref{barxplus} and \eqref{baryplus}.
We define the mapping $\R:\X\times\Y\to\Z$ by $\R(x,y):=\A^*x+\B^*y-c$, $\forall(x,y)\in\X\times\Y$, and the following variables, for $k\ge 0$,
$$
\begin{array}{l}
\Delta_x^k:=x^k-x^{k+1},\
\Delta_y^k:=y^k-y^{k+1},\
r^{k}:=\R(x^{k},y^k),\
\bar r^{k}:=\R(\bar x^{k},\bar y^{k}),
\\[1mm]
\Lambda_{x}^{k}:=\bar x^{k}-x^{k},\
\Lambda_{y}^{k}:=\bar y^{k}-y^{k},\
\Lambda_{z}^{k}:=\bar z^{k}-z^{k},
\\[1mm]
\tilde z^{k+1}:=z^{k}+\sigma r^{k+1},\
\bar z^{k+1}:=z^{k}+\tau\sigma\bar r^{k+1},
\end{array}
$$
with the convention that $\bar x^0=x^0$ and $\bar y^0=y^0$.
For any $k\ge 1$, by Clarke's Mean Value Theorem \cite[Proposition 2.6.5]{clarke} there are two self-adjoint linear operators $0\preceq \P_{x}^{k}\preceq\hat\Sigma_{f}$ and $0\preceq\P_{y}^{k}\preceq \hat\Sigma_{g}$ such that
\[
\label{eq:meanvalue-y}
\nabla f(x^{k-1})-\nabla f(x^{k})=\P_{x}^{k}\Delta_x^{k-1}, \quad
\nabla g(y^{k-1})-\nabla g(y^{k})=\P_{y}^{k}\Delta_y^{k-1}.
\]
We define three constants $\alpha$, $\hat\alpha$, $\beta$ by $\alpha:=(1+{\tau}/{\min\{1+\tau,1+\tau^{-1}\}})/2$,
\[
\label{def:beta}
\hat\alpha:=1-\alpha\min\{\tau,\tau^{-1}\},
\quad
\beta:=\min\{1,1-\tau+\tau^{-1}\}\alpha-(1-\alpha)\tau.
\]
Since $\tau\in(0,(1+\sqrt{5})/2)$, it holds that $0<\alpha<1$, $0<\hat\alpha<1$ and $\beta>0$.
Now, we define for any $w\in\W$ and $k\ge 0$,
\begin{eqnarray}
&&\begin{array}{ll}
\phi_{k}(w):=&\
\frac{1}{\tau\sigma}\|z-z^{k}\|^{2}
+\|x-x^{k}\|_{\hat\Sigma_{f}+\S}^{2}
+\|y-y^{k}\|_{\hat\Sigma_{g}+\T}^{2}
\\[1mm]
&+\;\sigma\|\R(x,y^{k})\|^{2}
+\hat\alpha\sigma\norm{r^k}^{2}
+\alpha\norm{\Delta_y^{k-1}}_{\hat\Sigma_g+\T}^2\, ,
\end{array}
\label{def:phik}
\\[1mm]
&&\begin{array}{ll}
\bar\phi_{k}(w):=&
\frac{1}{\tau\sigma}\|z-\bar z^{k}\|^{2}
+\|x-\bar x^{k}\|_{\hat\Sigma_{f}+\S}^{2}
+\|y-\bar y^{k}\|_{\hat\Sigma_{g}+\T}^{2}
\\[1mm]
\nonumber
&+\;\sigma\|\R(x,\bar y^{k})\|^{2}
+\hat\alpha\sigma\norm{\bar{r}^k}^{2}
+\alpha\norm{\bar y^{k}-y^{k-1}}_{\hat\Sigma_g+\T}^2\, .
\end{array}
\end{eqnarray}
Moreover, since $f$ and $g$ are continuously differentiable, there exist
 two self-adjoint positive semidefinite linear operators $\Sigma_f\preceq\hat\Sigma_f$ and $\Sigma_g\preceq\hat\Sigma_g$ such that
\begin{equation}
\left\{
\begin{array}{rl}
f(x)&\ge f({x}^\prime)+\langle \nabla f({x}^\prime),x-{x}^\prime\rangle
+\frac{1}{2}\|x-{x}^\prime\|_{\Sigma_f}^2,
\ \forall\, x,x'\in\X,
\label{ineq:convex-fg}
\\[1mm]
g(y)&\ge g({y}^\prime)+\langle \nabla g({y}^\prime), y- {y}^\prime\rangle
+\frac{1}{2}\|y-{y}^\prime\|_{\Sigma_g}^2, \ \forall\, y,y'\in\Y.
\end{array}
\right.
\end{equation}
Additionally, we define the following two  linear operators
\[
\label{def:FG}
\begin{array}{l}
\F:=\frac{1}{2}\Sigma_{f}+\S+\frac{(1-\alpha)\sigma}{2}\A\A^{*}, \quad
\G:=\frac{1}{2}\Sigma_{g}+\T+\min\{\tau,1+\tau-\tau^2\}\alpha\sigma\B\B^{*}.
\end{array}
\]
The following lemma will be used later.
\begin{lemma}
\label{lemma:sq-sum}
Let $\{a_k\}_{k\ge0}$ be a nonnegative sequence satisfying   $a_{k+1}\le a_k+\varepsilon_k$ for all $k\ge 0$,  where $\{\varepsilon_k\}_{k\ge 0}$ is  a nonnegative and summable sequence of real numbers.
Then the  quasi-Fej\'er monotone sequence $\{a_k\}$ converges to a unique limit point.
\end{lemma}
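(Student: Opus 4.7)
The plan is to convert the quasi-monotone sequence $\{a_k\}$ into a genuinely monotone sequence by absorbing the tail of $\{\varepsilon_k\}$ into it, and then to invoke the standard fact that every bounded monotone sequence in $\mathbb{R}$ converges.

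Concretely, I would first set $E_k := \sum_{j=k}^{\infty}\varepsilon_j$, which is well-defined and nonnegative because $\{\varepsilon_k\}$ is summable, and satisfies $E_k \to 0$ as $k \to \infty$. Then define $b_k := a_k + E_k$. From the hypothesis $a_{k+1} \le a_k + \varepsilon_k$ and the identity $E_k = \varepsilon_k + E_{k+1}$, a one-line computation yields
\[
b_{k+1} \;=\; a_{k+1} + E_{k+1} \;\le\; a_k + \varepsilon_k + E_{k+1} \;=\; a_k + E_k \;=\; b_k,
\]
so $\{b_k\}$ is non-increasing. Since $a_k \ge 0$ and $E_k \ge 0$, we also have $b_k \ge 0$.

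Being non-increasing and bounded below by $0$, the sequence $\{b_k\}$ converges to some finite limit $b^\ast \ge 0$. Because $E_k \to 0$, the identity $a_k = b_k - E_k$ then forces $a_k \to b^\ast$, which establishes both the existence and the uniqueness of the limit of $\{a_k\}$.

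I do not anticipate a serious obstacle here: the result is essentially the textbook statement that any nonnegative quasi-Fej\'er monotone real sequence converges, and the only mild point requiring care is to make sure the auxiliary tail $E_k$ is finite (guaranteed by summability of $\{\varepsilon_k\}$) so that $b_k$ is well-defined. The short proof above is what I would present.
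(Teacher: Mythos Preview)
Your proof is correct. The paper actually states this lemma without proof, treating it as a standard fact about quasi-Fej\'er monotone sequences, so there is no argument to compare against; your tail-sum trick $b_k := a_k + \sum_{j\ge k}\varepsilon_j$ is the usual textbook route and works exactly as you describe.
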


Now we start to analyze the convergence of the imsPADMM.
\begin{lemma}
\label{lemma:4.1}
Let $\{w^k\}$ be the sequence generated by the imsPADMM for solving problem \eqref{problem:primal}. For any $k\ge1$, we have
\begin{eqnarray}
&&
\begin{array}{l}
(1-\tau)\sigma\|r^{k+1}\|^2
+\sigma\|\R(x^{k+1},y^k)\|^2+2\alpha\inprod{d_y^{k-1}-d_y^k}{\Delta_y^k}
\\[1mm]
\ge \;
\hat{\alpha}\sigma(\| r^{k+1}\|^{2}-\|r^{k}\|^{2})
+\beta \sigma\norm{r^{k+1}}^{2}
+\norm{\Delta_x^{k}}_{\frac{(1-\alpha)\sigma}{2}\A\A^{*}}^{2}
\\[1mm]
\quad
-\|\Delta_y^{k-1}\|_{\alpha(\hat\Sigma_g+\T)}^2
+\|\Delta_y^k\|_{\alpha(\hat\Sigma_g+\T)+\min\{\tau,1+\tau-\tau^2\}\alpha\sigma\B\B^{*}}^2.
\end{array}
\label{ineq:rr}
\end{eqnarray}
\end{lemma}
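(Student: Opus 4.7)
The plan is to derive \eqref{ineq:rr} from the first-order optimality of the $y$-subproblem at two consecutive iterations combined with the monotonicity of $\partial q$. Starting from $d_y^k\in\partial\varphi_k(y^{k+1})$, unpacking the definitions of $\varphi_k$ and $l_y^k$, and substituting the multiplier update $z^k=z^{k+1}-\tau\sigma r^{k+1}$ from Step 3, I would rewrite the inclusion as
$$
d_y^k-\nabla g(y^k)-(\hat\Sigma_g+\T)(y^{k+1}-y^k)-\B z^{k+1}-(1-\tau)\sigma\B r^{k+1}\in\partial q(y^{k+1}).
$$
The analogous inclusion at iteration $k-1$ produces an element of $\partial q(y^k)$. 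Taking the inner product of the difference of these two subgradients with $y^{k+1}-y^k=-\Delta_y^k$ and invoking monotonicity of $\partial q$ yields an inequality whose principal ingredients are $\inprod{\Delta_y^k}{d_y^{k-1}-d_y^k}$, $\tau\sigma\inprod{\B^*\Delta_y^k}{r^{k+1}}$, $(1-\tau)\sigma\inprod{\B^*\Delta_y^k}{r^{k+1}-r^k}$, the proximal cross term $\inprod{\Delta_y^k}{(\hat\Sigma_g+\T)\Delta_y^{k-1}}$, and a gradient-drift term $\inprod{\Delta_y^k}{\P_y^k\Delta_y^{k-1}}$ coming from \eqref{eq:meanvalue-y}, which is absorbed using $0\preceq\P_y^k\preceq\hat\Sigma_g\preceq\hat\Sigma_g+\T$.

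The second step is to convert every scalar product involving residuals into squared norms via the two identities
$$
\B^*\Delta_y^k=\R(x^{k+1},y^k)-r^{k+1},\qquad \A^*\Delta_x^k+\B^*\Delta_y^k=r^k-r^{k+1},
$$
together with the polarization identities \eqref{eq:triangle}. After a uniform $\alpha$-scaling of the whole inequality, followed by a convex split of $\tau\sigma\|r^{k+1}\|^2$ into $\alpha$- and $(1-\alpha)$-weighted pieces, the $\alpha$-part is absorbed into the telescoping block $\hat\alpha\sigma(\|r^{k+1}\|^2-\|r^k\|^2)$, while the $(1-\alpha)$-part pairs with the $\|\A^*\Delta_x^k\|^2$ term extracted from the $(1-\tau)\sigma$ cross product to yield the coefficient $\frac{(1-\alpha)\sigma}{2}\A\A^*$ on $\Delta_x^k$. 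Regrouping the $\Delta_y$ quadratic forms produces the $\alpha(\hat\Sigma_g+\T)$ operator on $\Delta_y^{k-1}$ and $\alpha(\hat\Sigma_g+\T)+\min\{\tau,1+\tau-\tau^2\}\alpha\sigma\B\B^*$ on $\Delta_y^k$.

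The three constants $\alpha,\hat\alpha,\beta$ in \eqref{def:beta} are calibrated so that every coefficient inequality holds uniformly over the two regimes $\tau\in(0,1]$ and $\tau\in[1,(1+\sqrt{5})/2)$. The precise choice $\alpha=(1+\tau/\min\{1+\tau,1+\tau^{-1}\})/2$ is exactly what forces both sides of the $\tau\sigma\|r^{k+1}\|^2$ split to give nonnegative residues, and the min in the definition of $\beta$ and in the $\B\B^*$ surcharge arise from the same dichotomy. I expect the main obstacle to be purely algebraic bookkeeping rather than any conceptual hurdle: the four polarization expansions generate on the order of a dozen quadratic quantities, and verifying that their coefficients collapse to \emph{exactly} the six blocks on the right-hand side of \eqref{ineq:rr}, while simultaneously checking $\hat\alpha\in(0,1)$ and $\beta>0$, requires treating the cases $\tau\le1$ and $\tau\ge1$ separately and is the true core of the argument.
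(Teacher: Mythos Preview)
Your proposal is correct and follows essentially the same route as the paper: consecutive $y$-optimality inclusions combined via monotonicity of $\partial q$, the residual identities you list, and then a convex $\alpha/(1-\alpha)$ combination of the resulting inequality with a Cauchy--Schwarz lower bound on $\sigma\|\R(x^{k+1},y^k)\|^2=\sigma\|r^k-\A^*\Delta_x^k\|^2$ to extract the $\frac{(1-\alpha)\sigma}{2}\A\A^*$ term. The only imprecision is the phrase ``polarization identities'': the cross terms $2(1-\tau)\sigma\inprod{r^k}{\B^*\Delta_y^k}$ and $-2\inprod{r^k}{\A^*\Delta_x^k}$ are handled by genuine Cauchy--Schwarz \emph{inequalities} (the paper's \eqref{ineq:2case} and the estimate preceding \eqref{ineq:1-alpha}), not equalities, and it is precisely these that produce the $\tau\le1$ versus $\tau>1$ dichotomy and the $\min\{\tau,1+\tau-\tau^2\}$ coefficient you anticipate.
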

\begin{proof}
First note that
$\sigma r^{k+1}=\widetilde z^{k+1}-\widetilde z^k+(1-\tau)\sigma r^k$.
By using the fact that $\R(x^{k+1},y^k)=r^{k+1}+\B^*\Delta_y^k$, we have
\begin{eqnarray}
\label{eq:rkplus1}
\begin{array}{l}
(1-\tau)\sigma\|r^{k+1}\|^2
+\sigma\|\R(x^{k+1},y^k)\|^2
\\[1mm]
=(2-\tau)\sigma\|r^{k+1}\|^2
+\sigma\|\B^*\Delta_y^k\|^2
+2\langle \sigma r^{k+1},\B^*\Delta_y^k\rangle
\\[1mm]
=(2-\tau)\sigma\|r^{k+1}\|^2
+\sigma\|\Delta_y^k\|^2_{\B\B^*}
+2(1-\tau)\sigma\langle r^k,\B^{*}\Delta_y^k\rangle
+2\langle \widetilde z^{k+1}-\widetilde z^k,\B^*\Delta_y^k\rangle.
\end{array}
\end{eqnarray}
Since $\hat\Sigma_{g}\succeq \P_{y}^{k}\succeq 0$,
by using \eqref{eq:triangle}
with $\cH := \hat\Sigma_g+\T-\P_y^k\succeq 0$ and $u=\Delta_y^{k-1}$, $v=\Delta_y^{k}$,
we have
\[
\label{ineq:yy}
\begin{array}{ll}
-2\inprod{\Delta_y^{k-1}}{\Delta_y^k}_{\hat\Sigma_g+\T-\P_{y}^k}
\;\geq\;
-\norm{\Delta_y^k}_{\hat\Sigma_g+\T-\P_y^k}^2
-\norm{\Delta_y^{k-1}}_{\hat\Sigma_g+\T-\P_y^k}^2
\\[2mm]
\ge
-\norm{\Delta_y^k}_{\hat\Sigma_g+\T}^2
-\norm{\Delta_y^{k-1}}_{\hat\Sigma_g+\T}^2.
\end{array}
\]
From Step 2 of the imsPADMM we know that for any $k\ge 0$,
\[
\label{opt:y}
\begin{array}{l}
d_{y}^k-\nabla g(y^k)-\B\widetilde z^{k+1}+(\hat\Sigma_g+\T)\Delta_y^k\in\partial q(y^{k+1}).
\end{array}
\]
By using \eqref{opt:y} twice and the maximal monotonicity of $\partial q$, we have for $k\ge 1$,
$$
\begin{array}{l}
\inprod{
 d_y^k-d_y^{k-1}+\nabla g(y^{k-1})-\nabla g(y^k)-\B(\widetilde z^{k+1}-\widetilde z^k)}{-\Delta_y^k}
\\[1mm]
+\inprod{(\hat\Sigma_g+\T)(\Delta_y^k-\Delta_y^{k-1})}{-\Delta_y^k} \ge 0,
\end{array}
$$
which, together with \eqref{eq:meanvalue-y} and \eqref{ineq:yy}, implies that
\begin{eqnarray}
\label{ineq:zzbd}
\begin{array}{l}
\inprod{\widetilde z^{k+1}-\widetilde z^k}{\B^*\Delta_y^{k}}
-\inprod{d_y^k- d_y^{k-1}}{\Delta_y^k}
\\[1mm]
\ge\|\Delta_y^k\|_{\hat\Sigma_g+\T}^2
-\inprod{\Delta_y^{k-1}}{\Delta_y^k}_{\hat\Sigma_g+\T-\P_y^k}
\ge
\frac{1}{2}\norm{\Delta_y^k}_{\hat\Sigma_g+\T}^2
-\frac{1}{2}\norm{\Delta_y^{k-1}}_{\hat\Sigma_g+\T}^2
.
\end{array}
\end{eqnarray}
On the other hand, by using the Cauchy-Schwarz inequality we have
\[
\label{ineq:2case}
\tau\|\B^*\Delta_y^k\|^2
+\tau^{-1}\|r^k\|^2
\ge
2\langle r^k,\B^*\Delta_y^k\rangle\\[2mm]
\ge
-\|\B^*\Delta_y^k\|^2
-\|r^k\|^2.
\]
Now, by applying \eqref{ineq:zzbd} and \eqref{ineq:2case} in \eqref{eq:rkplus1}, we can get
\begin{eqnarray}
&& \hspace{-0.5cm} (1-\tau)\sigma\|r^{k+1}\|^2
+\sigma\|\R(x^{k+1},y^k)\|^2
+2\langle d_y^{k-1}-d_y^k,\Delta_y^k\rangle
\nonumber \\
&\ge &
\max\{1-\tau,1-\tau^{-1}\}\sigma(\norm{r^{k+1}}^2-\norm{r^{k}}^2)
+\norm{\Delta_y^{k}}_{\hat\Sigma_g+\T}^2
- \norm{\Delta_y^{k-1}}_{\hat\Sigma_g+\T}^2
\nonumber \\
\quad\quad\
& & +\min\{\tau,1+\tau-\tau^2\}\sigma (\|\B^*\Delta_y^k\|^2+\tau^{-1}\|r^{k+1}\|^2).
\label{ineq:step-length}
\end{eqnarray}
By using the Cauchy-Schwarz inequality we know that
$$
\begin{array}{l}
\|\R(x^{k+1},y^{k})\|^{2}
=\|
r^{k}-\A^{*}\Delta_x^{k}\|^{2}
=\| r^{k}\|^{2}
+\|\A^{*}\Delta_x^{k}\|^{2}
-2\langle r^{k},\A^{*}\Delta_x^{k}\rangle
\\[1mm]
\ge\|r^{k}\|^{2}
+\|\A^{*}\Delta_x^{k}\|^{2}
-2\|r^{k}\|^{2}
-\frac{1}{2}\norm{\A^*\Delta_x^{k}}^{2}
=
\frac{1}{2}\norm{\A^*\Delta_x^{k}}^{2}
-\norm{r^{k}}^2,
\end{array}
$$
so that for any $\alpha\in(0,1]$, we have
\begin{eqnarray}
\label{ineq:1-alpha}
\begin{array}{ll}
&
(1-\alpha)\big((1-\tau)\sigma\|r^{k+1}\|^2
+\sigma\|\R(x^{k+1},y^k)\|^2\big)
\\[1mm]
& \ge
(1-\alpha)\big((1-\tau)\sigma\|r^{k+1}\|^{2}-\sigma\|r^{k}\|^{2}+\frac{\sigma}{2}\|\A^{*}\Delta_x^{k}\|^{2}\big)
\\[1mm]
& =
(\alpha-1)\tau\sigma\|r^{k+1}\|^{2}+(1-\alpha)\sigma(\|r^{k+1}\|^{2}-\|r^{k}\|^{2})
+\frac{(1-\alpha)\sigma}{2}\|\Delta_x^{k}\|_{\A\A^{*}}^{2}.
\end{array}
\end{eqnarray}
Finally by adding \eqref{ineq:1-alpha} to the inequality  generated by multiplying $\alpha$ to both sides of \eqref{ineq:step-length}, we can get \eqref{ineq:rr}.
This completes the proof.
\qed
\end{proof}
Next, we shall derive an inequality which is essential for
establishing both the global convergence and the iteration complexity of  the imsPADMM.

\begin{proposition}
\label{prop:inequality}
Suppose that the solution set $\overline\W$ to the KKT system \eqref{eq:kkt} of problem \eqref{problem:primal} is nonempty. Let $\{w^k\}$ be the sequence generated by  the imsPADMM. Then, for any $\bar w:=(\bar x,\bar y,\bar z)\in\overline\W$ and $k\ge 1$,
\[
\label{ineq:mainb}
\begin{array}{l}
2\alpha\inprod{ d_y^k- d_y^{k-1}}{\Delta_y^k}
-2\inprod{d^k_x}{x^{k+1}-\bar{x}} -  2\inprod{d^k_y}{y^{k+1}-\bar{y}}
\\[2mm]
+\norm{\Delta_x^{k}}^{2}_{\F}+\norm{\Delta_y^{k}}^{2}_{\G}
+\beta\sigma\norm{r^{k+1}}^{2}
\le
\phi_k(\bar w) - \phi_{k+1}(\bar w).
\end{array}
\]
\end{proposition}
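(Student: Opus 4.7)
The plan is to derive \eqref{ineq:mainb} via the classical energy-decrease argument for proximal ADMMs, using Lemma~\ref{lemma:4.1} as the workhorse that exchanges the coupling residual terms $(1-\tau)\sigma\|r^{k+1}\|^2+\sigma\|\R(x^{k+1},y^k)\|^2$ for the telescoping increments prescribed by $\phi_k$. First, I would make the subgradient inclusions implied by Steps~1 and 2 of the imsPADMM explicit: using $\tilde z^{k+1}=z^k+\sigma r^{k+1}$ and $\R(x^{k+1},y^k)=r^{k+1}+\B^*\Delta_y^k$, the conditions $d_x^k\in\partial\psi_k(x^{k+1})$ and $d_y^k\in\partial\varphi_k(y^{k+1})$ translate into $d_x^k-\nabla f(x^k)-\A\tilde z^{k+1}-(\hat\Sigma_f+\S)(x^{k+1}-x^k)-\sigma\A\B^*\Delta_y^k\in\partial p(x^{k+1})$ and $d_y^k-\nabla g(y^k)-\B\tilde z^{k+1}-(\hat\Sigma_g+\T)(y^{k+1}-y^k)\in\partial q(y^{k+1})$. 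Combining these with the KKT inclusions $-\A\bar z-\nabla f(\bar x)\in\partial p(\bar x)$ and $-\B\bar z-\nabla g(\bar y)\in\partial q(\bar y)$ through the monotonicity of $\partial p$ and $\partial q$ yields a master inequality whose one side is $\langle d_x^k,x^{k+1}-\bar x\rangle+\langle d_y^k,y^{k+1}-\bar y\rangle$ and whose other side bundles the multiplier pairing, the two semi-proximal pairings, the gradient differences, and the coupling term $\sigma\langle\A\B^*\Delta_y^k,x^{k+1}-\bar x\rangle$.

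The second step processes each of these bundled terms. Since $\A^*\bar x+\B^*\bar y=c$, the multiplier pairing reduces to $\langle\tilde z^{k+1}-\bar z,r^{k+1}\rangle$, which under the substitutions $\tilde z^{k+1}=z^k+\sigma r^{k+1}$ and $z^{k+1}=z^k+\tau\sigma r^{k+1}$ is converted via identity \eqref{eq:triangle} into the telescoping quantity $\tfrac{1}{2\tau\sigma}(\|z^{k+1}-\bar z\|^2-\|z^k-\bar z\|^2)+\tfrac{(2-\tau)\sigma}{2}\|r^{k+1}\|^2$. The two semi-proximal pairings are handled with the same identity and supply the $\|x-x^k\|_{\hat\Sigma_f+\S}^2$ and $\|y-y^k\|_{\hat\Sigma_g+\T}^2$ telescoping increments present in $\phi_k-\phi_{k+1}$. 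For the gradient contributions $\langle\nabla f(x^k)-\nabla f(\bar x),x^{k+1}-\bar x\rangle$ (and its $g$-analogue), I would split via \eqref{eq:meanvalue-y} as $\langle\P_x^{k+1}\Delta_x^k,x^{k+1}-\bar x\rangle+\langle\nabla f(x^{k+1})-\nabla f(\bar x),x^{k+1}-\bar x\rangle$; the lagging piece is controlled by a Cauchy--Schwarz estimate together with $\P_x^{k+1}\preceq\hat\Sigma_f$, and the contemporaneous piece is bounded below by $\|x^{k+1}-\bar x\|^2_{\Sigma_f}$ via the convexity inequality \eqref{ineq:convex-fg}. After cancellation with pieces coming from the semi-proximal telescopes, these gradient contributions account for the $\tfrac{1}{2}\Sigma_f$ and $\tfrac{1}{2}\Sigma_g$ summands of $\F$ and $\G$ in \eqref{def:FG}.

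Third, I rewrite the coupling term $\sigma\langle\A\B^*\Delta_y^k,x^{k+1}-\bar x\rangle$ using the identity $\A^*(x^{k+1}-\bar x)=r^{k+1}+\B^*(\bar y-y^{k+1})$; the two resulting pieces, together with the $\tfrac{(2-\tau)\sigma}{2}\|r^{k+1}\|^2$ contribution extracted earlier, assemble precisely into $(1-\tau)\sigma\|r^{k+1}\|^2+\sigma\|\R(x^{k+1},y^k)\|^2$, which is the left-hand side of \eqref{ineq:rr}. Applying Lemma~\ref{lemma:4.1} exchanges this combination---augmented by $2\alpha\langle d_y^{k-1}-d_y^k,\Delta_y^k\rangle$---for the coercive quantities $\beta\sigma\|r^{k+1}\|^2$, $\tfrac{(1-\alpha)\sigma}{2}\|\Delta_x^k\|^2_{\A\A^*}$ and $\min\{\tau,1+\tau-\tau^2\}\alpha\sigma\|\Delta_y^k\|^2_{\B\B^*}$, together with the remaining telescoping increments $\hat\alpha\sigma(\|r^{k+1}\|^2-\|r^k\|^2)$ and $\alpha\|\Delta_y^k\|^2_{\hat\Sigma_g+\T}-\alpha\|\Delta_y^{k-1}\|^2_{\hat\Sigma_g+\T}$ that furnish the last two summands of $\phi_k-\phi_{k+1}$ defined by \eqref{def:phik}. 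Multiplying the master inequality by $2$ to match the coefficients in \eqref{ineq:mainb} and identifying the resulting quadratic pieces in $\Delta_x^k$ and $\Delta_y^k$ with $\|\Delta_x^k\|^2_\F+\|\Delta_y^k\|^2_\G$ through \eqref{def:FG} then yields the stated bound. The principal obstacle is the bookkeeping: every summand of $\phi_k$ must be matched with the correct telescoping contribution, and the $(1-\alpha)$ versus $\alpha$ split of the residual norms engineered in Lemma~\ref{lemma:4.1} must be carried through consistently so that the weights defining $\F$, $\G$ and $\beta$ emerge exactly as stated.
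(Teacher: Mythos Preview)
Your overall architecture is exactly that of the paper: extract the subgradient inclusions from Steps~1 and~2, pair them against the KKT inclusions at $\bar w$, convert the multiplier pairing into the $(\tau\sigma)^{-1}$-telescope plus $\tfrac{(2-\tau)\sigma}{2}\|r^{k+1}\|^2$, convert the semi-proximal pairings into the $\hat\Sigma_f+\S$ and $\hat\Sigma_g+\T$ telescopes, rewrite the cross term $\sigma\langle\B^*\Delta_y^k,\A^*x_e^{k+1}\rangle$ so that the combination $(1-\tau)\sigma\|r^{k+1}\|^2+\sigma\|\R(x^{k+1},y^k)\|^2$ appears, and then invoke Lemma~\ref{lemma:4.1}. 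All of that is right.

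The gap is in your treatment of the gradient differences. You propose to split
$\langle\nabla f(x^k)-\nabla f(\bar x),\,x_e^{k+1}\rangle
=\langle\P_x^{k+1}\Delta_x^k,\,x_e^{k+1}\rangle
+\langle\nabla f(x^{k+1})-\nabla f(\bar x),\,x_e^{k+1}\rangle$
and control the ``lagging piece'' by Cauchy--Schwarz together with $\P_x^{k+1}\preceq\hat\Sigma_f$. This does not close. Any Young/Cauchy--Schwarz bound on $\langle\P_x^{k+1}\Delta_x^k,x_e^{k+1}\rangle$ produces a term of the form $\pm\tfrac12\|x_e^{k+1}\|_{\P_x^{k+1}}^2$ (or $\pm\tfrac12\|x_e^{k+1}\|_{\hat\Sigma_f}^2$), and since $\P_x^{k+1}$ is $k$-dependent and is not comparable to $\Sigma_f$ from below, this extra term cannot be absorbed into the telescope $\|x_e^k\|_{\hat\Sigma_f+\S}^2-\|x_e^{k+1}\|_{\hat\Sigma_f+\S}^2$ carried by $\phi_k-\phi_{k+1}$, nor into the $\|x_e^{k+1}\|_{\Sigma_f}^2$ you obtain from strong monotonicity. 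In particular you cannot manufacture the exact weight $\tfrac12\Sigma_f$ in $\F$ this way when $\Sigma_f\prec\hat\Sigma_f$.

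The paper never introduces $\P_x^{k+1}$ in this proposition. It works at the \emph{function-value} level, anchored at $x^k$: from \eqref{ineq:convex-fg} with $x'=x^k$, $x=\bar x$ and from \eqref{ineq:majorize} with $x'=x^k$, $x=x^{k+1}$ one gets
$f(\bar x)-f(x^{k+1})+\langle\nabla f(x^k),x_e^{k+1}\rangle\ge\tfrac12\|x_e^k\|_{\Sigma_f}^2-\tfrac12\|\Delta_x^k\|_{\hat\Sigma_f}^2$,
and separately, using the KKT inclusion at $\bar x$ together with \eqref{ineq:convex-fg} at $x'=\bar x$,
$p(x^{k+1})+f(x^{k+1})-p(\bar x)-f(\bar x)+\langle\A\bar z,x_e^{k+1}\rangle\ge\tfrac12\|x_e^{k+1}\|_{\Sigma_f}^2$.
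Summing with the convexity inequality for $p$ at $x^{k+1}$ makes all function values cancel and leaves exactly
$\tfrac12\|x_e^k\|_{\Sigma_f}^2+\tfrac12\|x_e^{k+1}\|_{\Sigma_f}^2-\tfrac12\|\Delta_x^k\|_{\hat\Sigma_f}^2$.
Now $\tfrac12(\|x_e^k\|_{\Sigma_f}^2+\|x_e^{k+1}\|_{\Sigma_f}^2)\ge\tfrac14\|\Delta_x^k\|_{\Sigma_f}^2$, and the $-\tfrac12\|\Delta_x^k\|_{\hat\Sigma_f}^2$ cancels against the $\hat\Sigma_f$-part of the $(\hat\Sigma_f+\S)$-telescope, leaving precisely $\tfrac12\|\Delta_x^k\|_{\frac12\Sigma_f+\S}^2$. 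Replace your mean-value/Cauchy--Schwarz step by this function-value argument (and its $g$-analogue) and the rest of your sketch goes through verbatim.
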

\begin{proof}
For any given $(x,y,z)\in\W$, we define $x_e:=x-\bar x$, $y_e:=y-\bar y$ and $z_e:=z-\bar z$.
Note that
$$z^{k}+\sigma\R(x^{k+1},y^{k})
=\tilde z^{k+1}+\sigma\B^{*}(y^{k}-y^{k+1}).$$
Then, from Step 1 of the imsPADMM, we know that
\[
\label{inclu:subx}
d_{x}^{k}-\nabla f(x^{k})
-\A(\tilde z^{k+1}+\sigma\B^{*}\Delta_y^{k})
+(\hat\Sigma_{f}+\S)\Delta_x^{k}
\in
\partial p(x^{k+1}).
\]
Now the convexity of $p$ implies that
\[
\label{ineq:p(x)}
\begin{array}{l}
p(\bar x)
+\inprod{d_{x}^{k}-\nabla f(x^{k})
-\A(\tilde z^{k+1}+\sigma\B^{*}\Delta_y^{k})
+(\hat\Sigma_{f}+\S)\Delta_x^{k}}{x_e^{k+1}}
\ge
p(x^{k+1}).
\end{array}
\]
On the other hand, by using  \eqref{ineq:majorize} and \eqref{ineq:convex-fg}, we  have
$$
\begin{array}{l}
f(\bar x)-f({x}^k) + \inprod{ \nabla f({x}^k)}{{x}_e^k}
\geq
\frac{1}{2}\norm{{x}_e^k}_{\Sigma_f}^2,
\\[1mm]
f({x}^k) - f(x^{k+1}) -\inprod{ \nabla f({x}^k)}{\Delta_x^k}\geq
- \frac{1}{2}\norm{\Delta_x^k}_{\hat{\Sigma}_f}^2.
\end{array}
$$
Thus, summing up the above two inequalities together gives
\[
\label{ineq:f(x)}
\begin{array}{l}
f(\bar x)-f(x^{k+1})
+\inprod{\nabla f(x^k)}{x_e^{k+1}}
\ge
\frac{1}{2}\norm{x_e^{k}}^{2}_{\Sigma_{f}}
-\frac{1}{2}\norm{\Delta_x^{k}}_{\hat\Sigma_{f}}^{2}.
\end{array}
\]
By summing \eqref{ineq:p(x)} and \eqref{ineq:f(x)} together, we get
\[
\label{ineq:p(x)+f(x)}
\begin{array}{l}
p(\bar x)+f(\bar x)-p(x^{k+1})-f(x^{k+1})
-\inprod{\tilde z^{k+1}+\sigma\B^{*}\Delta_y^{k}}{\A^{*}x_e^{k+1}}
\\[1mm]
+\inprod{(\hat\Sigma_{f}+\S)\Delta_x^{k}}{x_e^{k+1}}
+\inprod{ d_{x}^{k}}{x_e^{k+1}}
\ge\frac{1}{2}(\norm{x_e^k}^{2}_{\Sigma_{f}}-\norm{\Delta_x^k}_{\hat\Sigma_{f}}^{2}).
\end{array}
\]
Applying a similar derivation, we also get that for any $ y\in\Y$,
\[
\label{ineq:q(y)+g(y)}
\begin{array}{l}
q(\bar y)+g(\bar y) -q(y^{k+1}) -g(y^{k+1})
-\inprod{\tilde z^{k+1}}{\B^{*}y_e^{k+1}}
\\[1mm]
+\inprod{(\hat\Sigma_{g}+\T)\Delta_y^{k}}{y_e^{k+1}}
+\inprod{ d_{y}^k}{y_e^{k+1}}
\;\ge\;
\frac{1}{2}\big(\norm{y_e^{k}}^{2}_{\Sigma_g}
-\norm{\Delta_y^{k}}_{\hat\Sigma_g}^{2}\big).
\end{array}
\]
By using  (\ref{eq:kkt}),  \eqref{ineq:convex-fg} and the convexity of the functions $f$, $g$, $p$ and $q$, we have
\begin{eqnarray}
\label{ineq:fgkp1}
\begin{array}{l}
p(x^{k+1})+f(x^{k+1})-p(\bar{x})-f(\bar x)
+\inprod{\A\bar{z}}{{x}^{k+1}_e}
\geq
\frac{1}{2}\norm{x_e^{k+1}}_{\Sigma_f}^2,
\\[1mm]
q(y^{k+1})+g(y^{k+1})-q(\bar y)-g(\bar y)
+\inprod{\B\bar{z}}{{y}^{k+1}_e}
\geq \frac{1}{2}\norm{y_e^{k+1}}_{\Sigma_g}^2.
\end{array}
\end{eqnarray}
Finally, by summing \eqref{ineq:p(x)+f(x)}, \eqref{ineq:q(y)+g(y)}, \eqref{ineq:fgkp1}  together, we  get
\[
\label{ineq:pq}
\begin{array}{l}
\inprod{ d_{x}^{k}}{x_e^{k+1}}+\inprod{ d_{y}^k}{y_e^{k+1}}
-\inprod{\tilde z_e^{k+1}}{r^{k+1}}
-\sigma\inprod{\B^{*}\Delta_y^{k}}{\A^*x_e^{k+1}}
\\[1mm]
+\inprod{\Delta_x^{k}}{x_e^{k+1}}_{\hat\Sigma_{f}+\S}
+\inprod{\Delta_y^{k}}{y_e^{k+1}}_{\hat\Sigma_{g}+\T}
+\frac{1}{2}(\norm{\Delta_x^k}_{\hat\Sigma_{f}}^{2}+\norm{\Delta_y^{k}}_{\hat\Sigma_g}^{2})
\\[2mm]
\ge
\frac{1}{2}(\norm{x_e^k}^{2}_{\Sigma_{f}}+\norm{y_e^{k}}^{2}_{\Sigma_g}
+\norm{x_e^{k+1}}_{\Sigma_f}^2
+\norm{y_e^{k+1}}_{\Sigma_g}^2)
\\[2mm]
\ge
\frac{1}{4}\norm{\Delta_x^k}^{2}_{\Sigma_f}
+\frac{1}{4}\norm{\Delta_y^k}^{2}_{\Sigma_g}.
\end{array}
\]
Next, we estimate the left-hand side of
\eqref{ineq:pq}.
By using \eqref{eq:triangle}, we have
\[
\label{eq:AxBy}
\begin{array}{ll}
\inprod{\B^{*}\Delta_y^{k}}{\A^*x_e^{k+1}}
=\inprod{\B^{*}y_e^{k}-\B^{*} y_e^{k+1}}{r^{k+1}-\B^*y_e^{k+1}}
\\[1mm]
=\inprod{\B^{*}y_e^{k}-\B^{*} y_e^{k+1}}{r^{k+1}}
-\frac{1}{2}\big(
\|\B^*y_e^{k}\|^{2}
-\|\B^{*}y_e^{k}-\B^{*} y_e^{k+1}\|^2
-\|\B^{*} y_e^{k+1}\|^2
\big)
\\[1mm]
=\frac{1}{2}\big(\|\B^*y_e^{k+1}\|^{2}
+\|\R(x^{k+1},y^{k})\|^{2}
-\|\B^*y_e^{k}\|^{2}
-\|r^{k+1}\|^{2}\big).
\end{array}
\]
Also, from \eqref{eq:triangle} we know that
\begin{eqnarray}
\label{eq:xxyy1}
\begin{array}{l}
\langle x_e^{k+1},\Delta_x^{k}\rangle_{\hat\Sigma_{f}+\S}
=
\frac{1}{2}(\|x_e^{k}\|_{\hat\Sigma_{f}+\S}^{2}
-\|x_e^{k+1}\|_{\hat\Sigma_{f}+\S}^{2})
-\frac{1}{2}\|\Delta_x^{k}\|_{\hat\Sigma_{f}+\S}^{2},
\\[2mm]
\langle y_e^{k+1},\Delta_y^{k}\rangle_{\hat\Sigma_{g}+\T}
=\frac{1}{2}(
\|y_e^{k}\|_{\hat\Sigma_{g}+\T}^{2}
-\|y_e^{k+1}\|_{\hat\Sigma_{g}+\T}^{2})
-\frac{1}{2}\|\Delta_y^{k}\|_{\hat\Sigma_{g}+\T}^{2}\, .
\end{array}
\end{eqnarray}
Moreover, by using the definition of $\{\tilde z^k\}$ and \eqref{eq:triangle} we know that
\[
\label{eq:rz}
\begin{array}{l}
\langle r^{k+1},\tilde z_e^{k+1}\rangle
=\inprod{r^{k+1}}{z_e^{k}+\sigma r^{k+1}}
=
\frac{1}{\tau\sigma}\langle z^{k+1}-z^{k},z_e^{k}\rangle
+\sigma\|r^{k+1}\|^{2}\\[1mm]
=
\frac{1}{2\tau\sigma}\big(
\|z^{k+1}_e\|^{2}
-\|z^{k+1}-z^{k}\|^{2}
-\|z_e^{k}\|^{2}
\big)
+\sigma\|r^{k+1}\|^{2}\\[1mm]
=
\frac{1}{2\tau\sigma}\big(
\|z^{k+1}_e\|^{2}-\|z_e^{k}\|^{2}
\big)
+\frac{(2-\tau)\sigma}{2}\|r^{k+1}\|^{2}.
\end{array}
\]
Thus, by using \eqref{eq:AxBy}, \eqref{eq:xxyy1} and \eqref{eq:rz}  in \eqref{ineq:pq}, we obtain that
\[
\label{ineq:p(x)+q(y)}
\begin{array}{l}
\inprod{ d_{x}^{k}}{x_e^{k+1}}+\inprod{ d_{y}^k}{y_e^{k+1}}
+\frac{1}{2\tau\sigma}
(\|z_e^{k}\|^{2}-\|z^{k+1}_e\|^{2})
+\frac{\sigma}{2}(\|\B^*y_e^{k}\|^{2}-\|\B^*y_e^{k+1}\|^{2})
\\[2mm]
+\frac{1}{2}
(\|x_e^{k}\|_{\hat\Sigma_{f}+\S}^{2}+\|y_e^{k}\|_{\hat\Sigma_{g}+\T}^{2})
-\frac{1}{2}(\|x_e^{k+1}\|_{\hat\Sigma_{f}+\S}^{2}+\|y_e^{k+1}\|_{\hat\Sigma_{g}+\T}^{2})
\\[2mm]
\ge
\frac{1}{2}\norm{\Delta_x^k}^{2}_{\frac{1}{2}\Sigma_f+\S}
+\frac{1}{2}\norm{\Delta_y^k}^{2}_{\frac{1}{2}\Sigma_g+\T}
+\frac{\sigma}{2}\|\R(x^{k+1},y^{k})\|^{2}
+\frac{(1-\tau)\sigma}{2}\|r^{k+1}\|^{2}.
\end{array}
\]
Note that for any $y\in\Y$, $\R(\bar x,y)=\B^{*}y_{e}$.
Therefore, by applying \eqref{ineq:rr} to the right hand side of \eqref{ineq:p(x)+q(y)} and using \eqref{def:phik} together with \eqref{def:FG}, we know that \eqref{ineq:mainb} holds for $k\ge 1$ and this completes the proof.
\qed
\end{proof}
Now, we are ready to present the convergence theorem of the imsPADMM.
\begin{theorem}
\label{theorem:convergence}
Suppose that the solution set $\overline\W$ to the KKT system \eqref{eq:kkt} of problem \eqref{problem:primal} is nonempty and $\{w^k\}$ is generated by the imsPADMM.
Assume that
\[
\label{ineq:succ0}
\begin{array}{l}
\Sigma_{f}+\S+\sigma\A\A^{*}\succ0
\quad
\mbox{and}
\quad
\Sigma_{g}+\T+\sigma\B\B^{*}\succ0.
\end{array}
\]
Then, the linear operators $\F$ and $\G$ defined in \eqref{def:FG} are positive definite.
Moreover, the sequence $\{w^{k}\}$ converges to a point in $\overline\W$.
\end{theorem}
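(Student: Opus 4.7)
The plan is to split the theorem into three phases: positive definiteness of $\F,\G$, quasi-Fej\'er monotonicity of a Lyapunov sequence, and subsequence-to-whole-sequence convergence.

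First I would verify $\F\succ 0$ by a null-space argument: if $\langle v,\F v\rangle=0$, then each nonnegative summand $\tfrac12\|v\|_{\Sigma_f}^2$, $\|v\|_{\S}^2$, $\tfrac{(1-\alpha)\sigma}{2}\|\A^*v\|^2$ vanishes, so $\Sigma_f^{1/2}v=\S^{1/2}v=0$ and $\A^*v=0$; substituting into \eqref{ineq:succ0} forces $v=0$. The positivity of the coefficient $(1-\alpha)/2$ uses $\alpha\in(0,1)$ as noted after \eqref{def:beta}. The same argument works for $\G$ once one checks that $\min\{\tau,1+\tau-\tau^2\}\alpha>0$, which holds because $\tau\in(0,(1+\sqrt5)/2)$ keeps $1+\tau-\tau^2>0$ (the positive root of $\tau^2-\tau-1$ being exactly $(1+\sqrt5)/2$). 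Note also that \eqref{ineq:succ0} combined with $\Sigma_f\preceq\hat\Sigma_f$ and $\Sigma_g\preceq\hat\Sigma_g$ gives $\M,\N\succ 0$, so Proposition~\ref{prop:error} applies and yields the bounds $\|\M^{-1/2}d_x^k\|\le\varepsilon_k$, $\|\N^{-1/2}d_y^k\|\le(1+\sigma\|\N^{-1/2}\B\A^*\M^{-1/2}\|)\varepsilon_k$.

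For convergence, I would work from \eqref{ineq:mainb} in Proposition~\ref{prop:inequality}, rearranged as
$$
\phi_{k+1}(\bar w)+\|\Delta_x^k\|_{\F}^2+\|\Delta_y^k\|_{\G}^2+\beta\sigma\|r^{k+1}\|^2\le\phi_k(\bar w)+2\inprod{d_x^k}{x^{k+1}-\bar x}+2\inprod{d_y^k}{y^{k+1}-\bar y}-2\alpha\inprod{d_y^k-d_y^{k-1}}{\Delta_y^k},
$$
and estimate the three error inner products. For the first, Cauchy--Schwarz gives $|\inprod{d_x^k}{x^{k+1}-\bar x}|\le\varepsilon_k\|x^{k+1}-\bar x\|_{\M}$. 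The crucial point is that the $\A\A^*$-part of $\|x^{k+1}-\bar x\|_{\M}^2$ can be controlled by quantities already living inside $\phi_{k+1}(\bar w)$, since $\|\A^*(x^{k+1}-\bar x)\|^2\le 2\|r^{k+1}\|^2+2\|\B^*(y^{k+1}-\bar y)\|^2=2\|r^{k+1}\|^2+2\|\R(\bar x,y^{k+1})\|^2$. Combining with the $\|x-x^{k+1}\|_{\hat\Sigma_f+\S}^2$ block of $\phi_{k+1}$, one obtains $\|x^{k+1}-\bar x\|_{\M}\le C_1\sqrt{\phi_{k+1}(\bar w)}$ for some constant $C_1>0$, and similarly for the $y$-term. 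The cross term $2\alpha\inprod{d_y^k-d_y^{k-1}}{\Delta_y^k}$ is handled with Young's inequality, absorbing $\eta\|\Delta_y^k\|_{\N}^2$ into a fraction of $\|\Delta_y^k\|_{\G}^2$ (possible because $\G\succ 0$) and dumping $\eta^{-1}\varepsilon_k^2$ into a summable remainder, plus a telescoping decomposition between indices $k-1$ and $k$. The resulting scalar recursion has the shape
$$
\phi_{k+1}(\bar w)\le\phi_k(\bar w)+C_2\varepsilon_k\sqrt{\phi_{k+1}(\bar w)}+C_3\varepsilon_k^2-\tfrac12(\|\Delta_x^k\|_{\F}^2+\|\Delta_y^k\|_{\G}^2+\beta\sigma\|r^{k+1}\|^2),
$$
and the standard square-root trick converts it into $\sqrt{\phi_{k+1}(\bar w)}\le\sqrt{\phi_k(\bar w)}+C_4\varepsilon_k$. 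Lemma~\ref{lemma:sq-sum} then gives convergence of $\{\phi_k(\bar w)\}$, boundedness of $\{w^k\}$, and summability of $\|\Delta_x^k\|_{\F}^2+\|\Delta_y^k\|_{\G}^2+\|r^{k+1}\|^2$.

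Summability and $\F,\G\succ 0$ force $\Delta_x^k\to 0$, $\Delta_y^k\to 0$, $r^{k+1}\to 0$. Boundedness produces a subsequence $w^{k_j}\to w^{\infty}$; the error bounds $\|\M^{-1/2}d_x^{k_j}\|,\|\N^{-1/2}d_y^{k_j}\|\to 0$ let me pass to the limit in the inclusions \eqref{inclu:subx} and \eqref{opt:y}, using the closedness of $\gph\,\partial p$ and $\gph\,\partial q$, and the equality $r^{k+1}\to 0$ handles the primal feasibility. This gives $w^{\infty}\in\overline\W$. Choosing $\bar w:=w^{\infty}$ in the quasi-Fej\'er inequality, $\{\phi_k(w^{\infty})\}$ converges; since the subsequence $\phi_{k_j}(w^{\infty})\to 0$, the entire sequence tends to $0$, which together with the remaining positive terms in $\phi_k(w^{\infty})$ (namely $\tfrac{1}{\tau\sigma}\|z^k-z^{\infty}\|^2$ plus $\|\hat\Sigma_f+\S\|$- and $\|\hat\Sigma_g+\T\|$-norms of the gaps) forces $w^k\to w^{\infty}$. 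The hardest step in practice is the book-keeping in the second paragraph: getting the coefficients right so that the residual $\|\Delta_x^k\|_{\F}^2+\|\Delta_y^k\|_{\G}^2+\beta\sigma\|r^{k+1}\|^2$ remains strictly positive after absorbing the Young-inequality slack coming from the $2\alpha\inprod{d_y^k-d_y^{k-1}}{\Delta_y^k}$ term.
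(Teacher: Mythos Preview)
Your proposal is correct and follows the same three-phase architecture as the paper (positive definiteness of $\F,\G$; quasi-Fej\'er monotonicity of $\phi_k$; subsequence extraction and upgrade to full convergence), but the mechanics of the quasi-Fej\'er step are genuinely different.

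The paper does not work directly with inequality \eqref{ineq:mainb} at the inexact iterates. Instead it re-derives that inequality with $x^{k+1},y^{k+1}$ replaced by the \emph{exact} minimizers $\bar x^{k+1},\bar y^{k+1}$ of \eqref{barxplus}--\eqref{baryplus}; for these the step-$k$ errors $d_x^k,d_y^k$ vanish, leaving only $d_y^{k-1}$ in a completed-square term. Writing $\|\xi^k\|^2:=\phi_k(\bar w)$ and $\|\bar\xi^k\|^2:=\bar\phi_k(\bar w)$, this yields $\|\bar\xi^{k+1}\|\le\|\xi^k\|+\alpha\|\G^{-1/2}d_y^{k-1}\|$. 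The gap $\|\bar\xi^{k+1}-\xi^{k+1}\|$ is then bounded by $\varrho\varepsilon_k$ via Proposition~\ref{prop:error}, giving the recursion $\|\xi^{k+1}\|\le\|\xi^k\|+\varrho\varepsilon_k+\|\G^{-1/2}\N^{1/2}\|\varepsilon_{k-1}$ directly, with no square-root trick.

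Your route stays with the inexact iterates throughout, bounds $\langle d_x^k,x^{k+1}-\bar x\rangle$ and $\langle d_y^k,y^{k+1}-\bar y\rangle$ by Cauchy--Schwarz against $\sqrt{\phi_{k+1}}$, and handles the cross term by Young's inequality. This avoids the auxiliary objects $\bar x^{k+1},\bar y^{k+1},\bar\phi_k,\bar\xi^k$ entirely, at the cost of the bookkeeping you correctly flag as the hardest step. The paper's detour through the exact iterates sidesteps that difficulty: no Young-type absorption into $\|\Delta_y^k\|_\G^2$ is needed, since the residual already appears as the nonnegative completed square $\|\bar y^{k+1}-y^k+\alpha\G^{-1}d_y^{k-1}\|_\G^2$. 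For the positive definiteness of $\F$ and $\G$ the paper gives explicit convex decompositions (e.g.\ $\F=\tfrac{1-\alpha}{2}(\Sigma_f+\S+\sigma\A\A^*)+\tfrac{\alpha}{2}\Sigma_f+\tfrac{1+\alpha}{2}\S$) rather than your null-space argument; the decompositions are slightly more quantitative but both arguments are valid.

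One small slip: the bounds $\|\M^{-1/2}d_x^k\|\le\varepsilon_k$ and $\|\N^{-1/2}d_y^k\|\le\varepsilon_k$ are built into Steps~1--2 of the algorithm itself, not consequences of Proposition~\ref{prop:error}; that proposition bounds $\|x^{k+1}-\bar x^{k+1}\|_\M$ and $\|y^{k+1}-\bar y^{k+1}\|_\N$ instead.
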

\begin{proof}
Denote $\rho := \min (\tau, 1+\tau -\tau^2) \in (0,1]$. Since  $\alpha>0$, from  the definitions of $\F$ and $\G$, we know that
$$
\begin{array}{l}
 \F = \frac{(1-\alpha)}{2} \big( \Sigma_f + \S + \sigma\A\A^*\big)
+ \frac{\alpha}{2} \Sigma_f + \frac{1+\alpha}{2} \S  \succ 0,
\\[1mm]
\G = \frac{\rho\alpha}{2}\big( \Sigma_g + \T + \sigma\B\B^*\big) +
\frac{1-\rho\alpha}{2}\Sigma_g + \frac{2-\rho\alpha}{2}\T + \frac{\rho\alpha}{2}\sigma\B\B^* \succ 0 .
\end{array}
$$
Now we start to prove the convergence of the sequence $\{w^{k}\}$.
We first show that  this sequence is bounded.
Let $\bar w:=(\bar x,\bar y,\bar z)$ be an arbitrary vector in $\overline\W$.
For any given $(x,y,z)\in\W$, we define $x_e:=x-\bar x$, $y_e:=y-\bar y$ and $z_e:=z-\bar z$.
Since $\G\succ 0$, it holds that
\[
\label{eq:deltay}
\norm{\Delta_{}y^{k}}^{2}_{\G}
+2\alpha\inprod{ d_y^k- d_y^{k-1}}{\Delta_y^k}
= \norm{\Delta_y^k+\alpha\G^{-1}(d^k_y-d^{k-1}_y)}_\G^2-\alpha^2\norm{d^k_y-d^{k-1}_y}_{\G^{-1}}^2.
\]
By substituting $\bar x^{k+1}$ and $\bar y^{k+1}$ for $x^{k+1}$ and $y^{k+1}$ in \eqref{ineq:mainb}, we obtain that
\begin{equation}
\label{ineq:contraction2}
\begin{array}{l}
\phi_{k}(\bar w)
-\bar \phi_{k+1}(\bar w)
+\alpha^{2}\|d_y^{k-1}\|^{2}_{\G^{-1}}
\\[1mm]
\ge
\|\bar x^{k+1}-x^{k}\|^{2}_{\F}
+\beta\sigma\|\bar r^{k+1}\|^{2}
+\|\bar y^{k+1}-y^{k}+\alpha\G^{-1}d_y^{k-1}\|_{\G}^{2}.
\end{array}
\end{equation}
Define the sequences $\{\xi^{k}\}$ and $\{\bar \xi^{k}\}$
 in $\Z\times\X\times\Y\times\Z\times\Y$
 for $k\ge 1$ by
\[
\label{def:xi}
\left\{
\begin{array}{l}
\xi^{k}:=\big(
\sqrt{\tau\sigma}
z^{k}_{e},
(\hat\Sigma_{f}+\S)^{\frac{1}{2}}x^{k}_{e},
\N^{\frac{1}{2}}y^{k}_{e},
\sqrt{\hat\alpha\sigma}r^{k},
\sqrt{\alpha}(\hat\Sigma_g+\T)^{\frac{1}{2}}(\Delta_y^{k-1})
\big),
\\[1mm]
\bar\xi^{k}:=\big(
\sqrt{\tau\sigma}\bar z^{k}_{e},
(\hat\Sigma_{f}+\S)^{\frac{1}{2}}\bar x^{k}_{e},\N^{\frac{1}{2}}\bar y^{k}_e,
\sqrt{\hat\alpha\sigma}\bar r^{k},
\sqrt{\alpha}(\hat\Sigma_g+\T)^{\frac{1}{2}}(y^{k-1}-\bar y^{k})
\big).
\end{array}
\right.
\]
Obviously we have $\|\xi_{k}\|^{2}=\phi_{k}(\bar w)$ and $\|\bar \xi_{k}\|^{2}=\bar \phi_{k}(\bar w)$, which, together with
\eqref{ineq:contraction2} implies that
$
\|\bar\xi^{k+1}\|^{2}
\le
\|\xi^{k}\|^{2}
+\alpha^{2}\|\G^{-\frac{1}{2}} d_y^{k-1}\|^{2}
$.
As a result, it holds that
$\|\bar\xi^{k+1}\|
\le\|\xi^{k}\|+\alpha\|\G^{-\frac{1}{2}} d_y^{k-1}\|$.
Therefore, we have that
\[
\label{ineq:xik+1}
\|\xi^{k+1}\|
\le
\|\xi^{k}\|
+\alpha\|\G^{-\frac{1}{2}} d_y^{k-1}\|+
\|\bar\xi^{k+1}-\xi^{k+1}\|.
\]
Next, we estimate $\|\bar\xi^{k+1}-\xi^{k+1}\|$ in \eqref{ineq:xik+1}.
Since $\hat{\alpha} + \tau \in [1,2]$, it holds that
$$
\begin{array}{l}
\frac{1}{\tau\sigma}\norm{\Lambda_z^{k+1}}^2
+{\hat\alpha\sigma}\norm{\bar r^{k+1}-r^{k+1}}^2
=(\tau+\hat{\alpha})\sigma\norm{\bar r^{k+1}-r^{k+1}}^2
\\[1mm]
\le
2\sigma
\norm{\A^*\Lambda_x^{k+1}+\B^*\Lambda_y^{k+1}}^2
\le
4\norm{\Lambda_x^{k+1}}_{\sigma\A\A^*}^{2}
+4\norm{\Lambda_y^{k+1}}_{\sigma\B\B^{*}}^2,
\end{array}
$$
which, together with Proposition  \ref{prop:error}, implies that
\[
\label{ineq:xixi}
\begin{array}{l}
\norm{\bar \xi^{k+1}-\xi^{k+1}}^2
\\[1mm]
\le
\norm{\Lambda_x^{k+1}}^2_{\hat\Sigma_{f}+\S}
+\norm{\Lambda_y^{k+1}}^2_{\N}
+\norm{\Lambda_y^{k+1}}^2_{\hat\Sigma_g+\T}
+4\norm{\Lambda_x^{k+1}}_{\sigma\A\A^*}^{2}
+4\norm{\Lambda_y^{k+1}}_{\sigma\B\B^{*}}^2
\\[2mm]
\leq
5(\norm{\Lambda_x^{k+1}}_{\M}^{2}
+\norm{\Lambda_{y}^{k+1}}_{\N}^{2})
\le
\varrho^2 \varepsilon_k^{2},
\end{array}
\]
where $\varrho$ is a constant defined by
\[
\label{def:varrho}
\varrho:=\sqrt{5(1+(1+\sigma\|\N^{-\frac{1}{2}}\B\A^{*}\M^{-\frac{1}{2}}\|)^2)}.
\]
On the other hand, from Proposition  \ref{prop:error}, we know  $\norm{\G^{-\frac{1}{2}} d_y^{k}}\le\norm{\G^{-\frac{1}{2}}\N^{\frac{1}{2}}} \varepsilon_{k}$.
By using this fact together with \eqref{ineq:xik+1} and \eqref{ineq:xixi}, we have
\[
\label{ineq:xibound}
\begin{array}{rl}
\norm{\xi^{k+1}}
\le \norm{\xi^k}
+\varrho\varepsilon_{k}
+\norm{\G^{-\frac{1}{2}}\N^{\frac{1}{2}}}\varepsilon_{k-1}
\le\|\xi^{1}\|+\big(\varrho+ \norm{\G^{-\frac{1}{2}}\N^{\frac{1}{2}}}\big)\E,
\end{array}
\]
which implies that the sequence $\{\xi^{k}\}$ is bounded. Then, by \eqref{ineq:xixi} we know that the sequence $\{\bar \xi^{k}\}$  is also bounded.
From the definition of  $\xi^{k}$ we know that the sequences $\{y^{k}\}$, $\{z^{k}\}$, $\{ r^k\}$ and $\{(\hat\Sigma_{f}+\S)^{\frac{1}{2}}x^{k}\}$ are bounded.
Thus, by the definition of $r^k$, we know that the sequence $\{Ax^k\}$ is also bounded, which  together with the definition of $\M$ and the fact that $\M\succ 0$, implies that $\{x^k\}$ is bounded.

By \eqref{ineq:contraction2}, \eqref{ineq:xixi} and \eqref{ineq:xibound} we have that
\[
\label{ineq:summable}
\begin{array}{l}
\sum_{k=1}^\infty
\big(
\norm{\bar{x}^{k+1}-x^{k}}^{2}_{\F}
+\beta\sigma\norm{\bar r^{k+1}}^{2}
+\norm{\bar{y}^{k+1}-y^{k}+\alpha\G^{-1}d_y^{k-1}}_{\G}^{2}\big)
\\[1mm]
\leq
\sum_{k=1}^\infty \big(\phi_{k}(\bar w) - \phi_{k+1}(\bar w)
+\phi_{k+1}(\bar w) -\bar \phi_{k+1}(\bar w)
+\alpha^{2}\|d_y^{k-1}\|^{2}_{\G^{-1}}\big)
\\[1mm]
\leq
\phi_{1}(\bar w)
+\mbox{$\sum_{k=1}^\infty$} \norm{\xi^{k+1}-\bar{\xi}^{k+1}}
(\norm{\xi^{k+1}}+\norm{\bar{\xi}^{k+1}})
+\norm{\G^{-\frac{1}{2}}\N^{\frac{1}{2}}}^2\E'
\\[1mm]
\leq
\phi_{1}(\bar w)
+\norm{\G^{-\frac{1}{2}}\N^{\frac{1}{2}}}^2\E'
+\varrho \mbox{$\max\limits_{k\geq 1}$}\{ \norm{\xi^{k+1}}+\norm{\bar{\xi}^{k+1}}\}\E
<\infty,
\end{array}
\]
where we have used the fact that $\phi_{k}(\bar w) -\bar \phi_{k}(\bar w)
\leq \norm{\xi^{k}-\bar{\xi}^{k}}(\norm{\xi^{k}}+\norm{\bar{\xi}^{k}})$.
Thus, by \eqref{ineq:summable} we know that
$\{\norm{\bar{x}^{k+1}-x^{k}}^{2}_{\F}\}\to0$,
$\{\norm{\bar{y}^{k+1}-y^{k}+\alpha\G^{-1}d_y^{k-1}}_{\G}^2\}\to0$,
and
$\{\norm{\bar r^{k+1}}^{2}\}\to0$ as $k\to\infty$.
Since we have $\F\succ 0$ and $\G\succ 0$ by \eqref{ineq:succ0}, $\{\bar{x}^{k+1}-x^{k}\}\to 0$, $\{\bar{y}^{k+1}-y^{k}\}\to 0$
and $\{\bar{r}^{k}\}\to 0$ as $k\to\infty$.
Also, since $\M\succ 0$ and $\N\succ 0$, by Proposition \ref{prop:error} we know that
$\{\bar{x}^{k}-x^{k}\}\to 0$ and $\{\bar{y}^{k}-y^{k}\}\to 0$ as $k\to\infty$.
As a result, it holds that
$\{\Delta_x^{k}\}\to0$,
$\{\Delta_y^{k}\}\to 0$,
and $\{r^{k}\}\to 0$ as $k\to\infty$.
Note that the sequence $\{(x^{k+1},y^{k+1},z^{k+1})\}$ is bounded. Thus, it has a convergent subsequence  $\{(x^{k_{i}+1},y^{k_{i}+1},z^{k_{i}+1})\}$ which converges to a point, say
$(x^{\infty},y^{\infty},z^{\infty})$.
We define two nonlinear mappings $F:\W\to\X$ and $G:\W\to\Z$ by
\[
\label{def:mapfg}
F(w):=\partial p(x)+\nabla f(x)+\A z,\  G(w):=\partial q(y)+\nabla g(y)+\B z, \forall w\in\W.
\]
From \eqref{inclu:subx}, \eqref{opt:y} and \eqref{eq:meanvalue-y} we know that in the imsPADMM, for any $k\ge 1$, it holds that
\[
\left\{
\label{opt:2sub}
\begin{array}{l}
d_{x}^{k}-\P_x^{k+1}\Delta_x^k
+(\hat\Sigma_{f}+\S)\Delta_x^k
+(\tau-1)\sigma\A r^{k+1}
-\sigma\A\B^{*}\Delta_y^{k}\in F(w^{k+1}),
\\[1mm]
d_{y}^k
-\P^{k+1}_y\Delta_y^k
+(\hat\Sigma_g+\T)\Delta_y^k
+(\tau-1)\sigma\B r^{k+1}
\in G(w^{k+1}).
\end{array}
\right.
\]
Thus by taking limits along $\{k_i\}$ as $i\to\infty$ in \eqref{opt:2sub}, we know that
$$
\begin{array}{l}
0\in\partial p(x^\infty)+\nabla f(x^\infty)+\A z^\infty,
\quad\mbox{and}\quad
0\in\partial q(y^\infty)+\nabla g(y^\infty)+\B z^\infty,
\end{array}
$$
which together with the fact that $\lim_{k\to\infty}r^k= 0$ implies that $(x^\infty, y^\infty, z^\infty)\in\overline\W$.
Hence, $(x^{\infty},y^{\infty})$ is a solution to the problem \eqref{problem:primal} and $z^{\infty}$ is a solution to the dual of problem \eqref{problem:primal}.

By \eqref{ineq:xibound} and Lemma \ref{lemma:sq-sum}, we know that the sequence $\{\|\xi^k\|\}$ is convergent. We can let $\bar w=(x^\infty, y^\infty, z^\infty)$ in all the previous discussions. Hence, $\lim_{k\to \infty} \xi^k = 0$.
Thus, from the definition of $\{\xi^k\}$ we know that
$\lim_{k\to\infty}z^{k} =z^\infty$,
$\lim_{k\to\infty}y^{k} =y^\infty$
and
$\lim_{k\to\infty} (\hat\Sigma_f+\S)x^{k} =(\hat\Sigma_f+\S)x^\infty$.
Obviously, since $\lim_{k\to \infty} r^k=0$, it holds that $\{\A^*x^k\}\to \A^*x^\infty$ as $k\to\infty$. Finally, we get
$\lim_{k\to\infty}x^{k}=x^{\infty}$ by the definition of $\M$ and the fact that $\M\succ 0$.
This completes the proof.
\qed
\end{proof}

\section{Non-Ergodic  Iteration Complexity}
\label{convergence-rate}
In this section we establish an iteration complexity result in the non-ergodic sense
for the imsPADMM. The definitions and notations in Sec. \ref{convergence} also apply to this section.
We define the function $D:\W\to[0,\infty)$ by
$$
\label{def:D}
D(w):=
{\dist}^{2}(0,F(w))
+
{\dist}^{2}(0,G(w))
+
\|\R(x,y)\|^{2},
\
\forall w=(x,y,z)\in\W,
$$
where $F$ and $G$ are defined in \eqref{def:mapfg}.
We say that ${w}\in\W$ is an $\epsilon$-approximation solution of problem \eqref{problem:primal} if $D({w})\le\epsilon$.
Our iteration complexity result is established based on the KKT optimality condition
in the sense that we can find a point ${w}\in\W$ such that $D({w})\le o(1/k)$
after $k$ steps.
The following lemma will be needed later.

\begin{lemma}
\label{lemma:sequence}
If $\{a_{i}\}$ is a nonnegative sequence  satisfying
$\sum_{i=0}^{\infty}a_{i}=\bar a$,
then we have
$\min\limits_{1\le i\le k}\{a_{i}\}\le \bar a/k$
{and}
$\lim\limits_{k\to\infty}\big\{k\cdot\min\limits_{1\le i\le k}\{a_{i}\}\big\}=0$.
\end{lemma}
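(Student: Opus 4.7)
The plan is to establish both assertions directly from the definition of the minimum together with the summability hypothesis, treating the two statements in sequence and reusing the bookkeeping.

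For the first inequality, I would use the elementary observation that since every $a_i \ge 0$, the pointwise bound $\min_{1 \le i \le k}\{a_i\} \le a_j$ holds for each $j \in \{1,\ldots,k\}$. Summing these $k$ inequalities yields
\[
k \cdot \min_{1 \le i \le k}\{a_i\} \;\le\; \sum_{j=1}^{k} a_j \;\le\; \sum_{i=0}^{\infty} a_i \;=\; \bar a,
\]
which after dividing by $k$ gives the claimed bound. This is the routine half.

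For the second assertion, the quantity $b_k := \min_{1 \le i \le k}\{a_i\}$ is nonnegative and nonincreasing in $k$, but merely applying part one gives $k b_k \le \bar a$, which is not enough. Instead, I would exploit the tail-summability: given any $\epsilon > 0$, choose $N = N(\epsilon)$ so that $\sum_{i=N+1}^{\infty} a_i < \epsilon$. Then for all $k > N$, since $b_k \le \min_{N+1 \le i \le k}\{a_i\} \le a_i$ for every $i \in \{N+1,\ldots,k\}$, summing yields
\[
(k-N)\, b_k \;\le\; \sum_{i=N+1}^{k} a_i \;<\; \epsilon,
\]
so that $k b_k < k\epsilon/(k-N)$. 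Letting $k \to \infty$ gives $\limsup_{k\to\infty} k b_k \le \epsilon$, and since $\epsilon > 0$ was arbitrary, $\lim_{k\to\infty} k b_k = 0$.

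There is no substantive obstacle here: the only subtle point is recognising that part one on its own cannot imply part two, and that one must truncate the sequence to exploit the fact that the tail of a convergent series must be eventually small. Everything else is bookkeeping with nonnegative sums.
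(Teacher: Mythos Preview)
Your proof is correct. Both parts are handled cleanly: the averaging argument for the first inequality and the tail-truncation argument for the $o(1/k)$ claim are standard and valid. The paper itself states this lemma without proof, treating it as an elementary fact, so there is no alternative argument to compare against.
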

Now we establish a non-ergodic iteration complexity for the imsPADMM.
\begin{theorem}
\label{non-ergodic-theorem}
Suppose that all the assumptions of Theorem 1 hold and  the convergent sequence $\{w^k\}$
generated by the imsPADMM converges to the limit $\bar w:=(\bar x,\bar y,\bar z)$.
Then, there exists a constant $\omega>0$  such that
\begin{eqnarray}
\label{ineq:nec-r1}
\min_{1\le i\le k}\big\{D(w^{i+1})\big\}\le \omega/k,\
\mbox{and}
\
\lim_{k\to\infty}\big\{ k\min_{1\le i\le k}\{D(w^{i+1})\}\big\}=0.
\end{eqnarray}
\end{theorem}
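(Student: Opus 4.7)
\medskip
\noindent\textbf{Proof proposal.} The plan is to show that $\sum_{k\ge 1} D(w^{k+1}) < +\infty$, after which both assertions follow directly from Lemma~\ref{lemma:sequence} applied to $a_i := D(w^{i+1})$, with $\omega$ taken as the value of this sum.

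First, I would use the two inclusions in \eqref{opt:2sub}, which provide \emph{explicit} members of $F(w^{k+1})$ and $G(w^{k+1})$ written as linear combinations of $d_x^k$, $d_y^k$, $\Delta_x^k$, $\Delta_y^k$, and $r^{k+1}$. Taking norms, using $0\preceq \P_x^{k+1}\preceq \hat\Sigma_f$ and $0\preceq \P_y^{k+1}\preceq \hat\Sigma_g$, and squaring, one obtains
\[
\dist^{2}(0,F(w^{k+1}))+\dist^{2}(0,G(w^{k+1}))
\le C_1\bigl(\|d_x^k\|^2+\|d_y^k\|^2+\|\Delta_x^k\|^2+\|\Delta_y^k\|^2+\|r^{k+1}\|^2\bigr),
\]
where $C_1>0$ depends only on $\sigma$, $\tau$, and on the operator norms of $\A$, $\B$, $\hat\Sigma_f$, $\hat\Sigma_g$, $\S$, $\T$. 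Adding $\|\R(x^{k+1},y^{k+1})\|^2=\|r^{k+1}\|^2$ gives a bound $D(w^{k+1})\le C_0\bigl(\|d_x^k\|^2+\|d_y^k\|^2+\|\Delta_x^k\|^2+\|\Delta_y^k\|^2+\|r^{k+1}\|^2\bigr)$ for a suitable $C_0>0$.

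Second, I would show that each of these five sequences is square-summable. The tolerance conditions $\|\M^{-1/2} d_x^k\|\le \varepsilon_k$ and $\|\N^{-1/2} d_y^k\|\le \varepsilon_k$, combined with $\sum\varepsilon_k^2=\E'<\infty$, immediately give $\sum\|d_x^k\|^2,\sum\|d_y^k\|^2<\infty$ thanks to $\M,\N\succ 0$. For the remaining three terms, I would invoke the summability inequality \eqref{ineq:summable} already established in the proof of Theorem~\ref{theorem:convergence}, which furnishes $\sum\|\bar x^{k+1}-x^k\|_\F^{\,2}<\infty$, $\sum\|\bar r^{k+1}\|^2<\infty$, and $\sum\|\bar y^{k+1}-y^k+\alpha\G^{-1}d_y^{k-1}\|_\G^{\,2}<\infty$. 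Combining these with the perturbation estimates from Proposition~\ref{prop:error} and with $\|\G^{-1/2}d_y^{k-1}\|\le \|\G^{-1/2}\N^{1/2}\|\varepsilon_{k-1}$ (so $\alpha\G^{-1}d_y^{k-1}$ is itself square-summable), then applying the triangle inequality $\|a+b\|^2\le 2\|a\|^2+2\|b\|^2$ and using $\F,\G\succ 0$ from Theorem~\ref{theorem:convergence}, yields
\[
\sum_{k\ge 1}\bigl(\|\Delta_x^k\|^2+\|\Delta_y^k\|^2+\|r^{k+1}\|^2\bigr)<\infty.
\]
For the $r^{k+1}$ piece one splits $r^{k+1}=\bar r^{k+1}+\A^*(x^{k+1}-\bar x^{k+1})+\B^*(y^{k+1}-\bar y^{k+1})$, and for $\Delta_y^k$ one writes $\bar y^{k+1}-y^k=(\bar y^{k+1}-y^k+\alpha\G^{-1}d_y^{k-1})-\alpha\G^{-1}d_y^{k-1}$ before applying the triangle inequality.

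Putting the two steps together, $\omega:=C_0\sum_{k\ge 1}(\|d_x^k\|^2+\|d_y^k\|^2+\|\Delta_x^k\|^2+\|\Delta_y^k\|^2+\|r^{k+1}\|^2)$ is finite and Lemma~\ref{lemma:sequence} delivers both inequalities in \eqref{ineq:nec-r1}. The main obstacle will be the bookkeeping in the second step: one must carefully peel off the inexactness perturbations $x^{k+1}-\bar x^{k+1}$, $y^{k+1}-\bar y^{k+1}$ and the auxiliary correction $\alpha\G^{-1}d_y^{k-1}$, controlling each by a multiple of $\varepsilon_k$ (or $\varepsilon_{k-1}$), in order to pass from the summability facts used in the convergence proof — which are stated in terms of the \emph{exact} minimizers $\bar x^{k+1},\bar y^{k+1}$ — to the analogous bounds on the actual iterates $x^{k+1},y^{k+1}$ that appear in $D(w^{k+1})$.
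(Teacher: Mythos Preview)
Your proposal is correct and follows the same overall strategy as the paper: bound $D(w^{k+1})$ via the explicit subgradients in \eqref{opt:2sub}, show the five building blocks are square-summable, and conclude by Lemma~\ref{lemma:sequence}.

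The tactical difference lies in how the square-summability of $\|\Delta_x^k\|^2$, $\|\Delta_y^k\|^2$, $\|r^{k+1}\|^2$ is obtained. The paper returns to the fundamental inequality \eqref{ineq:mainb} in its \emph{unbarred} form, sums it directly, and must therefore control the cross terms $\inprod{d_x^k}{x_e^{k+1}}$, $\inprod{d_y^k}{y_e^{k+1}}$; this forces it to re-derive uniform bounds $\|x_e^{k+1}\|_\M\le\sqrt{2/\hat\alpha}\,\hat\varrho$ and $\|y_e^{k+1}\|_\N\le\hat\varrho$ (the content of \eqref{ineq:Ny}--\eqref{ineq:etak-etabar}). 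You instead reuse the \emph{barred} summability statement \eqref{ineq:summable} already established inside the proof of Theorem~\ref{theorem:convergence}, and then pass from $(\bar x^{k+1},\bar y^{k+1},\bar r^{k+1})$ to $(x^{k+1},y^{k+1},r^{k+1})$ via the $\varepsilon_k$-bounds of Proposition~\ref{prop:error}. Your route is more modular and avoids repeating the boundedness estimates, at the cost of keeping track of one extra layer of perturbations; the paper's route yields an explicit constant $\omega$ expressed entirely through $\phi_1(\bar w)$, $\E$, $\E'$, and operator norms, without invoking \eqref{ineq:summable}. Both are valid, and the bookkeeping you flag as the ``main obstacle'' goes through exactly as you outline.
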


\begin{proof}
For any $(x,y,z)\in\X\times\Y\times\Z$, define $x_{e}:=x-\bar x$, $y_{e}:=y-\bar y$ and $z_{e}=z-\bar z$. Moreover, we define the sequence $\{\zeta_{k}\}_{k\ge 1}$  by
$$
\zeta_{k}:=
\mbox{$\sum_{i=1}^{k}$} \big(
2\langle d_x^{i},x^{i+1}_{e}\rangle
+
2\langle d_y^{i},y^{i+1}_{e}\rangle
+
\alpha^{2}\|d_{y}^{i}-d_{y}^{i-1}\|^{2}_{\G^{-1}}\big).
$$
We first show that $\{\zeta_{k}\}$ is a bounded sequence.
Let $\{\xi^{k}\}$ be the sequence defined in \eqref{def:xi}.
Define $\hat\varrho:=\|\xi^{1}\|+\big(\varrho+ \norm{\G^{-\frac{1}{2}}\N^{\frac{1}{2}}}\big)\E$, where $\varrho$ is defined by  \eqref{def:varrho}.
From \eqref{def:xi} and \eqref{ineq:xibound}, we know that for any $i\ge 1$,
\[
\label{ineq:Ny}
\|x^{i+1}_{e}\|_{\hat\Sigma_{f}+\S}^{2}
+\|y^{i+1}_{e}\|_{\N}^{2}
+{\hat\alpha\sigma}\|r^{i+1}\|^{2} \;\leq \;
\norm{\xi^{i+1}}^2 \;
\le\;
\hat\varrho^{2},
\]
where $\hat\alpha$ is defined in \eqref{def:beta}.
We then obtain that
\[
\label{ineq:SAx}
\begin{array}{l}
\|x_{e}^{i+1}\|_{\sigma\A\A^{*}}^{2}
\le
2\sigma\|r^{i+1}\|^{2}+2\|y_{e}^{i+1}\|_{\sigma\B\B^{*}}^{2}
\le
\frac{2}{\hat\alpha}(\hat\alpha\sigma)\|r^{i+1}\|^{2}+2\|y^{i+1}_{e}\|^{2}_{\N}
.
\end{array}
\]
By using \eqref{ineq:Ny} and \eqref{ineq:SAx} together, we get
\[
\label{ineq:xe-square}
\begin{array}{ll}
\|x_{e}^{i+1}\|^{2}_{\M}
&\le
\|x^{i+1}_{e}\|_{\hat\Sigma_{f}+\S}^{2}
+\frac{2}{\hat\alpha}(\hat\alpha\sigma)\|r^{i+1}\|^{2}
+2\|y_{e}^{i+1}\|_{\N}^{2}
\;\le\;
{2\hat\varrho^{2}}/{\hat \alpha}.
\end{array}
\]
We can see from \eqref{ineq:Ny} that $\|y^{i+1}_{e}\|_{\N}
\le\hat\varrho$. This, together with \eqref{ineq:xe-square} and the fact that $0<\hat\alpha<1$, implies
\[
\label{ineq:ineq1}
|\langle d_x^{i},x^{i+1}_{e}\rangle
+
\langle d_y^{i},y^{i+1}_{e}\rangle|
\le
(\sqrt{2/\hat\alpha}+1)\hat\varrho
\varepsilon_{k}.
\]
Note that
$\norm{\G^{-\frac{1}{2}} d_y^{k}}\le\norm{\G^{-\frac{1}{2}}\N^{\frac{1}{2}}} \varepsilon_{k}$ and $0<\alpha\le1$. Thus, we have
\[
\label{ineq:ineq2}
\alpha^{2}\|\G^{-\frac{1}{2}} (d_y^i-d_y^{i-1})\|^{2}\le
 2
 \norm{\G^{-\frac{1}{2}}\N^{\frac{1}{2}}}^{2}
(\varepsilon_{i}^2+\varepsilon_{i-1}^{2}).
\]
Therefore, by combining \eqref{ineq:ineq1} and \eqref{ineq:ineq2}, we can get
\[
\label{ineq:etak-etabar}
\begin{array}{lll}
\zeta_{k}
& \le &
\sum_{i=1}^{\infty}\big(
2|\langle d_x^{i},x^{i+1}_{e}\rangle
+
\langle d_y^{i},y^{i+1}_{e}\rangle|
+
\alpha^{2}\|d_{y}^{i}-d_{y}^{i-1}\|^{2}_{\G^{-1}}\big)
\\[2mm]
&\le&
\bar\zeta \; :=\; 2\big(\sqrt{2/\hat\alpha}+1\big)
\hat\varrho\E+4\norm{\G^{-\frac{1}{2}}\N^{\frac{1}{2}}}^{2}\E',
\end{array}
\]
where $\E$ and $\E'$ are defined in the beginning of Sec. \ref{convergence}.
By using \eqref{ineq:mainb}, \eqref{eq:deltay} and \eqref{ineq:etak-etabar}, we have that
\[
\label{3summable}
\begin{array}{rl}
& \sum_{k=1}^\infty \norm{\Delta_x^{k}}^{2}_{\F}
+\beta\sigma\norm{r^{k+1}}^{2}
+\norm{\Delta_y^{k}-\alpha\G^{-1}(d^k_y-d^{k-1}_y)}_{\G}^{2}
\\[2mm]
& \leq
\sum_{k=1}^\infty
(2|\inprod{ d_{x}^{k}}{x^{k+1}_e}|
+2|\inprod{ d_{y}^{k}}{y^{k+1}_e}|
+ \alpha^2\norm{d_y^k-d_y^{k-1}}_{\G^{-1}}^2)
\\[2mm]
&\quad+
\sum_{k=1}^\infty (\phi_{k}(\bar w)
-\phi_{k+1}(\bar w))
\le
\phi_{1}(\bar w) + \bar{\zeta},
\end{array}
\]
where $\bar\zeta$ is defined in \eqref{ineq:etak-etabar}.
Also, since $0<\alpha<1$, we have that
$$
\|\Delta_y^{k}+\alpha\G^{-1}(d_y^k- d_y^{k-1})\|_{\G}^{2}
\ge
\|\Delta_y^{k}\|_{\G}^{2}
-2 \|\Delta_y^k\|\|d_y^k- d_y^{k-1}\|.
$$
By \eqref{ineq:Ny} we know that $\|y^{i+1}_{e}\|_{\N}
\le \hat\varrho$.
Hence, $
\|\Delta_y^{k}\|
\le
2\|\N^{-\frac{1}{2}}\|\hat\varrho$.
From the fact that
$\norm{d_y^{k}}\le\norm{\N^{\frac{1}{2}}} \varepsilon_{k}$, we can get
$
\| (d_y^k-d_y^{k-1})\|\le \norm{\N^{\frac{1}{2}}} (\varepsilon_{k}+\varepsilon_{k-1}).
$
Thus from \eqref{3summable} and the above discussions, we have that
$$
\begin{array}{l}
\sum_{k=1}^{\infty}
(\|\Delta_x^{k}\|^{2}_{\M}
+\beta\sigma\|r^{k+1}\|^{2}
+\|\Delta_y^{k}\|_{\N}^{2})
\\[2mm]
\le
\max\big\{1,\|\M\F^{-1}\|,\|\N\G^{-1}\|\big\}
\sum_{k=1}^{\infty}
\big(\|\Delta_x^{k}\|^{2}_{\F}
+\beta\sigma\|r^{k+1}\|^{2}
+\|\Delta_y^{k}\|_{\G}^{2}\big)
\\[2mm]
\le
\omega_{0}:=
\max\big\{1,\|\M\F^{-1}\|,\|\N\G^{-1}\|\big\}\big(\phi_1(\bar{w})+\bar{\zeta}+4
\norm{\N^{\frac{1}{2}}}
\norm{\N^{-\frac{1}{2}}}\hat\varrho\E\big).
\end{array}
$$
Next, we estimate the value of $D(w^{k+1})$.
From  the fact that $\M \succeq\P_{x}^{k}$ and \eqref{opt:2sub},  we obtain that
$$
\begin{array}{l}
\dist^{2}(0,F(w^{k+1}))
\\[1mm]
\le\|d_{x}^{k}
+(\hat\Sigma_{f}+\S-\P_{x}^{k+1})\Delta_x^{k}
+
(\tau-1)\sigma\A r^{k+1}
-\sigma\A\B^{*}\Delta_y^{k}\|^{2}
\\[1mm]
\le\| d_{x}^{k}
+(\M-\P_{x}^{k+1})\Delta_x^{k}
+ \sigma\A (\tau r^{k+1}- r^k) \|^2
\\[1mm]
\leq 3\norm{\M} \big(  \norm{\M^{-\frac{1}{2}}d^k_x}^2 + \norm{\Delta_x^k}_\M^2
+ \sigma^2  \norm{\M^{-\frac{1}{2}}\A}^2 \norm{\tau r^{k+1}-r^k}^2
\big).
\end{array}
$$
Also, from the fact that $\N\succeq\P_{y}^{k}$  and \eqref{opt:2sub}, we have
$$
\begin{array}{l}
\dist^2(0,G(w^{k+1}))
\\[1mm]
\le
\|d_{y}^k
+(\hat\Sigma_g+\T-\P_{y}^{k+1})\Delta_y^k
+(\tau-1)\sigma\B r^{k+1}\|^{2}
\\[1mm]
\le\|d_{y}^k
+(\N-\P_{y}^{k+1})\Delta_y^k
+ \sigma\B  (\tau r^{k+1} - r^k +\A^*\Delta_x^k)\|^{2}
\\[1mm]
\le3\norm{\N} \big(  \norm{\N^{-\frac{1}{2}}d^k_y}^2 + \norm{\Delta_y^k}_\N^2+ 2\sigma^2  \norm{\N^{-\frac{1}{2}}\B}^2 \norm{\tau r^{k+1}-r^k}^2\big)
\\[1mm]
\quad +6\sigma^2 \norm{\B\A^*\M^{-\frac{1}{2}}}^2\norm{\Delta_x^{k}}_\M^2.
\end{array}
$$
Define
$
\omega_1:=\frac{(1+\tau^2)\sigma}{\beta}(\norm{\M^{-\frac{1}{2}}\A}^2 +
\norm{\N^{-\frac{1}{2}}\B}^2)
$
and
$$
\omega_2:=\max\big\{
\|\M\|+2\sigma^2 \norm{\B\A^*\M^{-\frac{1}{2}}}^2,
\|\N\|,
4\omega_1\big\}.
$$
It is now easy to verify from the above discussions that
$$
\begin{array}{ll}
\sum_{k=1}^{\infty} D(w^{k+1})
\le
\omega:=
3(\|\M\|+\|\N\|)\E'+2\omega_1\|r^1\|+3\omega_0\omega_2.
\end{array}
$$
Therefore, by the above inequality and Lemma \ref{lemma:sequence}, we know that \eqref{ineq:nec-r1} holds with $\omega>0$ being defined above. This completes the proof.
\qed
\end{proof}
\begin{remark}
(a)
We note that  the sequence $\{D(w^k)\}$ is not necessarily monotonically decreasing,
especially due to the inexact setting of the imsPADMM. Thus it is not surprising
that the iteration complexity result is established with the ``$\min_{1\leq i\leq k}$" operation
in Theorem \ref{non-ergodic-theorem}.
\\
(b)
For a majorized ADMM with coupled objective,
the non-ergodic complexity analysis was first proposed by Cui et al. \cite{cui}.
For the classic ADMM with separable objective functions,   Davis and Yin \cite{davis-yin} provided non-ergodic iteration complexity results in terms of the primal feasibility and the objective functions. One may refer to \cite[Remark 4.3]{cui} for a discussion on this topic.
\end{remark}

\section{Numerical Experiments}
\label{numerical}

In this section, we report the numerical performance of the sGS-imsPADMM for the following rather general convex QSDP (including SDP) problems
\begin{equation}
\label{eq-qsdp}
\begin{array}{ll}
\min \big\{ \frac{1}{2} \inprod{X}{\cQ X} + \inprod{C}{X}\,
\mid \,
\cA_E X   =  b_E,
\; \cA_I X    \geq b_I, \;
X \in \Sn_+\cap \cN  \big\}
\end{array}
\end{equation}
where  $\cS_+^n$ is the cone of $n\times n$ symmetric  positive semidefinite matrices in the space of $n\times n$ symmetric matrices $\cS^n$, $\cQ: \S^n\to\S^n$ is a self-adjoint positive semidefinite linear operator,  $\cA_E:\Sn \rightarrow \Re^{m_E}$ and $\cA_I:\Sn\rightarrow \Re^{m_I}$ are linear maps,  $C\in \Sn$,  $b_E\in \Re^{m_E}$ and $b_I\in \Re^{m_I}$ are given data,  $\cN$ is a nonempty simple closed convex set,
e.g., $\cN =\{X\in\Sn\;|\; L\leq X\leq U\}$ with $L,U\in \Sn$ being given matrices.
The dual of problem  (\ref{eq-qsdp}) is given by
\begin{eqnarray}
\label{eq-d-qsdp}
\begin{array}{rllll}
-\min &
\delta_{\cN}^*(-Z)
+\frac{1}{2}\inprod{W}{\cQ W}
-\inprod{b_E}{y_E}
-\inprod{b_I}{y_I} \\[1mm]
\mbox{s.t.}
&Z-\cQ W+S+\cA_E^* y_E+\cA_I^*y_I = C,
\
S\in \Sn_+, \  y_I\geq 0, \ W \in \W,
\end{array}
\end{eqnarray}
where
$\W$ is any subspace in $\S^{n}$ containing ${\rm Range}(\Q)$.
Typically $\W$ is chosen to be either $\S^n$ or ${\rm Range}(\Q)$.
Here we fix $\W=\S^n$.
In order to handle the equality and inequality constraints in \eqref{eq-d-qsdp} simultaneously, we add a slack variable $v$ to get the following equivalent problem:
\begin{equation}
\begin{array}{rllll}
\max &  \disp \big(-\delta_{\cN}^*(-Z) -\delta_{\Re_{+}^{m_I}}(v)\big) - \frac{1}{2}\inprod{W}{\cQ W} - \delta_{\Sn_{+}}(S) + \inprod{b_E}{y_E} + \inprod{b_I}{y_I}   \\[2mm]
 \mbox{s.t.} & Z  - \cQ W + S + \cA_E^* y_E + \cA_I^*y_I = C, \quad \D(v -  y_I) = 0, \quad W \in \W,
   \end{array}
   \label{eq-qsdp-dual}
\end{equation}
where $\cD\in \Re^{m_I\times m_I}$ is  a positive definite diagonal matrix
introduced for the purpose of scaling the variables.
The convex QSDP problem \eqref{eq-qsdp} is solved via its dual  \eqref{eq-qsdp-dual}
and we use $X\in\S^{n}$ and $u\in\Re^{m_{I}}$ to denote the
Lagrange multipliers
corresponding to the two groups of equality constraints in \eqref{eq-qsdp-dual}, respectively.
Note that if $\Q$ is vacuous, then \eqref{eq-qsdp} reduces to the
following general linear SDP:
\[
\min \big\{\inprod{C}{X}  \ |\
\cA_E X   =  b_E,
\  \cA_I X    \geq b_I, \
X \in \Sn_+\cap \cN
\}.
\label{eq-dnsdp}
\]
Its dual can be written as in \eqref{eq-qsdp-dual} as follows:
\begin{eqnarray}
\begin{array}{rllll}
\min &  \disp \big(\delta_{\cN}^*(-Z)+ \delta_{\Re_{+}^{m_I}}(v)\big) + \delta_{\Sn_{+}}(S) - \inprod{b_E}{y_E} - \inprod{b_I}{y_I}  \\[1mm]
 \mbox{s.t.} & Z + S + \cA_E^* y_E + \cA_I^*y_I = C, \
 \D(v-y_{I})=0,
   \end{array}
   \label{sdp-dual-relax}
\end{eqnarray}
or equivalently,
\begin{eqnarray}
\begin{array}{rllll}
\min & \disp \big(\delta_{\cN}^*(-Z)+ \delta_{\Re_{+}^{m_I}}(y_{I})\big) + \delta_{\Sn_{+}}(S) - \inprod{b_E}{y_E} - \inprod{b_I}{y_I}\\[1mm]
 \mbox{s.t.} & Z + S + \cA_E^* y_E + \cA_I^*y_I = C.
   \end{array}
   \label{sdp-dual}
\end{eqnarray}
Denote  the normal cone of $\N$ at $X$ by $N_{\N}(X)$. The KKT system of problem \eqref{eq-qsdp} is given by
\[
\label{eq:op-M}
\left\{
\begin{array}{l}
\A_{E}^{*}y_{E}+\A_{I}^{*}y_{I}+S+Z-\Q W-C=0, \  \A_{E}X-b_{E}=0,
\\[1mm]
0\in N_{\N}(X)+Z,\
\Q X-\Q W=0, \
X\in\S^{n}_{+},\
S\in\S_{+}^{n},\
\langle X,S\rangle=0,
\\[1mm]
\A_{I}X-b_{I}\ge0,\ y_{I}\ge 0,\  \langle A_{I}x-b,y_{I}\rangle =0.
\end{array}
\right.
\]
Based on the optimality condition \eqref{eq:op-M},
we measure the accuracy of a computed solution $(X,Z,W,S,y_E,y_{I})$ for QSDP \eqref{eq-qsdp} and its dual \eqref{eq-d-qsdp} via
\begin{equation}
\label{stop:sqsdp}
\begin{array}{c}
\eta_{\textup{qsdp}} =
\max\{\eta_D,\eta_{X},\eta_Z,  \eta_P,\eta_{W}, \eta_{S},
\eta_{I}\},
\end{array}
\end{equation}
where
$$
\begin{array}{l}
\eta_{D}=\frac{\|\A_{E}^{*}y_{E}+\A_{I}^{*}y_{I}+S+Z-\Q W-C\|}{1+\|C\|},\
\eta_{X}=\frac{\|X-\Pi_{\N}(X)\|}{1+\|X\|},\
\eta_{Z}=\frac{\|X-\Pi_{\N}(X-Z)\|}{1+\|X\|+\|Z\|},
\\[2mm]
\eta_{P}=\frac{\|\A_{E}X-b_{E}\|}{1+\|b_{E}\|},\
\eta_{W}=\frac{\|\Q X-\Q W\|}{1+\|\Q\|},
\eta_{S}=\max\big\{\frac{\|X-\Pi_{S_{+}^{n}}(X)\|}{1+\|X\|},
\frac{|\langle X,S\rangle|}{1+\|X\|+\|S\|}\big\},
\\[2mm]
\eta_{I}=\max\big\{
\frac{\|\min(0,y_{I})\|}{1+\|y_{I}\|},
\frac{\|\min(0,\A_{I}X-b_{I})\|}{1+\|b_{I}\|},
\frac{|\langle \A_{I}X-b_{I},y_{I}\rangle|}{1+\|\A_{I}x-b_{I}\|+\|y_{I}\|}\big\}.
\end{array}
$$
In addition, we also measure the objective value and the duality gap:
\begin{equation}
\label{gap}
\begin{array}{rl}
{\rm Obj_{primal}}&:=\frac{1}{2}\langle X,\Q X\rangle+\langle C,X\rangle,\\[1mm]
{\rm Obj_{dual}}&:=-\delta_{\cN}^*(-Z)-\frac{1}{2}\inprod{W}{\cQ W} + \inprod{b_E}{y_E} + \inprod{b_I}{y_I},
\\[1mm]
\eta_{\rm gap}&:=\frac{\rm Obj_{primal}-Obj_{dual}}{1+|\rm Obj_{primal}|+|Obj_{dual}|}.
\end{array}
\end{equation}
The accuracy of a computed solution $(X,Z,S,y_E,y_{I})$ for the SDP \eqref{eq-dnsdp}
is measured by a  relative residual $\eta_{\rm sdp}$ similar to the one defined
in \eqref{stop:sqsdp} but with $\Q$ vacuous.

Before we report our numerical results, we first
present some numerical techniques needed
for the efficient  implementations of our algorithm.

\subsection{On Solving Subproblems Involving Large Linear Systems of Equations}
In the course of applying
ADMM-type  methods to solve
\eqref{eq-qsdp-dual}, we often have to solve a large linear system of equations.
For example, for the subproblem corresponding to the block $y_I$,
the following subproblem with/without semi-proximal term  has to be solved:
\[
\label{yI-sub-problem}
\begin{array}{c}
\min\big\{
-\inprod{b_I}{y_I} + \frac{\sigma}{2}\norm{ [\cA_I,-\D]^* y_I - r}^2 +
\frac{1}{2}\|y_I-y_I^{-}\|^{2}_{\T} \big\},
\end{array}
\]
where $\T$ is a self-adjoint positive semidefinite linear operator on $\Re^{m_I}$,  and
$r$ and $y_I^{-}$  are given data.
Note that solving \eqref{yI-sub-problem} is equivalent to  solving
\[
\label{q-equality}
  \big(\sigma (\cA_I\cA_I^*+\D^2)  + \T\big) y_I = \tilde{r} := b_I + \sigma(\cA_I,-\D)r + \T y_I^{-}.
\]
It is generally very  difficult to
compute the solution of \eqref{q-equality} exactly
for large scale problems if $\T$ is the zero operator, i.e., not adding a proximal term.
We now provide an approach
for choosing the proximal term $\T$, which is based on a
technique\footnote{ This technique is originally designed for choosing the preconditioner for the preconditioned conjugate gradient method when solving large-scale linear systems, but it can be directly applied to choosing the semi-proximal terms when solving subproblems in ALM-type or ADMM-type methods. One may refer to \cite[Section 4.1]{abcd} for the details.}
 proposed by Sun et al. \cite{abcd}, so that the problem can be efficiently
solved exactly  while ensuring that $\T$ is not too ``large".
Let $$\V:=\cA_I\cA_I^*+\D^2.$$
Suppose that $\V$ admits the  eigenvalue decomposition
$\V=\mbox{$\sum_{i=1}^{n}$} \lambda_{i}\P_{i}\P_{i}^{*}$,  with
$\lambda_{1}\ge\ldots\ge \lambda_{n}\ge 0$.
We can choose $\T$ by using the first $l$ largest  eigenvalues and the corresponding eigenvectors of $\V$.
By following the procedure provided in \cite{abcd} we have
\[
\label{T1}
\begin{array}{l}
\T= \sigma \mbox{$\sum_{i=l+1}^{n}$}
(\lambda_{l+1}-\lambda_{i})\P_{i}\P_{i}^{*}.
\end{array}
\]
Thus $\T$ is self-adjoint positive semidefinite.
Moreover, it is more likely that such a $\T$ is ``smaller'' than
the natural choice of
setting it to be $\sigma(\lambda_1 \I -\V)$.
Indeed we have observed in our numerical experiments that the latter choice
always leads to more iterations compared to the choice in \eqref{T1}.

To solve \eqref{q-equality}, we need to compute $(\sigma\V+\T)^{-1}$,
which can be obtained analytically as
$
(\sigma\V+\T)^{-1}
=(\sigma\lambda_{l+1})^{-1}\I+\sum_{i=1}^{l}(
(\sigma\lambda_{i})^{-1}-(\sigma\lambda_{l+1})^{-1})\P_i\P_i^{*}
$.
Thus, we only need to calculate the first few largest eigenvalues and the corresponding eigenvectors of $\V$ and this can be done  efficiently via variants of the Lanczos method.
Finally, we add that
when the problem \eqref{yI-sub-problem} is allowed to be solved inexactly, we can
set $\T=0$ in \eqref{yI-sub-problem}  and  solve the linear system
$\sigma \V =\tilde{r}$
by a preconditioned conjugate gradient (PCG) method. In this setting, $(\sigma\V+\T)^{-1}$ with $\T$ defined in \eqref{T1} can serve as an effective  preconditioner.

Note that we can apply similar techniques to solve large linear systems of equations arising from solving the subproblems corresponding to $W$ in
\eqref{eq-qsdp-dual}.

\subsection{
Numerical Results for Quadratic/Linear SDP Problems}

In our numerical experiments, we construct QSDP test instances
based on the doubly nonnegative SDP problems arising from relaxation of binary integer quadratic (BIQ) programming with a large number of inequality constraints that was introduced by Sun et al. \cite{sun13} for getting tighter bounds.
The problems that we actually solve have the following form:
$$
\begin{array}{rl}
\min & \frac{1}{2}\inprod{X}{\cQ X} + \frac{1}{2} \inprod{Q}{\overline{X}} + \inprod{c}{x}  \\[5pt]
   \mbox{s.t.}
       & \textup{diag}(\overline{X}) - x = 0, \
       X = \begin{pmatrix}
\overline{X} & x \\
x^T & 1 \\
\end{pmatrix}
\in \Sn_+, \
X\in \N:= \{X\in \Sn:\; X\ge 0\},\\
&x_i-\overline{X}_{ij}\geq 0,\,
 x_j-\overline{X}_{ij} \geq 0,\,
 \overline{X}_{ij} - x_i -x_j \geq -1,\,
\forall\, 1\leq i < j \leq n-1.
\end{array}
$$
For convenience, we call them {as} QSDP-BIQ problems. When $\Q$ is vacuous, we call the corresponding linear SDP problems as SDP-BIQ problems.
The test data for {$Q$ and $c$ are taken from the} Biq Mac Library maintained by Wiegele\footnote{\url{http://biqmac.uni-klu.ac.at/biqmaclib.html}}.

We tested one group of SDP-BIQ problems and three groups of QSDP-BIQ problems with each group consisting of $80$ instances with $n$ ranging from $151$ to $501$.
We  compare the performance of the sGS-imsPADMM to the directly extended multi-block sPADMM {with the aggressive step-length of 1.618}
on solving these SDP/QSDP-BIQ problems.
 We should mention that, although its convergence is not guaranteed, such a directly extended sPADMM is currently more or less the benchmark among first-order methods
targeting to solve multi-block linear and quadratic SDPs to modest accuracy. Note that for QSDP/SDP problems, the majorization step is not necessary, so we shall henceforth call the sGS-imsPADMM as sGS-isPADMM.
We have implemented our sGS-isPADMM  and the directly extended sPADMM in {\sc Matlab}.
All the $320$ problems are tested on a HP Elitedesk
with one Intel Core i7-4770S Processor (4 Cores, 8 Threads, 8M Cache, 3.1 to 3.9 GHz) and 8 GB RAM.
We solve the QSDP \eqref{eq-qsdp} and the SDP \eqref{eq-dnsdp} via their
duals \eqref{eq-qsdp-dual} and \eqref{sdp-dual-relax}, respectively,
 where we set
$\D:=\alpha\I$ with $\alpha=\sqrt{\|\A_{I}\|}/2$.
We adopt a similar strategy used in \cite{sun13,li14} to adjust the step-length $\tau$ \footnote{If {$\tau\in[\frac{1+\sqrt{5}}{2},\infty)$} but it holds that $\sum_{k=0}^{\infty}\|\Delta_y^{k}\|^{2}+\|r^{k+1}\|^{2}<\infty$, the imsPADMM (or the sGS-imsPADMM) is also convergent. By making minor modifications to the proofs in Sec.  \ref{convergence} and using the fact that $\sum_{k=0}^{\infty}\|d_{y}^{k}\|<\infty$, we can get this convergence result with ease. We {omit} the detailed proof to reduce the length of this paper and one may refer to \cite[Theorem 1]{admmnote} for such a result and its detailed proof for the sPADMM setting. During our numerical implementation we always use $\tau\in[1.618,1.95]$.}.
The sequence $\{\varepsilon_{k}\}_{k\ge 0}$  that we used in imspADMM is chosen such that $\varepsilon_{k}\le 1/k^{1.2}$. The maximum iteration number is set at $200,000$.

We compare sGS-isPADMM applied to \eqref{eq-qsdp-dual} with a 5-block directly extended sPADMM (called sPADMM5d) with step-length of $1.618$ applied on \eqref{eq-d-qsdp}, and compare sGS-isPADMM applied to \eqref{sdp-dual-relax} with a 4-block directly extended sPADMM (called sPADMM4d) provided by  \cite{sun13} with step-length\footnote{We did not test the sPADMM4d with $\tau=1$ as it has been verified in \cite{sun13} that it almost always takes $20\%$ to $50\%$ more time than the one with $\tau=1.618$.} of 1.618 on \eqref{sdp-dual}.
For the comparison between sPADMM4d and some other  ADMM-type methods
in solving linear SDP problems, one may refer to \cite{sun13} for the details.
For the
sGS-isPADMM applied to \eqref{eq-qsdp-dual}, the subproblems corresponding to the blocks $(Z,v)$ and $S$ can be
solved analytically by computing the projections onto $\N\times \Re^{m_I}_+$ and $\S^n_+$, respectively.
For the subproblems corresponding to {$y_E$}, we solve the
linear system of equations involving the coefficient matrix
$\cA_E \cA_E^*$ via its Cholesky factorization since this computation can be done
without incurring excessive cost and memory.
For
the subproblems corresponding to $y_{I}$ and $W$,
we need to solve very large scale linear systems of equations and they are
solved via a preconditioned conjugate gradient (PCG) method
with preconditioners that are described in the previous subsection.
In the implementation of the sGS-isPADMM, we have used
the strategy described in Remark \ref{remark4} (b)
to decide whether the quadratic subproblems in each of the forward GS sweeps
should be solved. In our numerical experiments, we have found that
very often, the quadratic subproblems in the forward sweep actually
need not be solved as the  solutions computed in the prior
backward sweep already are good approximate solutions
to those subproblems. Such a strategy, which is the consequence of the flexibility
allowed by the inexact minimization criteria in sGS-isPADMM, can offer significant computational
savings especially when the subproblems have to be solved by {a} Krylov iterative solver
such as the PCG method.  We note that in the event when a quadratic subproblem
in the forward or backward sweep has to be solved by a PCG method, the solution computed in the
prior sweep or cycle should be {used} to serve as a good initial starting point for the
PCG method.

For {the} sPADMM5d applied to {\eqref{eq-d-qsdp}, the subproblems involving the blocks $Z$, $S$ and {$y_E$} can be solved
just as in the case of the  sGS-isPADMM. For the subproblems corresponding to the nonsmooth block $y_I$, since these subproblems must be solved exactly, a proximal term whose Hessian is $\lambda_{\max}\I-\sigma\A_{I}\A_{I}^{*}$ (with $\lambda_{\max}$ being the largest eigenvalue of $\sigma\A_{I}\A_{I}^{*}$) has to be added to ensure that an exact solution can be computed efficiently.
Besides, we can also apply a directly extended 5-block sPADMM (we call it sPADMM5d-2 for convenience) on \eqref{eq-qsdp-dual}. In this case, we can use the proximal term described in \eqref{T1} in the previous subsection, where $l$ is typically chosen to be less than $10$.
We always choose a similar
proximal term when solving the subproblems corresponding to $W$ for both the sPADMM5d and the sPADMM5d-2. Since the performance of the sPADMM5d-2 applied to \eqref{eq-qsdp-dual} is very similar to that of the sPADMM5d  applied to \eqref{eq-d-qsdp}, we only report our numerical results for the latter.

{\begin{figure}
\includegraphics[width=0.45\textwidth]{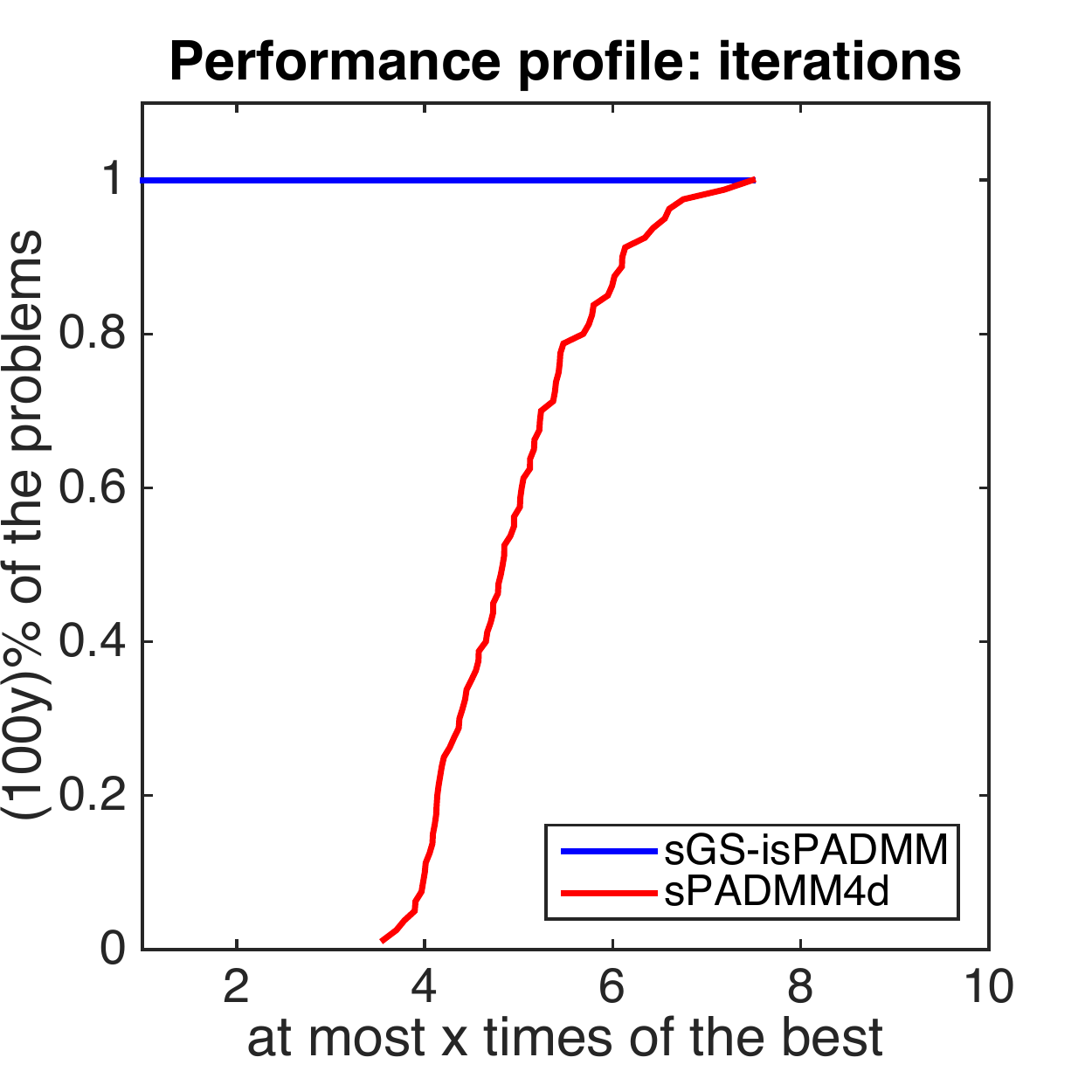}
\quad
\quad
\includegraphics[width=0.45\textwidth]{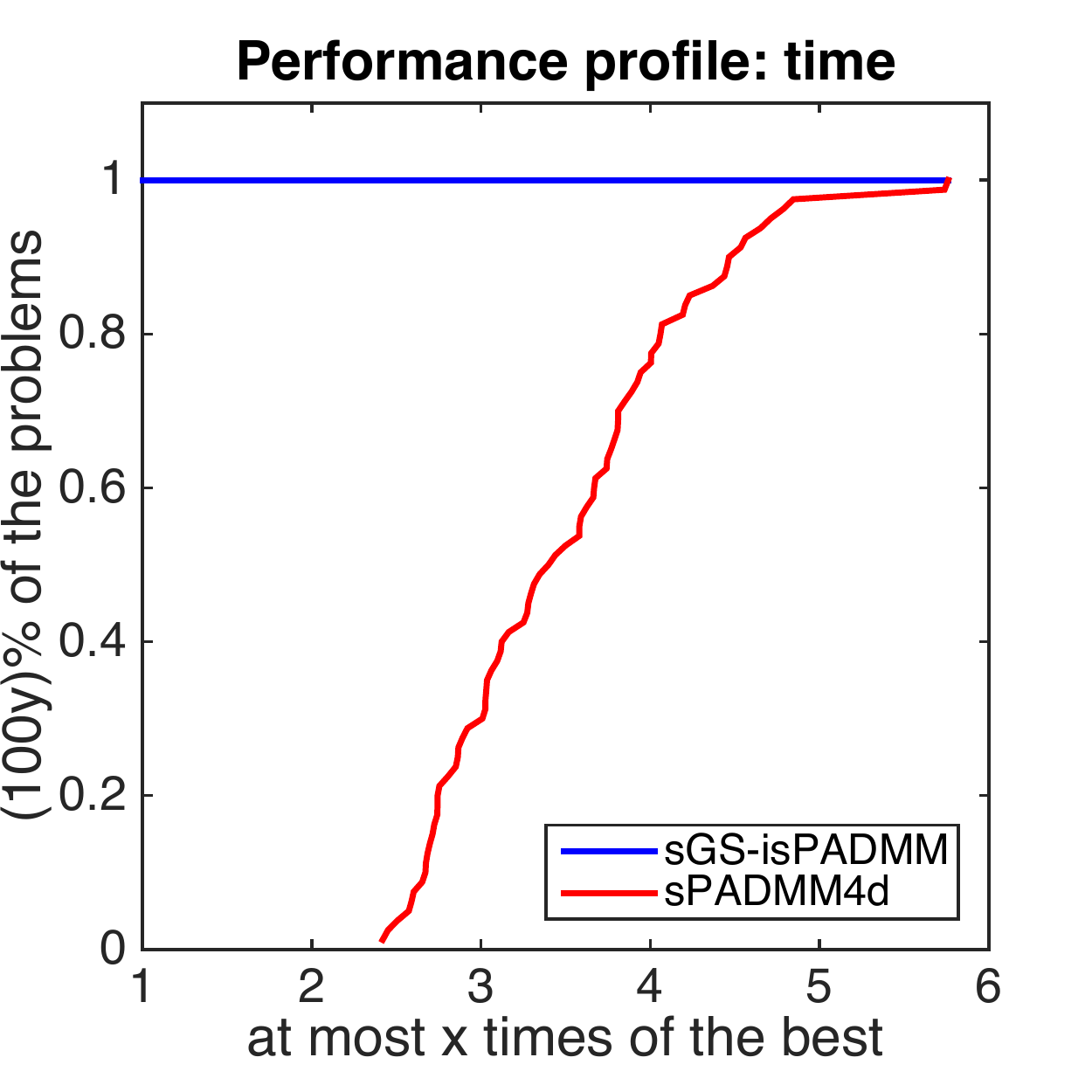}
\caption{Performance profiles of sGS-isPADMM and sPADMM4d
on solving SDP-BIQ problems. \label{figure-sdp}}
\

\includegraphics[width=0.45\textwidth]{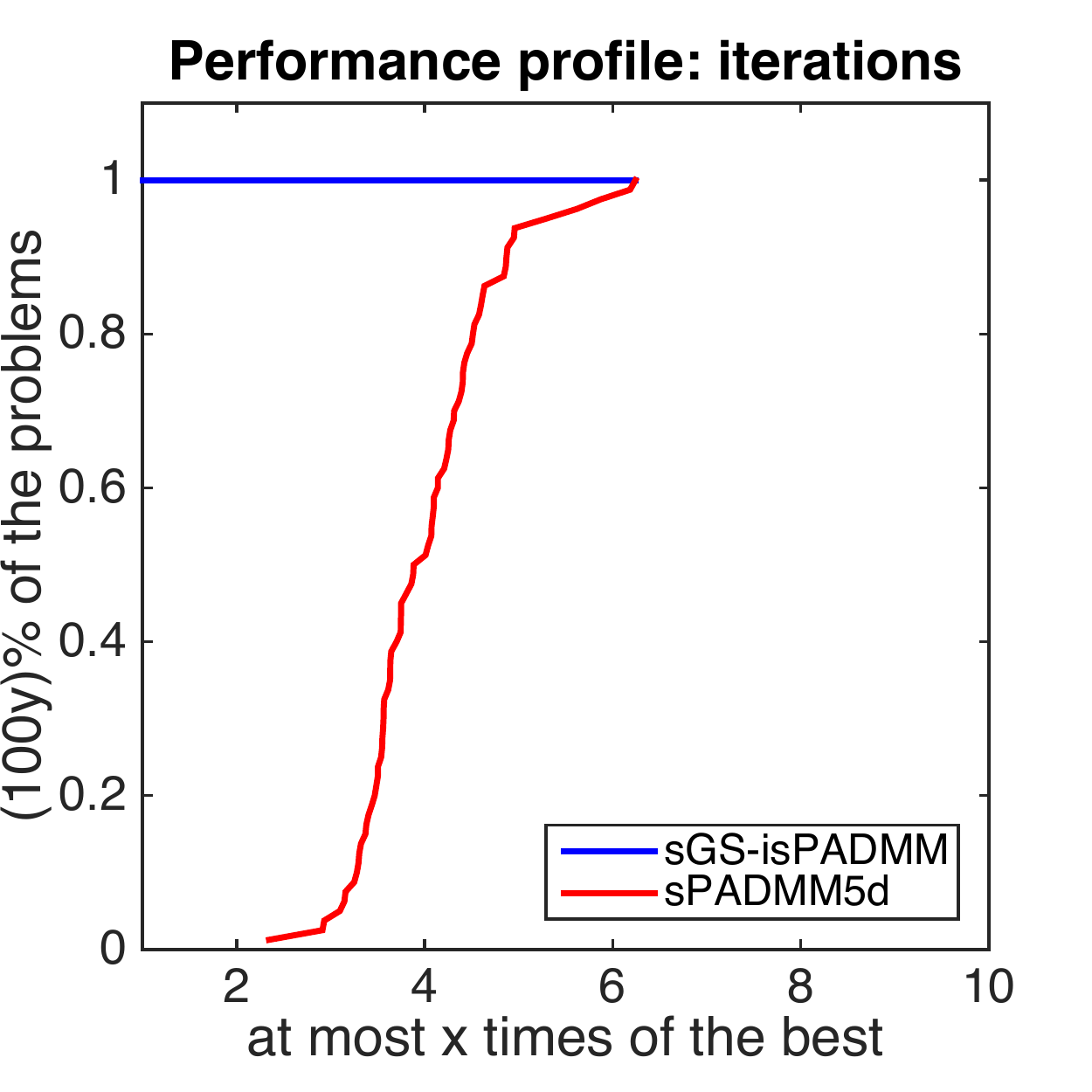}
\quad
\quad
\includegraphics[width=0.45\textwidth]{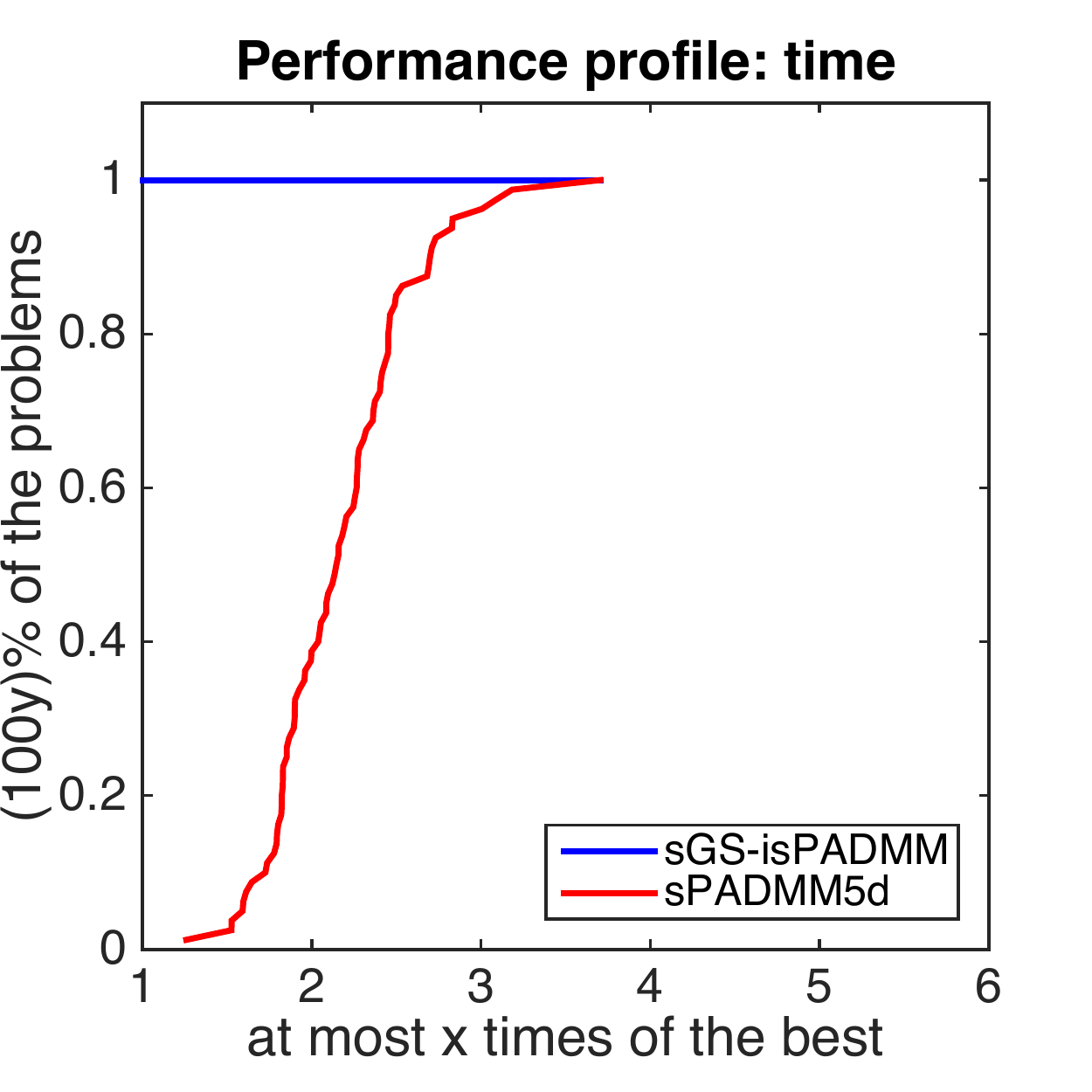}
\caption{Performance profiles of sGS-isPADMM and sPADMM5d
on solving QSDP-BIQ problems (group 1).\label{figure-qsdp1}}
\end{figure}

\begin{figure}
\includegraphics[width=0.45\textwidth]{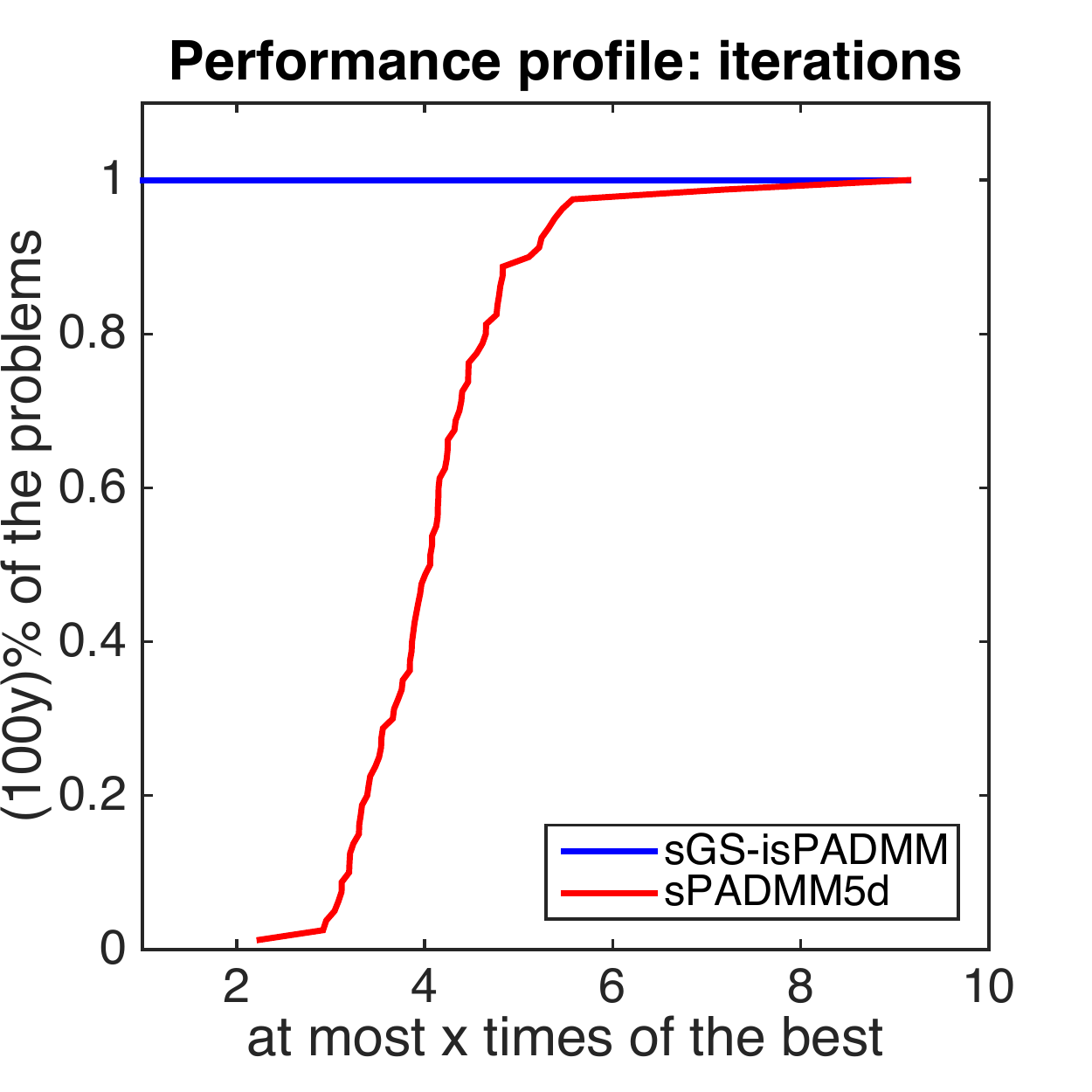}
\quad
\quad
\includegraphics[width=0.45\textwidth]{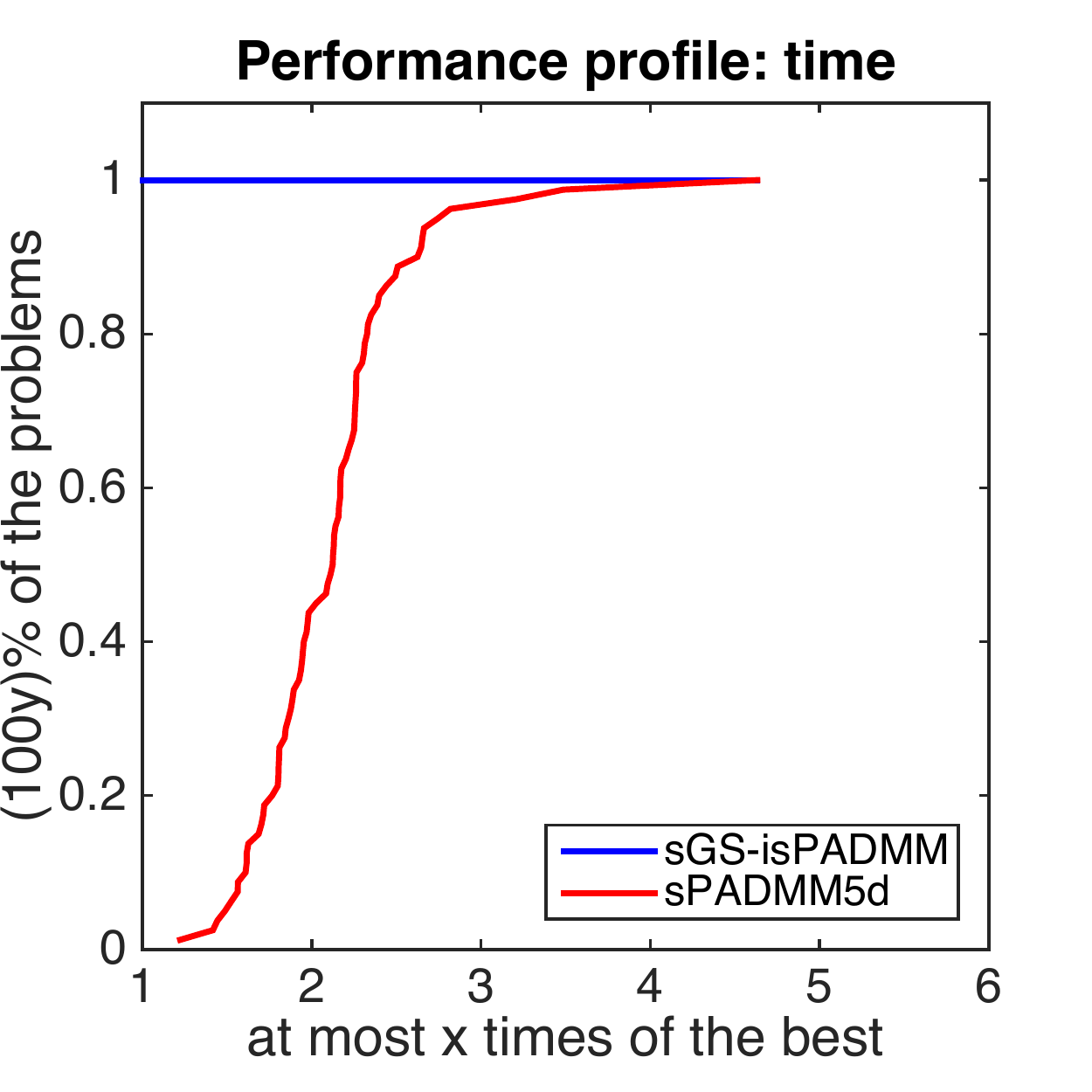}
\caption{\scriptsize{Performance profiles of sGS-isPADMM and sPADMM5d
on solving QSDP-BIQ problems (group 2).\label{figure-qsdp2}}}
\

\

\includegraphics[width=0.45\textwidth]{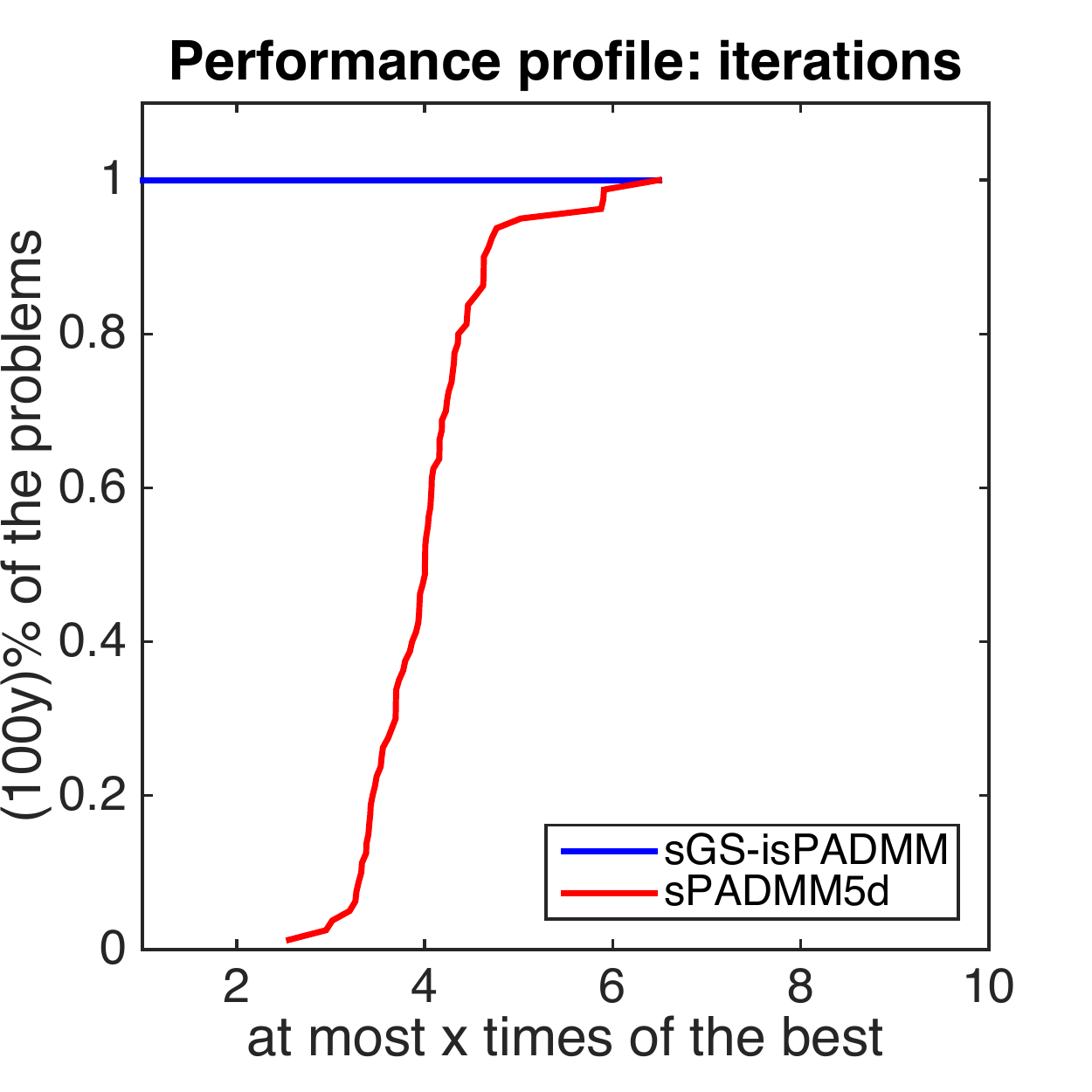}
\quad
\quad
\includegraphics[width=0.45\textwidth]{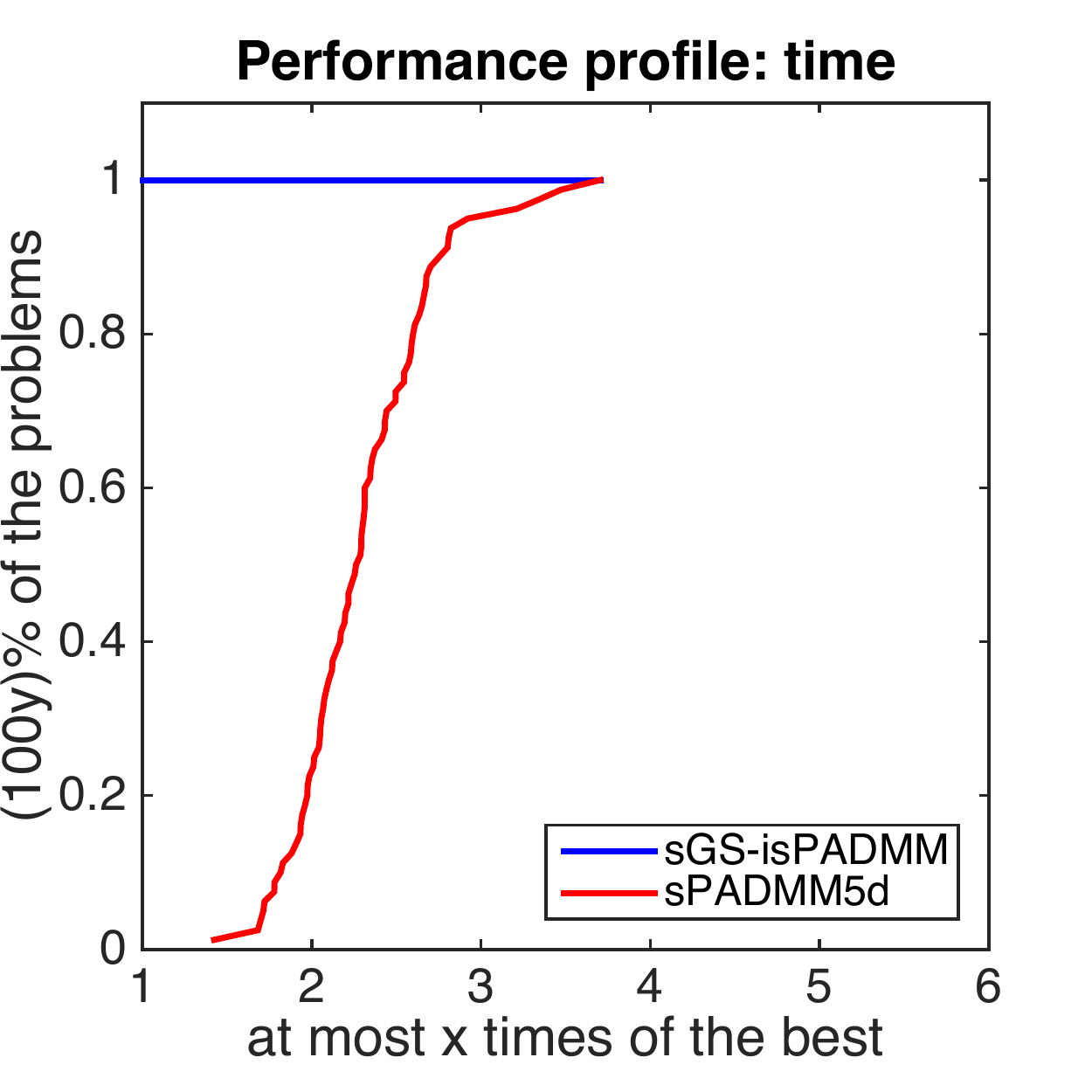}
\caption{\scriptsize{Performance profiles of sGS-isPADMM and sPADMM5d
on solving QSDP-BIQ problems (group 3).\label{figure-qsdp3}}}
\end{figure}

We now report our numerical results.
Figure \ref{figure-sdp} shows the numerical performance of the sGS-isPADMM and sPADMM4d
in solving SDP-BIQ problems to the accuracy of $10^{-6}$ in $\eta_{qsdp}$.
One can observe that sGS-isPADMM is $3$ to $5$ times faster than the sPADMM4d,  on approximately $80\%$ problems in terms of computational time.
Figure \ref{figure-qsdp1} shows the numerical performance of  the sGS-isPADMM and sPADMM5d in solving QSDP-BIQ problems (group 1) to the accuracy of $10^{-6}$ in $\eta_{qsdp}$.
For this group of tested instances, $\Q$ is chosen as the symmetrized Kronecker operator $\Q(X)=\frac{1}{2}(AXB+BXA)$, as was used in \cite{toh08}, with $A$, $B$ being randomly generated symmetric positive semidefinite matrices such that ${\rm rank}(A)=10$ and ${\rm rank}(B)\approx n/10$. Clearly $\Q$ is self-adjoint and positive semidefinite on $\S^{n}$ but highly rank deficient \cite[Appendix]{TTT98}.
As is showed in Figure \ref{figure-qsdp1}, the sGS-isPADMM is at least $2$ times faster than the sPADMM5d, on solving the vast majority the tested problems in terms of computational time.
Figure \ref{figure-qsdp2} demonstrates the numerical performance of the sGS-isPADMM and sPADMM5d in solving QSDP-BIQ problems (group 2) to the accuracy of $10^{-6}$ in $\eta_{qsdp}$.
Here, $\Q$ is chosen as the symmetrized Kronecker operator $\Q(X)=\frac{1}{2}(AXB+BXA)$ with $A$, $B$ being matrices truncated from two different large correlation matrices ({\tt Russell 1000 {\rm and} Russell 2000}) fetched from Yahoo Finance by {\sc Matlab}.
As can be seen from Figure \ref{figure-qsdp2},  sGS-isPADMM is $2$ to $3$ times faster than the sPADMM5d  for about $80\%$ of the problems in terms of computational time.
Figure \ref{figure-qsdp3} shows the numerical performance of the sGS-isPADMM and sPADMM5d
in solving QSDP-BIQ problems (group 3) to the accuracy of $10^{-6}$ in $\eta_{qsdp}$.
Here, $\Q$ is chosen as the Lyapunov operator $\Q(X)=\frac{1}{2}(AX+XA)$ with $A$ being a randomly generated symmetric positive semidefinite matrix such that ${\rm rank}(A)\approx n/10$.
We should note that for these instances, the quadratic subproblems
corresponding to $W$ in both the sGS-isPADMM and ADMM5d can admit closed-form solutions by using the eigenvalue decomposition of $A$
and adapting the technique in \cite[Section 5]{Todd99} to compute
them.
In our numerical test, we compute the closed-form solutions for these subproblems.
As can be seen from Figure \ref{figure-qsdp3},  the sGS-isPADMM is $2$ to $4$ times faster than the sPADMM5d  for more than $90\%$ tested instances  in terms of computational time.
Table \ref{table:qsdp} gives the detailed computational results for selected tested instances with $n=501$.
The full table for all the 320 problems is available at the authors' website\footnote{
\url{http://www.math.nus.edu.sg/~mattohkc/publist.html}}.

To summarize, we have seen that our sGS-isPADMM is typically 2 to 3 times faster
than the directly extended multi-block sPADMM even with the aggressive step-length of
$1.618$. We achieve this by exploiting the flexibility allowed by our
proposed method for only requiring approximate solutions to the subproblems
in each iteration. In contrast, the directly extended sPADMM must
solve the subproblems exactly, and hence is forced to add
appropriate proximal terms which may slow down the convergence. Indeed, we observed that its convergence is often much slower than that of
the sGS-isPADMM.
The merit that is brought about by solving the original subproblems inexactly without adding proximal terms  is thus evidently clear.

\begin{table}
\begin{scriptsize}
\begin{center}
\caption{
\label{table:qsdp}
The numerical performance of sGS-isPADMM and the directly extended multi-block ADMM with step-length $\tau=1.618$ on $1$ group of SDP-BIQ problems and $3$ groups of QSDP-BIQ problems with $n>500$ (accuracy $=10^{-6}$)}
\begin{tabular}{l c c l l l l}

\hline
\mc{1}{l}{Problem}
&\mc{1}{l}{$m_E;m_{I}$}
&\mc{1}{l}{$n_s$}
&\mc{1}{l}{Iteration}
&\mc{1}{l}{$\eta_{qsdp}$}
&\mc{1}{l}{$\eta_{gap}$}
&\mc{1}{l}{Time}
\\[2pt]
\mc{1}{l}{}
&\mc{2}{l}{}
&\mc{1}{l}{sGS-isP$|$sP-d}
&\mc{1}{l}{sGS-isP$|$sP-d}
&\mc{1}{l}{sGS-isP$|$sP-d}
&\mc{1}{l}{sGS-isP$|$sP-d}
\\
\hline

\hline
\mc{7}{l}{SDP-BIQ}\\[1pt]
bqp500-2 &501$;$374250&501 &17525$|$82401
&9.9-7$|$9.9-7
&-6.3-7$|$2.3-8
&42:27$|$2:12:29
\\[1pt]   
bqp500-4 &501$;$374250&501 &15352$|$75995
&9.9-7$|$9.9-7
&-6.4-7$|$-3.2-8
&36:53$|$1:59:52
\\[1pt]   
bqp500-6 &501$;$374250&501 &17747$|$78119
&9.9-7$|$9.9-7
&-1.6-7$|$-2.4-8
&45:10$|$2:04:23
\\[1pt]   
bqp500-8 &501$;$374250&501 &20386$|$110825
&9.9-7$|$9.9-7
&-4.3-7$|$2.1-8
&52:04$|$3:10:43
\\[1pt]   
bqp500-10 &501$;$374250&501 &16407$|$68985
&9.7-7$|$9.9-7
&-5.6-7$|$3.7-9
&39:30$|$1:46:01
\\[1pt]   
gka1f &501$;$374250&501 &9101$|$60073
&9.9-7$|$9.9-7
&-4.4-7$|$1.1-8
&20:22$|$1:32:22
\\[1pt]   
gka2f &501$;$374250&501 &16193$|$74034
&9.9-7$|$9.9-7
&-2.7-7$|$-1.1-8
&39:35$|$1:59:59
\\[1pt]   
gka3f &501$;$374250&501 &16323$|$72563
&9.9-7$|$9.9-7
&-1.3-7$|$3.9-8
&40:38$|$1:56:28
\\[1pt]   
gka4f &501$;$374250&501 &15502$|$63285
&9.6-7$|$9.9-7
&-6.1-7$|$3.4-8
&36:58$|$1:41:20
\\[1pt]   
gka5f &501$;$374250&501 &17664$|$76164
&9.9-7$|$9.9-7
&-1.3-7$|$1.1-8
&43:45$|$2:05:14
\\[1pt]

\hline

\mc{7}{l}{QSDP-BIQ (group 1)} \\[1pt]
bqp500-2 &501$;$374250&501 &19053$|$71380
&9.9-7$|$9.9-7
&-1.2-7$|$1.1-8
& 1:02:31$|$1:52:02
\\[1pt]    
bqp500-4 &501$;$374250&501 &13905$|$67865
&9.9-7$|$9.9-7
&-8.9-7$|$7.8-8
&43:17$|$1:46:07
\\[1pt]    
bqp500-6 &501$;$374250&501 &17211$|$62562
&9.9-7$|$9.9-7
&-2.0-7$|$6.9-8
&56:23$|$1:37:19
\\[1pt]    
bqp500-8 &501$;$374250&501 &19742$|$85057
&9.9-7$|$9.9-7
&-4.9-7$|$7.0-8
& 1:05:09$|$2:15:52
\\[1pt]    
bqp500-10 &501$;$374250&501 &17690$|$65484
&9.9-7$|$9.9-7
&-2.3-7$|$6.7-8
&58:00$|$1:43:04
\\[1pt]    
gka1f &501$;$374250&501 &8919$|$55669
&9.9-7$|$9.9-7
&-8.8-7$|$4.1-8
&26:42$|$1:25:01
\\[1pt]    
gka2f &501$;$374250&501 &13587$|$61324
&9.9-7$|$9.9-7
&-4.5-7$|$2.1-8
&42:50$|$1:37:15
\\[1pt]    
gka3f &501$;$374250&501 &13786$|$62438
&9.9-7$|$9.9-7
&-2.2-7$|$3.1-8
&42:55$|$1:37:29
\\[1pt]    
gka4f &501$;$374250&501 &13953$|$57164
&9.6-7$|$9.9-7
&-7.2-7$|$-3.4-8
&44:25$|$1:31:14
\\[1pt]    
gka5f &501$;$374250&501 &15968$|$62001
&9.9-7$|$9.9-7
&-1.4-7$|$4.6-8
&50:22$|$1:35:40
\\[1pt]

\hline

\mc{7}{l}{QSDP-BIQ (group 2)}\\[1pt]
bqp500-2 &501$;$374250&501 &16506$|$79086
&9.9-7$|$9.9-7
&-1.2-7$|$4.2-8
&52:46$|$1:52:08
\\[1pt]   
bqp500-4 &501$;$374250&501 &8675$|$30677
&9.9-7$|$9.9-7
&2.7-8$|$2.3-8
&25:32$|$41:15
\\[1pt]   
bqp500-6 &501$;$374250&501 &10043$|$42654
&9.9-7$|$9.9-7
&-3.0-8$|$8.3-8
&29:46$|$58:58
\\[1pt]   
bqp500-8 &501$;$374250&501 &9410$|$43785
&9.9-7$|$9.9-7
&-2.5-8$|$2.9-8
&27:37$|$59:05
\\[1pt]   
bqp500-10 &501$;$374250&501 &10656$|$35213
&9.9-7$|$9.9-7
&-3.6-8$|$8.8-8
&32:35$|$47:00
\\[1pt]   
gka1f &501$;$374250&501 &10939$|$52226
&9.9-7$|$9.9-7
&-5.8-8$|$3.8-8
&36:10$|$1:16:48
\\[1pt]   
gka2f &501$;$374250&501 &7757$|$34660
&9.9-7$|$9.9-7
&-1.8-8$|$6.0-8
&25:17$|$48:40
\\[1pt]   
gka3f &501$;$374250&501 &11241$|$45857
&9.9-7$|$9.9-7
&-1.2-8$|$2.7-8
&34:55$|$1:02:59
\\[1pt]   
gka4f &501$;$374250&501 &11706$|$37466
&9.9-7$|$9.9-7
&-3.7-8$|$6.4-8
&36:19$|$51:25
\\[1pt]   
gka5f &501$;$374250&501 &14229$|$48670
&9.9-7$|$9.9-7
&-4.8-8$|$9.8-8
&42:37$|$1:06:37
\\[1pt]

\hline
\mc{7}{l}{QSDP-BIQ (group 3)}\\[1pt]
  
bqp500-2 &501$;$374250&501 &18311$|$66867
&9.9-7$|$9.9-7
&-1.9-7$|$1.2-7
&41:33$|$1:11:30
\\[1pt]   
 
 
bqp500-4 &501$;$374250&501 &14169$|$65580
&9.9-7$|$9.9-7
&-7.8-7$|$1.1-7
&30:04$|$1:10:29
\\[1pt] 
   
 
bqp500-6 &501$;$374250&501 &16428$|$68301
&9.9-7$|$9.9-7
&-2.3-7$|$8.4-8
&36:25$|$1:13:20
\\[1pt]  
  
   
bqp500-8 &501$;$374250&501 &26308$|$107664
&9.9-7$|$9.9-7
&-4.0-7$|$9.5-9
& 1:01:17$|$2:00:06
\\[1pt]  
  
  
bqp500-10 &501$;$374250&501 &16398$|$57221
&9.9-7$|$9.9-7
&-2.8-7$|$8.6-8
&37:22$|$1:06:27
\\[1pt]  
  
gka1f &501$;$374250&501 &14479$|$51294
&9.9-7$|$9.9-7
&-3.6-7$|$7.0-8
&31:05$|$59:17
\\[1pt]  
  
gka2f &501$;$374250&501 &9365$|$60799
&9.9-7$|$9.9-7
&-1.5-6$|$-1.9-9
&18:30$|$1:04:14
\\[1pt]  
  
gka3f &501$;$374250&501 &14175$|$57782
&9.9-7$|$9.9-7
&-3.2-7$|$2.0-8
&30:10$|$1:01:35
\\[1pt]    

gka4f &501$;$374250&501 &13356$|$56588
&9.8-7$|$9.9-7
&-5.8-7$|$-2.0-8
&27:42$|$1:00:10
\\[1pt]  
  
gka5f &501$;$374250&501 &14122$|$58716
&9.9-7$|$9.9-7
&-1.4-7$|$9.3-8
&29:38$|$1:01:13
\\[1pt]

\hline
\end{tabular}
\end{center}
\end{scriptsize}
In the table, ``sGS-isP'' stands for the sGS-isPADMM and ``sP-d'' stands for the sPADMM-4d and sPADMM-5d collectively.
The computation time is in the format of ``hours:minutes:seconds''
\end{table}

\section{Conclusions}
\label{conclusion}

In this paper, by combining  an inexact 2-block majorized sPADMM
and the recent advances in the  inexact symmetric Gauss-Seidel (sGS) technique
 for solving a multi-block convex composite quadratic programming whose objective
contains a nonsmooth term involving only the first block-variable, we have  proposed an inexact multi-block ADMM-type  method (called the sGS-imsPADMM) for solving
general high-dimensional convex composite conic optimization problems to moderate accuracy. One
of the most attractive features of  our proposed method is that it only needs one cycle of
the inexact sGS method, instead of an unknown number of cycles, to solve each of the subproblems involved.
 Our preliminary numerical results for solving $320$ high-dimensional linear and convex quadratic SDP problems with bound constraints, as well as with
a large number of
 linear equality and inequality constraints  have shown that for the vast majority of the tested problems,  the proposed the sGS-imsPADMM is 2 to 3 times faster than the directly extended multi-block PADMM (with no convergence guarantee) even with the aggressive step-length of $1.618$. This is a striking surprise given the fact that although the latter's convergence  is not guaranteed, it is currently the benchmark among first-order methods targeting to solve multi-block linear and quadratic SDPs to modest accuracy.
Our results clearly demonstrate that one does not need to sacrifice speed in exchange for convergence guarantee in developing ADMM-type first order methods, at least for solving high-dimensional linear and convex quadratic SDP problems to moderate accuracy. As mentioned in the Introduction, our goal of designing  the sGS-imsPADMM is to  obtain a
good initial point to warm-start the augmented Lagrangian method
so as to quickly benefit from its fast local  linear convergence.
So the  next important step is to  see how the  sGS-imsPADMM  can be exploited to produce efficient solvers for solving high-dimensional convex composite conic optimization problems to high accuracy. We leave this as our future research topic.

\begin{acknowledgements}
The authors would like to thank Xudong Li and Liuqin Yang  at National University of Singapore for suggestions on the numerical implementations of the algorithms described in this paper, and Caihua Chen at Nanjing University and Ying Cui at National University of Singapore for discussions on the non-ergodic iteration complexity of first-order methods.
\end{acknowledgements}

\end{document}